\renewcommand\labelenumi{(\roman{enumi})}
\renewcommand\theenumi\labelenumi
\newtheorem{lem}{Lemma}
\newtheorem{prop}{Proposition}
\newtheorem{thm}{Theorem}
\newtheorem{cor}{Corollary}
\theoremstyle{definition}
\newtheorem{defn}{Definition}
\newtheorem{rem}{Remark}
\newtheorem*{problem}{Problem}
\newcounter{numl}
\newcommand{\labelnuml}{\textup{(\roman{numl})}}
\newenvironment{numlist}{\begin{list}{\labelnuml}%
{\usecounter{numl}\setlength{\leftmargin}{0pt}%
\setlength{\itemindent}{2\parindent}%
\setlength{\itemsep}{\smallskipamount}\def
\makelabel ##1{\hss \llap {\upshape ##1}}}}{\end{list}}
\DeclareSymbolFont{script}{U}{eus}{m}{n}
\DeclareSymbolFontAlphabet{\mathscr}{script}
\DeclareMathSymbol{\Wedge}{0}{script}{"5E}
\DeclareMathAlphabet{\mathrmsl}{OT1}{cmr}{m}{sl}
\newcommand{\R}{{\mathbb R}}
\newcommand{\C}{{\mathbb C}}
\newcommand{\T}{{\mathbb T}}
\newcommand{\cO}{{\mathcal O}}
\newcommand{\Id}{\mathit{Id}}
\newcommand{\sub}{\subseteq}
\newcommand{\grad}{\mathrm{grad}}
\newcommand{\leqnos}{\tagsleft@true\let\veqno\@@leqno}
\newcommand{\reqnos}{\tagsleft@false\let\veqno\@@eqno}
\let\@@citation@@=\citation
\renewcommand{\citation}[1]{\@@citation@@{#1}%
\@for\@tempa:=#1\do{\@ifundefined{cit@\@tempa}%
  {\global\@namedef{cit@\@tempa}{}}{}}%
}
\def\@lbibitem[#1]#2#3\par{%
  \@ifundefined{cit@#2}{}{\item[\@biblabel{#1}\hfill]}%
  \if@filesw
      {\let\protect\noexpand
       \immediate
       \write\@auxout{\string\bibcite{#2}{#1}}}\fi\ignorespaces
  \@ifundefined{cit@#2}{}{#3}}
\def\@bibitem#1#2\par{%
  \@ifundefined{cit@#1}{}{\item}%
  \if@filesw \immediate\write\@auxout
    {\string\bibcite{#1}{\the\value{\@listctr}}}\fi\ignorespaces
  \@ifundefined{cit@#1}{}{#2}}
\begin{document}

\author[A. Lahdili]{Abdellah Lahdili}
\address{Lahdili Abdellah \\ D{\'e}partement de Math{\'e}matiques\\
UQAM\\ C.P. 8888 \\ Succursale Centre-ville \\ Montr{\'e}al (Qu{\'e}bec) \\
H3C 3P8 \\ Canada}
\email{lahdili.abdellah@courrier.uqam.ca}

\title[]{Automorphisms and deformations of conformally K\"ahler, Einstein--Maxwell metrics}

\begin{abstract} We obtain a structure theorem for the group of holomorphic automorphisms of a conformally K\"ahler, Einstein--Maxwell metric, extending the classical results of Matsushima~\cite{M}, Licherowicz~\cite{L} and Calabi~\cite{calabi} in the K\"ahler--Einstein, cscK, and extremal K\"ahler cases. Combined with previous results of LeBrun~\cite{LeB1}, Apostolov--Maschler~\cite{AM} and Futaki--Ono~\cite{FO}, this completes the classification of the conformally K\"ahler, Einstein--Maxwell metrics on $\mathbb{{CP}}^1 \times \mathbb{{CP}}^1$. We also use our result in order to introduce a (relative) Mabuchi energy in the more general context of $(K, q, a)$-extremal K\"ahler metrics in a given K\"ahler class, and show that the existence of $(K, q, a)$-extremal K\"ahler metrics is stable under small deformation of the K\"ahler class, the Killing vector field $K$ and the normalization constant $a$.
\end{abstract}

\maketitle

\section{Introduction}  Let $(M, J)$ be a compact K\"ahler manifold of complex dimension $m$ and $g$ a $J$-compatible K\"ahler metric. Following \cite{AM}, the Hermitian metric ${\tilde g}= \frac{1}{f^2} g$ is said to be {\it conformally K\"ahler, Einstein--Maxwell} if $\tilde g$ has
\begin{enumerate}
\item[(a)] 
$J$-invariant Ricci tensor, i.e. 
\begin{equation}\label{Ric J-inv}
{\rm Ric}_{\tilde g}(\cdot, \cdot ) = {\rm Ric}_{\tilde g}(J\cdot, J\cdot),
\end{equation}
\item[(b)] constant scalar curvature, i.e. 
\begin{equation}
{\rm Scal}_{\tilde g}=const.
\end{equation}
\end{enumerate}
These conditions extend to higher dimensions a $4$-dimensional riemannian signature analogue of the Einstein--Maxwell equations in General Relativity, see~\cite{LeB0, ambitoric1}.  

In \cite{AM}, Apostolov--Maschler initiated a study of conformally K\"ahler, Einstein--Maxwell K\"ahler metrics in a framework similar to the famous Calabi problem~\cite{calabi} of finding extremal K\"ahler metrics in a given K\"ahler class, and set the existence problem of the conformally K\"ahler, Einstein--Maxwell K\"ahler metrics in a formal GIT picture, extending the work of Donaldson and Fujiki~\cite{donaldson,fujiki} characterizing the Calabi extremal metrics as critical points of the norm of the corresponding  moment map. In particular, fixing a K\"ahler class $\Omega$ on $(M, J)$, a quasi-periodic real holomorphic vector field $K$ with zeroes, and a real positive constant $a>0$, it was shown in \cite{AM} that there is a natural obstruction to the existence of conformally K\"ahler, Einstein--Maxwell K\"ahler metrics associated to the above data, similar to the Futaki invariant~\cite{Futaki1,Futaki2} in the K\"ahler--Einstein and the constant scalar curvature K\"ahler (cscK) cases. More recently, Futaki--Ono~\cite{FO} have characterized the latter obstruction in terms of a {\it volume-minimizing} condition on $K$, reminiscent to the constant scalar curvature Sasaki case~\cite{MSY,FOW,legendre}.

\smallskip 
The purpose of this paper is to extend two fundamental results in the theory of extremal K\"ahler metrics to a more general context relevant the conformally K\"ahler, Einstein--Maxwell metrics described above.  
The first result is a suitable extension of Calabi's Theorem~\cite{calabi} on the structure of the group of holomorphic automorhisms of a compact extremal K\"ahler manifold. To state it, let  $g$ be a K\"ahler metric on $(M, J)$ endowed with a Killing vector field $K$ with zeroes. Hodge theory implies (see e.g. ~\cite{Gauduchon-book}) that $K$ is hamiltonian with respect to the K\"ahler form $\omega = gJ$, i.e.  $\imath_{K} \omega = - df$ for a smooth function on $M$, called a {\it Killing potential} of $K$. We normalize $f=f_{(K,\omega,a)}$ by requiring $\int_M f_{(K,\omega,a)} dv_{g} = a >0,$ where the positive real constant $a$ is such that $f_{(K,\omega,a)} >0$ on $M$. Then, for any fixed real number $q$ we define the {\it $(K, q, a)$-scalar curvature} of $g$ to be
\begin{equation}\label{(K,nu,a)-scalar-curvature}
{\rm S}_{(K, q, a)}(g):= f^2_{(K,\omega,a)} {\rm Scal}_g  + 2q f_{(K, \omega, a)} \Delta_g \left(f_{(K,\omega, a)}\right) - q(q-1)|K|^2_g,
\end{equation}
where ${\rm Scal}_g$ denotes the usual scalar curvature, $|\cdot|_g$ is the tensor norm induced by $g$, and $\Delta_g$ stands for the riemannian Laplacian on functions.\\

\smallskip

The point of this definition is that the condition \eqref{Ric J-inv} above yields that the conformal factor $f$ is a positive Killing potential of a Killing vector field $K$ for $g$, whereas the scalar curvature ${\rm Scal}_{\tilde g}$ of $\tilde g = \frac{1}{f^2} g$ is given by the formula \eqref{(K,nu,a)-scalar-curvature} with $q= -(2m-1)$. Other choices of the {\it weight} $q$ lead to other interesting geometric problems, as it was observed in \cite{ACGL}. We also notice that the GIT framework of \cite{AM} makes sense for any choice of the weight $q$ as above, see Section~\ref{s1} below.

\begin{defn}\label{d:(K,nu.a)-extremal} Let $g$ be a K\"ahler metric on $(M, J)$ endowed with a Killing vector field $K$ as above, and  $a>0$ a real constant such the corresponding Killing potential $f_{(K,\omega,a)}>0$ on $M$. We say that $g$ is $(K, q, a)$-extremal if its $(K, q, a)$-scalar curvature given by \eqref{(K,nu,a)-scalar-curvature} is a Killing potential, i.e. $\Xi=J {\rm grad}_g({\rm Scal}_{(K, q, a)}(g))$ is a Killing vector field for $g$.
\end{defn}

The definition above incorporates the case when ${\rm S}_{(K, q, a)}(g)$ is constant, which in turn links to the initial motivation of studying conformally K\"ahler, Einstein--Maxwell metrics. We denote by $\mathfrak{h}^{K}$ (resp. $\mathfrak{k}^{K}$) the centralizer of $K$ in the Lie algebra of holomorphic vector fields (resp. Killing vector fields) of $(M,J)$ (resp. $(M,g)$) and ${\rm Aut}^{K}_0(M,J)$ (resp. ${\rm Isom}^{K}_0(M,g)$) the corresponding closed connected Lie groups. We then have:

\begin{thm}\label{thm:Calabi} 
Suppose $(M, g, J)$ is a compact $(K, q, a)$-extremal K\"ahler manifold. Then the group ${\rm Isom}^{K}_0(M,g)$ is a maximal compact connected subgroup of ${\rm Aut}^{K}_0(M,J)$. Furthermore, if ${\rm S}_{(K,q,a)}(g)=const$, then ${\rm Aut}^{K}_0(M,J)$ is a reductive complex Lie group.  
\end{thm}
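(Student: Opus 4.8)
The plan is to reduce both statements to the level of Lie algebras and then to analyse a weighted fourth-order elliptic operator whose kernel encodes $\mathfrak{h}^K$. First I would recall the Hodge-theoretic correspondence: since $K$ has zeroes, every holomorphic vector field $Z \in \mathfrak{h}^K$ has zeroes and may be written as $Z = \grad^{1,0}_g h$ for a complex-valued holomorphy potential $h$, unique up to an additive constant and necessarily $K$-invariant (because $Z$ commutes with $K$). Under this correspondence $Z$ is the complexification of a Killing field in $\mathfrak{k}^K$ precisely when the real part of $h$ is constant, i.e. when $h$ is, up to a constant, purely imaginary. Thus the theorem becomes a statement about the real structure of the space $\cH^K$ of $K$-invariant holomorphy potentials sitting inside the complex-valued $K$-invariant functions.

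Next I would introduce the weighted pairing $\ip{h_1,h_2}_w := \int_M h_1 \overline{h_2}\, f^{w}\, dv_g$, where $f = f_{(K,\omega,a)}$ and $w = w(q,m)$ is the unique power adapted to $\mathrm{S}_{(K,q,a)}$, together with the associated weighted Lichnerowicz operator $\cL_{(K,q,a)}$: a fourth-order elliptic operator, formally self-adjoint for $\ip{\cdot,\cdot}_w$ on $K$-invariant functions, whose kernel modulo constants is exactly $\cH^K$. The key computation is a weighted Weitzenb\"ock identity showing that $\cL_{(K,q,a)}$ differs from a real (conjugation-commuting) operator $\cL^{\R}_{(K,q,a)}$ by a single first-order term proportional to $\ip{dh,\, d\,\mathrm{S}_{(K,q,a)}(g)}$, that is, to the derivative of $h$ along the extremal field $\Xi = J\grad_g(\mathrm{S}_{(K,q,a)}(g))$. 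Here the extra $\Delta_g f$ and $|K|^2_g$ contributions in \eqref{(K,nu,a)-scalar-curvature} must combine with the weight $f^{w}$ so that the remainder is exactly this Lie derivative; verifying this combinatorial identity is the main obstacle, and it is where the precise form of the weighted scalar curvature is used.

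Granting the identity, the constant case is immediate. If $\mathrm{S}_{(K,q,a)}(g)$ is constant the first-order term vanishes, so $\cL_{(K,q,a)}$ is real, and hence $\cH^K$ is stable under complex conjugation and splits as $\cH^K = \cH^K_{\R} \oplus i\,\cH^K_{\R}$, where $\cH^K_{\R}$ denotes the real-valued holomorphy potentials. Translated back through the correspondence, this yields $\mathfrak{h}^K = \mathfrak{k}^K \oplus J\mathfrak{k}^K$, exhibiting $\mathfrak{h}^K$ as the complexification of the compact Lie algebra $\mathfrak{k}^K$. Consequently $\mathfrak{h}^K$ is reductive and the connected group $\mathrm{Aut}^K_0(M,J) = (\mathrm{Isom}^K_0(M,g))^{\C}$ is a reductive complex Lie group, which is the second assertion.

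For the general extremal case I would follow Calabi, in the streamlined form of Gauduchon and LeBrun--Simanca. First note that $\Xi$ is $K$-invariant and $\mathrm{Isom}^K_0$-invariant, since $\mathrm{S}_{(K,q,a)}(g)$ is built from the invariant data $(g,J,K,a)$; hence $\Xi$ lies in the centre of $\mathfrak{k}^K$. The derivation $\mathrm{ad}(\Xi)$, equivalently the first-order defect of $\cL_{(K,q,a)}$ computed above, then endows $\mathfrak{h}^K$ with a generalized eigenspace decomposition whose zero-part is the complexification of the centralizer of $\Xi$ in $\mathfrak{k}^K$, namely all of $\mathfrak{k}^K$, while the remaining eigenspaces account for the non-compact directions; this shows that $\mathfrak{k}^K$ is a maximal compact subalgebra of $\mathfrak{h}^K$. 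Finally I would pass from algebras to groups: $\mathrm{Isom}^K_0(M,g)$ is compact, being a closed subgroup of the isometry group of a compact manifold, and it is connected with Lie algebra $\mathfrak{k}^K$. Since $\mathfrak{k}^K$ is a maximal compact subalgebra of $\mathfrak{h}^K$, the standard conjugacy theory of maximal compact subgroups of connected Lie groups (Cartan--Mostow) shows that $\mathrm{Isom}^K_0(M,g)$ is a maximal compact connected subgroup of $\mathrm{Aut}^K_0(M,J)$, completing the proof.
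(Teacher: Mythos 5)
Your overall skeleton is the same as the paper's: the weighted pairing, the weighted Lichnerowicz-type operator, the commutation identity expressing the difference between the Calabi-type and Lichnerowicz-type operators as $\pm\frac{\sqrt{-1}}{2}\mathcal{L}_{\Xi}$, and the resulting $\mathcal{L}_{\Xi}$-eigenspace decomposition of $\mathfrak{h}^{K}$ correspond precisely to Lemma \ref{lem: Lich-dev}, \Cref{prop-L} and \Cref{thm:calabi}. But the step that actually proves maximality in the extremal (non-constant) case is missing, and the abstract route you propose cannot supply it. Knowing that the zero eigenspace $\mathfrak{h}^{K}_{(0)}$ of $\mathcal{L}_{\Xi}$ is the complexification of $\mathfrak{k}^{K}$ does not show that $\mathfrak{k}^{K}$ is maximal among Lie algebras of compact connected subgroups: the non-compact directions $J\mathfrak{k}^{K}_{\rm ham}$ lie inside $\mathfrak{h}^{K}_{(0)}$ itself (they are annihilated by $\mathcal{L}_{\Xi}$, not moved by it), so your assertion that ``the remaining eigenspaces account for the non-compact directions'' is false, and the eigenspace decomposition alone says nothing about them. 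Nor does Cartan--Mostow theory close the gap, because compactness of the generated subgroup is not detectable from the subalgebra: in ${\rm Lie}(\mathbb{C}^{\ast})=\mathbb{C}$ the subalgebras $\sqrt{-1}\,\mathbb{R}$ and $\mathbb{R}$ are isomorphic, yet only the first generates a compact subgroup. What is needed, and what the paper supplies, are two separate arguments for a compact connected $S$ with ${\rm Isom}^{K}_0(M,g)\subset S\subset{\rm Aut}^{K}_0(M,J)$ and $X\in{\rm Lie}(S)$: if $X$ lies in a $\lambda>0$ eigenspace, the Killing form $B$ of ${\rm Lie}(S)$ is semi-negative with kernel the center, one checks $B(X,\cdot)=0$, hence $X$ is central, contradicting $[X,\Xi]=-\lambda JX\neq 0$; and if $X\in J\mathfrak{k}^{K}_{\rm ham}$, i.e. $X=\grad_g P$, one needs the geometric recurrence argument that $P$ strictly increases along the flow of $X$, so this flow cannot have closure a torus inside ${\rm Aut}^{K}_0(M,J)$, as compactness of $S$ would force. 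Without these two arguments (especially the second) the proof does not go through.

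Two further gaps. Your opening claim that every $Z\in\mathfrak{h}^{K}$ has zeroes and hence a holomorphy potential is wrong: commuting with $K$ does not create zeroes (consider a parallel field on a K\"ahler manifold with a flat torus factor, commuting with a rotation field $K$ coming from the other factor). The paper instead decomposes $X=X_H+\grad_g P+J\grad_g Q$ and carries the parallel part $\mathfrak{a}$ through every splitting; this is also why the constant-scalar-curvature conclusion reads $\mathfrak{h}^{K}=\mathfrak{a}\oplus\mathfrak{k}^{K}_{\rm ham}\oplus J\mathfrak{k}^{K}_{\rm ham}$ rather than your $\mathfrak{k}^{K}\oplus J\mathfrak{k}^{K}$, which is not even a direct sum since $J\mathfrak{a}=\mathfrak{a}$. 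Finally, the identity you rightly call ``the main obstacle''---that the weighted operator differs from a conjugation-commuting operator exactly by $\frac{\sqrt{-1}}{2}\mathcal{L}_{\Xi}$---is deferred rather than proved; it is the genuine technical content of the theorem (the paper's Lemma \ref{lem: Lich-dev}, where the $\Delta_g f$ and $|K|^{2}_g$ terms of \eqref{(K,nu,a)-scalar-curvature} are shown to combine correctly with the weight $f^{q}$), so a complete proof must include it.
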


This basic result yields that each compact $(K, q, a)$-extremal K\"ahler manifold $(M, J, g)$ is invariant under a {\it maximal} torus $\T$  in the connected component of the identity of the reduced automorphism group ${\rm Aut}_{\rm red}(M,J)$, with $K \in {\rm Lie}(\T)$, thus linking to the point of view of \cite{AM}. In particular, we can deduce from Theorem~\ref{thm:Calabi} and \cite[Theorem 3]{AM} that if $(M, J)$ is toric, i.e. $\T$ is $m$-dimensional, then the $(K,q, a)$-extremal K\"ahler metrics are unique up to isometries in their K\"ahler classes (see Corollary~\ref{unique} below). Concerning the existence of conformally K\"ahler, Einstein--Maxwell metrics, Theorem \ref{thm:Calabi} and \cite[Theorem 5]{AM} yield together a complete classification of the latter on the toric complex surfaces $\mathbb{{CP}}^1 \times \mathbb{{CP}}^1$ and the Hirzebruch surfaces $\mathbb{F}_n = \mathbb{P}(\cO \oplus \cO(n))\to \mathbb{{CP}}^1$ in terms of {\it explicit} constructions given by either the Calabi Ansatz~\cite{LeB0, LeB1,KTF} or by the hyperbolic ambitoric ansatz~\cite{ambitoric1} (a riemannian analogue of the Plebanski-Damianski  explicit solutions~\cite{PD}). In practice, however, the algorithm of \cite[Theorem~5]{AM} allowing one to decide whether or not for a given K\"ahler class, a quasi-periodic holomorphic vector field $K$ and a constant $a>0$ there exists a compatible conformally-K\"ahler, Einstein--Maxwell metric is of considerable complexity, see \cite{FO}. The case $\mathbb{{CP}}^1 \times \mathbb{{CP}}^1$ has been successfully resolved by \cite{LeB0,AM} (see  also \cite{FO}):
\begin{cor} \label{c:LeBrun} Any conformally-K\"ahler, Einstein--Maxwell metric on ${\mathbb {CP}}^1 \times {\mathbb {CP}}^1,$ must be toric, and if it is not a product of Fubini-Study metrics on each factor, it must be homothetically isometric to one of the metrics constructed in \cite{LeB0}.
\end{cor}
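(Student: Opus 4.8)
The plan is to reduce the problem to the toric classification already available in \cite{LeB0, AM} by means of Theorem~\ref{thm:Calabi}. First I would translate the Einstein--Maxwell condition into the language of Definition~\ref{d:(K,nu.a)-extremal}. On $M = \mathbb{CP}^1 \times \mathbb{CP}^1$ (so $m = 2$), a conformally K\"ahler, Einstein--Maxwell metric $\tilde g = \frac{1}{f^2} g$ has, by \eqref{Ric J-inv}, a conformal factor $f = f_{(K,\omega,a)}$ that is a positive Killing potential of a Killing field $K$ of the underlying K\"ahler metric $g$; and its constant scalar curvature reads ${\rm S}_{(K,q,a)}(g) = {\rm Scal}_{\tilde g} = const$ for the weight $q = -(2m-1) = -3$. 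Thus $g$ is a compact $(K,q,a)$-extremal K\"ahler metric with constant $(K,q,a)$-scalar curvature, and Theorem~\ref{thm:Calabi} is in force.

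The crux is to prove that $g$ is toric. Here I would invoke the consequence of Theorem~\ref{thm:Calabi} recorded above: any compact $(K,q,a)$-extremal K\"ahler metric is invariant under a maximal torus $\T$ of the identity component of the reduced automorphism group ${\rm Aut}_{\rm red}(M,J)$, with $K \in {\rm Lie}(\T)$. The point special to $M = \mathbb{CP}^1 \times \mathbb{CP}^1$ is that every holomorphic vector field has zeroes, so ${\rm Aut}_{\rm red}(M,J)$ is the whole connected automorphism group $\mathrm{PSL}(2,\C) \times \mathrm{PSL}(2,\C)$, which has rank $2$. Hence $\dim_{\R} \T = 2 = \dim_\C M$, the $\T$-action on $M$ is effective, and $g$ is invariant under a maximal torus of half the real dimension; that is, $(M, J, g)$ is toric with $K$ in the Lie algebra of its torus.

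It then remains to cite the classification of toric conformally K\"ahler, Einstein--Maxwell metrics on the quadric. By \cite[Theorem 5]{AM} (see also \cite{FO}), a toric such metric on $\mathbb{CP}^1 \times \mathbb{CP}^1$ is either a product of Fubini--Study metrics on the two factors---the degenerate case $f = const$, $K = 0$---or is homothetically isometric to one of the explicit metrics of LeBrun \cite{LeB0}; together with the previous step this is exactly the assertion of Corollary~\ref{c:LeBrun}. The main obstacle is the toricity reduction of the second paragraph, and more precisely the \emph{maximality} of the torus: Theorem~\ref{thm:Calabi} guarantees that ${\rm Isom}^K_0(M,g)$ is a maximal compact subgroup of ${\rm Aut}^K_0(M,J)$, which is what forces the torus preserving $g$ to attain the full rank $2$ rather than merely the circle generated by $K$. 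It is this maximality, combined with the equality $\mathrm{rank}\,{\rm Aut}_{\rm red}(\mathbb{CP}^1\times\mathbb{CP}^1) = \dim_\C M$, that collapses the problem to the toric setting; the remaining classification is then a citation.
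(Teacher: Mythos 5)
Your proposal is correct and takes essentially the same route as the paper: the paper's proof consists precisely of invoking \Cref{cor:T-inv} (the maximal-torus invariance derived from \Cref{thm:Calabi}) together with the toric classification in \cite{AM} (the paper cites Proposition~6 of \cite{AM} where you cite Theorem~5, but both refer to the same toric resolution of the problem on ${\mathbb{CP}}^1\times{\mathbb{CP}}^1$). Your explicit rank computation for ${\rm Aut}_{\rm red}({\mathbb{CP}}^1\times{\mathbb{CP}}^1)=\mathrm{PSL}(2,\C)\times\mathrm{PSL}(2,\C)$, showing the maximal torus has real dimension $2=\dim_\C M$, simply spells out what the paper leaves implicit.
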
 
We also notice that similarily to the K\"ahler--Einstein and cscK cases~\cite{M,L}, Theorem~\ref{thm:Calabi} places an obstruction in terms of ${\rm Aut}_{0}^K(M,J)$ for $(M,J)$ to admit a K\"ahler metric of constant $(K, q, a)$-scalar curvature, in particular a conformally K\"ahler, Einstein--Maxwell metric. 
\begin{cor}\label{c:Hirzebruch} Let $(M, J) = {\mathbb P}(\cO \oplus \cO(1)_E) \to \mathbb{F}_n$  where $E= (\cO \oplus \cO(n)) \to {\mathbb {CP}}^1$ and $\mathbb{F}_n= {\mathbb P}(E)$ is the $n$-th Hirzebruch complex surface. Denote by $K$ the generator of the $S^1$-action on $M,$ corresponding to diagonal multiplications on the $\cO_E (1)$-factor. Then $(M, J)$ admits no K\"ahler metric of constant $(bK, q, a)$-scalar curvature for any values of $b$ and $q$.
\end{cor}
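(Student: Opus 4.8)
The plan is to run the classical Matsushima--Lichnerowicz obstruction, now available through Theorem~\ref{thm:Calabi}. Indeed, if $M$ admitted a K\"ahler metric of constant $(bK,q,a)$-scalar curvature, then by Theorem~\ref{thm:Calabi} the group ${\rm Aut}_0^{bK}(M,J)$ would be reductive; so it suffices to show that its Lie algebra $\mathfrak h^{bK}$ fails to be reductive for every real $b$. For $b\neq 0$ this algebra is the centraliser $\mathfrak h^{K}$ of $K$ in the holomorphic vector fields of $(M,J)$, while for $b=0$ it is the whole algebra of holomorphic vector fields, which contains $\mathfrak h^{K}$. Since the nilradical of a reductive complex Lie algebra is central, and every quotient of a reductive Lie algebra is again reductive, it is enough to exhibit a Lie algebra quotient of $\mathfrak h^{bK}$ whose nilradical is non-central.

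Such a quotient is produced by the bundle map $\pi\colon M=\mathbb{P}(\cO\oplus\cO_E(1))\to\mathbb{F}_n$. The field $K$ generates the fibrewise circle action of $\pi$, hence is $\pi$-vertical; and since $\pi_*\pi^{*}T\mathbb{F}_n=T\mathbb{F}_n$ for a $\mathbb{P}^1$-bundle, every holomorphic vector field on $M$ descends, so that $\pi_*$ defines a Lie algebra homomorphism $\mathfrak h^{bK}\to H^0(\mathbb{F}_n,T^{1,0}\mathbb{F}_n)$ with $\pi_*K=0$. I shall show that its image contains the destabilising part of ${\rm Aut}_0(\mathbb{F}_n)$. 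Recall that for $n\geq 1$ the unipotent radical of ${\rm Aut}_0(\mathbb{F}_n)$ is the abelian ideal $\mathfrak n\cong H^0(\mathbb{CP}^1,\cO(n))$ of fibre translations $p(z)\,\partial_w$, on which the fibre scaling $t=w\,\partial_w$ of the Levi factor acts by $[t,p\,\partial_w]=-p\,\partial_w$; in particular $\mathfrak n$ is a non-central nilpotent ideal and ${\rm Aut}_0(\mathbb{F}_n)$ is not reductive. Each of these fields is $\pi$-horizontal and lifts to $M$: a holomorphic vector field on the rational surface $\mathbb{F}_n$ always lifts to the line bundle $\cO_E(1)$, the obstruction lying in $H^1(\mathbb{F}_n,\cO)=0$, hence to $\cO\oplus\cO_E(1)$ and to $M$. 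Averaging such a lift over the circle generated by $K$ --- equivalently, taking the lift that respects the splitting $\cO\oplus\cO_E(1)$, which commutes with the fibre scaling $K$ --- produces lifts $\widetilde X_p,\widetilde t\in\mathfrak h^{K}\subseteq\mathfrak h^{bK}$ that still project to $p\,\partial_w$ and to $t$.

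It then follows that the image $I=\pi_*(\mathfrak h^{bK})$ contains $\mathfrak n$ and $t$. Being an ideal of $H^0(\mathbb{F}_n,T^{1,0}\mathbb{F}_n)$ that is contained in $I$, the abelian subalgebra $\mathfrak n$ is a non-zero nilpotent ideal of $I$, and it is non-central there because $[t,\mathfrak n]=-\mathfrak n\neq 0$. Hence $I$ has non-central nilradical and is not reductive; as $I$ is a quotient of $\mathfrak h^{bK}$, the algebra $\mathfrak h^{bK}$ is not reductive either, and Theorem~\ref{thm:Calabi} rules out any K\"ahler metric of constant $(bK,q,a)$-scalar curvature on $M$, for all $b$ and $q$. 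The one point that must be handled with care is the compatibility of the two fibrations in the second step: one has to check that the destabilising fields of $\mathbb{F}_n$ admit lifts to $M$ that genuinely commute with $K$, so as to land in the centraliser $\mathfrak h^{K}$ while keeping their non-reductive bracket relations after projection. Once this compatibility is secured, the reductivity obstruction furnished by Theorem~\ref{thm:Calabi} finishes the argument.
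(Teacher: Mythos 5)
Your proof is correct, and its skeleton is the same as the paper's: both arguments use \Cref{thm:Calabi} to deduce that a metric of constant $(bK,q,a)$-scalar curvature would force $\mathfrak{h}^{bK}$ to be reductive, and both reach a contradiction by pushing holomorphic vector fields down the fibration $\pi\colon M\to\mathbb{F}_n$ and exploiting the non-reductive structure of $\mathfrak{h}(\mathbb{F}_n)$ (so both implicitly require $n\geq 1$, which you state explicitly; for $n=0$ the centralizer $\mathfrak{h}^{K}(M)$ is in fact reductive and the obstruction disappears). Where you diverge is in how the relevant part of $\mathfrak{h}(\mathbb{F}_n)$ is reached. The paper cites \cite[Proposition 1.3]{ACGT3}, whose proof gives the exact sequence $0\to\mathfrak{h}^{K}_B(M)\to\mathfrak{h}^{K}(M)\to\mathfrak{h}(\mathbb{F}_n)\to 0$ with central kernel ${\rm span}_{\C}\{K,JK\}$, so that $\mathfrak{h}(\mathbb{F}_n)$ is a quotient of $\mathfrak{h}^{K}(M)$ by a central ideal and reductivity would pass to it at once. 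You prove only the partial surjectivity you actually need---that the image of $\pi_*$ contains the fibre scaling $t$ and the abelian ideal $\mathfrak{n}\cong H^0(\mathbb{CP}^1,\cO(n))$---by lifting these fields to $M$ via the vanishing of the Atiyah obstruction in $H^1(\mathbb{F}_n,\cO)=0$ and then making the lifts $K$-invariant (splitting-compatible lifts, or averaging over the circle generated by $K$, both of which are valid and preserve the projection); you then conclude from the facts that the nilradical of a reductive Lie algebra is central and that quotients of reductive algebras are reductive. Your route buys self-containedness: it replaces the citation of the exact sequence of \cite{ACGT3} by an elementary lifting argument, at the cost of having to verify the $K$-equivariance of the lifts, which you correctly identify as the delicate point and handle properly. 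The paper's route buys brevity and the stronger structural statement (full surjectivity onto $\mathfrak{h}(\mathbb{F}_n)$ with identified central kernel), which makes the conclusion, including the $b=0$ case, immediate.
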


\smallskip

We now describe our second result, which is a suitable modification of the stability of the existence of extremal K\"ahler metrics under deformation of the K\"ahler class, proved by LeBrun--Simanca in \cite{LS}, see also \cite{FS}.
In our extended context, and without loss of generality by using Theorem~\ref{thm:Calabi} above, we fix a maximal real torus $\T \subset {\rm Aut}_{\rm red}(M, J)$, a real weight $q$, and study the existence of a $\T$-invariant $(K, q, a)$-extremal K\"ahler metric as a function of the  K\"ahler class $\Omega\in H^{2}_{\rm dR}(M,\R)$, the vector field $K\in{\rm Lie}(\T)$, and the real constant $a>0$. We prove the following:
\begin{thm}\label{thm:LeBrun-Simanca} Suppose (without loss of generality by \Cref{thm:Calabi}) that $\T\subset{\rm Aut}_{\rm red}(M,J)$ is a maximal real torus and $(M,J)$ admits a $\T$-invariant $(K, q, a)$-extremal K\"ahler metric $(g, \omega)$. Then, for any $g$-harmonic, $\T-$invariant, $(1,1)$-form $\alpha$, and any $H\in {\rm Lie}(\T)$, there exist $\varepsilon >0$, such that for any real numbers $|s|<\varepsilon, |t|<\varepsilon$ and $|u|<\varepsilon$, there exists a $(K+u H, q, a+s)$-extremal K\"ahler metric in the K\"ahler class $[\omega + t\alpha]$.
\end{thm}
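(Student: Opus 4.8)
The plan is to realise the statement as an application of the implicit function theorem in $\T$-invariant H\"older spaces, with linearization a weighted Lichnerowicz-type operator whose kernel is pinned down by \Cref{thm:Calabi}. First I would reduce everything to the $\T$-invariant category. Since $\alpha$, $H$ and the normalization constant are $\T$-invariant and $(g,\omega)$ is $\T$-invariant, it suffices to produce $\T$-invariant solutions; moreover, by \Cref{thm:Calabi} and the maximality of $\T$, the extremal field $\Xi=J{\rm grad}_g{\rm S}_{(K',q,a')}(g')$ of any $\T$-invariant $(K',q,a')$-extremal metric is a $\T$-invariant Killing field (it is built from $\T$-invariant data), hence generates a torus commuting with $\T$ inside ${\rm Isom}_0\subset{\rm Aut}_{\rm red}$ and therefore lies in $\tor={\rm Lie}(\T)$. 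Thus, in this category, being $(K',q,a')$-extremal is equivalent to ${\rm S}_{(K',q,a')}(g')-f_{\xi}$ being constant for some $\xi\in\tor$, where $f_\xi$ is the Killing potential of $\xi$. For $|s|,|t|,|u|$ small, $\omega_t:=\omega+t\alpha$ is K\"ahler (as $\alpha$ is harmonic, hence bounded, and $\omega>0$), for a small $\T$-invariant potential $\phi$ the form $\omega_{t,\phi}:=\omega_t+dd^c\phi$ is K\"ahler with metric $g_{t,\phi}$, and by continuity the normalized potential $f_{(K+uH,\omega_{t,\phi},a+s)}$ stays positive, so ${\rm S}_{(K+uH,q,a+s)}(g_{t,\phi})$ is well defined.

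Second, I would set up the nonlinear map whose zeros are the sought metrics. Writing $f_{\xi,t,\phi}$ for the $\omega_{t,\phi}$-Killing potential of $\xi\in\tor$ and $\cH_{t,\phi}\subset C^\infty(M)^\T$ for the finite-dimensional space of such potentials together with the constants, define, near $(0,\xi_0)$ (with $\xi_0\in\tor$ the extremal field of $g$),
\[
\mathcal{N}(\phi,\xi,s,t,u):={\rm S}_{(K+uH,q,a+s)}(g_{t,\phi})-f_{\xi,t,\phi},
\]
and let $\overline{\mathcal{N}}$ be its projection onto the functions of vanishing weighted average. By the reduction above, $g_{t,\phi}$ is $(K+uH,q,a+s)$-extremal exactly when $\overline{\mathcal{N}}(\phi,\xi,s,t,u)=0$ for some $\xi\in\tor$, and $\overline{\mathcal{N}}(0,\xi_0,0,0,0)=0$ since $g$ is $(K,q,a)$-extremal.

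Third, I would analyse $D_{(\phi,\xi)}\overline{\mathcal{N}}$ at the base point. The $\xi$-derivative spans the Killing potentials $\{f_{\dot\xi}:\dot\xi\in\tor\}$, i.e. $\cH_g\ominus\R$, while the $\phi$-derivative is, up to a positive factor, a weighted Lichnerowicz-type operator $\mathbb{L}_g$ of order four: it is elliptic and, crucially, formally self-adjoint with respect to a suitable weighted measure $f^{N}\,dv_g$ (for a real power $N=N(q,m)$), with the first-order contributions coming from the variation of $f_{\xi_0,t,\phi}$ and of $f_{(K,\omega_{t,\phi},a)}$ arranging themselves into the self-adjoint form --- the weighted analogue of the classical phenomenon for extremal (non-cscK) metrics. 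Self-adjointness identifies the cokernel of $\mathbb{L}_g$ with its kernel, and \Cref{thm:Calabi} with the maximality of $\T$ gives $\ker\mathbb{L}_g=\cH_g$ among $\T$-invariant functions (there are no $\T$-invariant holomorphy potentials beyond those of $\tor$ and the constants). Hence the image of $\mathbb{L}_g$ is the weighted-orthogonal complement of $\cH_g$, and adjoining the $\xi$-directions $\cH_g\ominus\R$ makes $D_{(\phi,\xi)}\overline{\mathcal{N}}$ surjective onto the weighted-mean-zero $\T$-invariant functions, with finite-dimensional kernel isomorphic to $\cH_g$.

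Finally, restricting $\phi$ to the weighted-$L^2$ orthogonal complement $W$ of $\cH_g$ turns $D_{(\phi,\xi)}\overline{\mathcal{N}}|_{W\times\tor}$ into an isomorphism between the appropriate $\T$-invariant H\"older spaces, so the implicit function theorem yields $\eps>0$ and a smooth family $(\phi(s,t,u),\xi(s,t,u))\in W\times\tor$ with $\overline{\mathcal{N}}=0$ for $|s|,|t|,|u|<\eps$; elliptic regularity makes the $\phi(s,t,u)$ smooth, and $g_{t,\phi(s,t,u)}$ is then a $(K+uH,q,a+s)$-extremal K\"ahler metric in $[\omega+t\alpha]$ with extremal field $\xi(s,t,u)$. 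I expect the third step to be the main obstacle: establishing that the linearization of ${\rm S}_{(K,q,a)}$ really is the self-adjoint weighted Lichnerowicz operator $\mathbb{L}_g$ and identifying $\ker\mathbb{L}_g=\cH_g$. This demands a careful computation of the variation of \eqref{(K,nu,a)-scalar-curvature}, including the implicit $\phi$-dependence of $f_{(K,\omega_\phi,a)}$ through both the normalization $\int_M f\,dv_g=a$ and the defining relation $\imath_K\omega_\phi=-df$, together with the verification that $\mathbb{L}_g$ is Fredholm of index zero on $\T$-invariant functions, so that kernel and cokernel coincide.
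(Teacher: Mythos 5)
Your proposal is correct, and its analytic core coincides with the paper's: an implicit/inverse function theorem argument whose linearization in $\phi$ is exactly the weighted Lichnerowicz operator $\mathbb{L}^{g}_{(K,q,a)}(\dot\phi)=f^{2-q}\delta\delta\bigl(f^{q}D^{-}d\dot\phi\bigr)$ (self-adjoint for the weight $f^{q-2}v_\omega$, i.e.\ your $N=q-2$, independent of $m$), with kernel among $\T$-invariant functions identified with the Killing potentials of ${\rm Lie}(\T)$ via \Cref{thm:Calabi}, Proposition~\ref{prop-L} and maximality of $\T$; the cancellation you invoke between $(d{\rm S},d\dot\phi)$ and the variation of the Killing potential of the extremal field is precisely the identity $D(\dot\phi)-(d{\rm S},d\dot\phi)=-2\mathbb{L}^{g}_{(K,q,a)}\dot\phi$ appearing in the paper's computation of $D_0\Psi$. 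Where you genuinely diverge is in the treatment of the moving space of Killing potentials. The paper follows LeBrun--Simanca literally: $\phi$ is the only infinite-dimensional unknown, and the equation is $(\Id-\Pi^{\T}_{g})\circ(\Id-\Pi^{\T}_{(s,t,u,\phi)}){\rm S}(s,t,u,\phi)=0$, valued in the fixed complement $L^{2}_{k}(M,\R)^{\T,\perp}$; differentiating this composed projection is what forces the paper, in Section~\ref{s5} and Section~\ref{s6}, to compute the $(s,t,u)$-derivatives of the Futaki invariant $\mathcal{F}$, of the Futaki--Mabuchi bilinear form $\mathcal{B}$, of the extremal vector field $Z(s,t,u)$, and of the projected scalar curvature $\Pi^{\T}_{(s,t,u,\phi)}{\rm S}$ (Lemmas~\ref{d-F-B-t}, \ref{d-F-B-K-a}, \ref{d-Z-s-t-u}, \ref{ext-pot}). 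You instead adjoin the extremal field $\xi\in{\rm Lie}(\T)$ as a finite-dimensional unknown and solve ${\rm S}-f_{\xi}={\rm const}$, quotienting only by constants (with the fixed base-point weight, which indeed still detects constancy); this makes all of the paper's auxiliary derivative lemmas unnecessary, at the cost of the preliminary reduction that any $\T$-invariant extremal metric has its extremal field in ${\rm Lie}(\T)$ --- which is correct by your maximality argument and is exactly the paper's \Cref{cor:T-inv} and relative-extremal formulation. In short: your route is leaner and closer to modern treatments of deformations of extremal metrics, while the paper's longer route produces, as a by-product, deformation formulas for the Futaki invariant and the extremal vector field that have independent interest; both rest on the same kernel identification, the same ellipticity and index-zero self-adjointness on the compact manifold, and the same bootstrapping for regularity.
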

This result provides an efficient way to obtain many new examples from known ones.

\section*{Acknowledgement} 
I would like to thank my thesis supervisor Vestislav Apostolov for his invaluable advice and for sharing his insights with me. I'm also grateful to Professors A. Futaki and H. Ono who kindly inform me that they have obtained independently a proof of \Cref{thm:Calabi} in \cite{FO1}.

\section{A familly of variational problems in K{\"a}hler geometry}
\label{s1}
In this section we recall the Apostolov--Maschler \cite{AM} moment map interpretation of the  $(K,q,a)$-scalar curvature. In \cite{AM}, the case $q=-2m+1$ is considered, but their argument works for any weight $q$ (see \cite{ACGL}).\\

\smallskip

Let $(M,J,\omega)$ be a compact K{\"a}hler manifold of real dimension $2m\geq 4$. We denote by $\mathfrak{h}_{\rm red}$ the Lie algebra of the reduced automorphism group ${\rm Aut}_{\rm red}(M,J)$, given by the real holomorphic vector field with zeros (see \cite{Gauduchon-book}). Let $K\in\mathfrak{h}_{\rm red}$ be a quasi-periodic Killing vector field generating a torus $G\subset {\rm Aut}_{\rm red}(M,J)$. It is well known that $G$ acts in a isometric hamiltonian way on $(M,\omega)$. Let $f_{(K,\omega,a)}\in C^{\infty}(M,\mathbb{R})$ be the normalized positive Killing potential of $K$, defined by the condition $\int_M f_{(K,\omega,a)}v_\omega=a$.\\      

\smallskip

We denote by $\mathcal{K}^{G}(M,\omega)$ the space of all $\omega$-compatible, $G$-invariant K\"ahler structures on $(M,\omega)$, and consider the natural action of the infinite dimensional group ${\rm Ham}^{G}(M,\omega)$, of $G$-equivariant Hamiltonian transformations of $(M,\omega)$. We have the identification $${\rm Lie}\left({\rm Ham}^{G}(M,\omega)\right)\cong C^{\infty}_{0}(M,\mathbb{R})^{G}$$ where $C^{\infty}_{0}(M,\mathbb{R})^{G}$ denote the space of smooth $G$-invariant functions with zero mean value with respect to $f_{(K,\omega,a)}^{q-2}v_\omega$, ($v_\omega=\frac{\omega^{m}}{m!}$ being the Riemanian volum form) endowed with the Poisson bracket.\\

\smallskip
For any $q\in\mathbb{R}$, the space $\mathcal{K}^{G}(M,\omega)$ carries a $q$-weighted formal K{\"a}hler structure $(\bold{J},\bold{\Omega}^{(K,q,a)})$ given by (\cite{donaldson, fujiki, AM})
\begin{eqnarray*}
\bold{\Omega}^{(K,q,a)}_J(\dot{J}_1,\dot{J}_2)&=&\frac{1}{2}\int_M {\rm Tr}(J\dot{J}_1\dot{J}_2)f^{q}_{(K,\omega,a)}v_\omega,\\
\bold{J}_J(\dot{J})&=&J\dot{J},
\end{eqnarray*}
where the tangent space of $\mathcal{K}^{G}(M,\omega)$ at $J$ is identified with the space of smooth $G$-invariant sections $\dot{J}$ of ${\rm End}(TM)$ satisfying
\begin{equation*}
\dot{J}J+J\dot{J}=0,\quad \omega(\dot{J}.,.)+\omega(.,\dot{J}.)=0.
\end{equation*}
In what follows we denote by $g_J:=\omega(.,J.)$ the K{\"a}hler metric corresponding to $J\in\mathcal{K}^{G}(M,\omega)$, and index all objects calculated with respect to $J$ similarly. On $C^{\infty}_{0}(M,\mathbb{R})^{G}$, we consider the scalar product given by,
\begin{equation*}
\langle\phi,\psi\rangle_{(K,q,a)}=\int_M\phi\psi f^{q-2}_{(K,\omega,a)}v_\omega.
\end{equation*}
\begin{thm}\label{moment-map}\cite{AM}
The action of ${\rm Ham}^{G}(M,\omega)$ on $\left(\mathcal{K}^{G}(M,\omega),\bold{J},\bold{\Omega}^{(K,q,a)}\right)$ is Hamiltonian with a momentum map given by the $\langle.,.\rangle_{(K,q,a)}$-dual of the $(K,q,a)-$scalar curvature given by \eqref{(K,nu,a)-scalar-curvature}.
\end{thm}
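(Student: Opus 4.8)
The plan is to follow the Donaldson--Fujiki scheme in the form used by Apostolov--Maschler, the only additional bookkeeping being to carry the weight $f^{q}$ through the computation for arbitrary $q$. Throughout one keeps $\omega$, the torus $G$, and the normalized potential $f=f_{(K,\omega,a)}$ fixed, and treats $J\in\mathcal{K}^{G}(M,\omega)$ as the variable; since $\omega$ and $K$ are fixed, so are $f$ and $v_\omega$, and only $g_J=\omega(\cdot,J\cdot)$ and the curvature quantities built from it depend on $J$. Saying that the ${\rm Ham}^{G}(M,\omega)$-action is Hamiltonian with the asserted momentum map amounts to three assertions: that $\Om^{(K,q,a)}$ is a (formal) symplectic form, that the defining equation
\begin{equation*}
\iota_{\mathcal{X}_\phi}\Om^{(K,q,a)} = d\mu^\phi,\qquad \mu^\phi(J):=\int_M {\rm S}_{(K,q,a)}(g_J)\,\phi\, f^{q-2}_{(K,\omega,a)}v_\omega,
\end{equation*}
holds for every $\phi\in C^{\infty}_{0}(M,\R)^{G}$, and that $\mu$ is equivariant. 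Closedness of $\Om^{(K,q,a)}$ follows from the construction exactly as in the unweighted case, and equivariance follows by the same naturality argument as in the unweighted theory \cite{donaldson,fujiki,AM}; thus the core of the proof is the pointwise-in-$J$ identity above.

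To set it up I would first identify the fundamental vector field of $\phi$. Writing $X_\phi$ for the Hamiltonian vector field of $\phi$ (so that $\iota_{X_\phi}\omega=-d\phi$, matching the convention $\iota_K\omega=-df$ of the introduction), its flow acts on $\mathcal{K}^{G}(M,\omega)$ by pulling back $J$, so the induced tangent vector is $\mathcal{X}_\phi=\mathcal{L}_{X_\phi}J$, which one checks is $G$-invariant and satisfies the two compatibility conditions defining $T_J\mathcal{K}^{G}(M,\omega)$. Inserting this into the symplectic form gives
\begin{equation*}
\Om^{(K,q,a)}_J(\mathcal{X}_\phi,\dot J)=\frac12\int_M{\rm Tr}\!\left(J\,(\mathcal{L}_{X_\phi}J)\,\dot J\right)f^{q}_{(K,\omega,a)}v_\omega .
\end{equation*}
On the other side, because $f$ and $v_\omega$ are independent of $J$, differentiating $\mu^\phi$ in the direction $\dot J$ yields $d\mu^\phi(\dot J)=\int_M \big(D_J{\rm S}_{(K,q,a)}\big)(\dot J)\,\phi\, f^{q-2}_{(K,\omega,a)}v_\omega$. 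Matching the two expressions is therefore equivalent to showing that the linearization $D_J{\rm S}_{(K,q,a)}$ is, up to the compatibility pairing, the formal adjoint of the operator $\phi\mapsto\mathcal{L}_{X_\phi}J$ with respect to the two weighted $L^2$-products.

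The technical heart, and the step I expect to be most delicate, is the first variation of the weighted scalar curvature together with the weighted integration by parts. Differentiating ${\rm S}_{(K,q,a)}=f^2{\rm Scal}_g+2qf\Delta_g f-q(q-1)|K|^2_g$ term by term (with $f$ fixed) requires the standard variation formulas for ${\rm Scal}_g$, for $\Delta_g f$, and for $|K|^2_g$ under a change of complex structure $\dot J$; these are second order in $\dot J$ and produce divergence terms. The point of the specific weights is that the factor $f^{q}$ in $\Om^{(K,q,a)}$ and the factor $f^{q-2}$ in $\langle\cdot,\cdot\rangle_{(K,q,a)}$ differ precisely by the two explicit powers of $f$ appearing in ${\rm S}_{(K,q,a)}$, so that when the derivatives falling on $\phi$ are integrated by parts against $f^{q-2}v_\omega$ and recombined with the Laplacian and $|K|^2_g$ terms, one recovers exactly the trace density $\tfrac12{\rm Tr}(J(\mathcal{L}_{X_\phi}J)\dot J)f^{q}$. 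Keeping this weight bookkeeping consistent---ensuring that every stray power of $f$ and every boundaryless divergence cancels---is where the computation must be done carefully; for $q=-2m+1$ it reduces to the identity established in \cite{AM}, and the argument goes through for general $q$.

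Finally, I would record why the normalization enters: the Lie algebra $C^{\infty}_{0}(M,\R)^{G}$ consists of $G$-invariant functions of zero mean with respect to $f^{q-2}_{(K,\omega,a)}v_\omega$, which is exactly the condition under which $\mu$ descends to a well-defined map into the dual, killing the ambiguity of ${\rm S}_{(K,q,a)}$ by a constant. With the pointwise identity and this normalization in hand, the moment map equation, and hence the theorem, follows.
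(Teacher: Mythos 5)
The paper offers no proof of this theorem: it is recalled verbatim from \cite{AM}, with the remark at the start of Section~\ref{s1} that the argument given there for $q=-2m+1$ works for any weight $q$ (citing \cite{ACGL}). Your proposal is exactly that Donaldson--Fujiki/Apostolov--Maschler argument with the weight carried through---the fundamental vector field $\mathcal{L}_{X_\phi}J$, the matching of the $f^{q}$ weight in $\boldsymbol{\Omega}^{(K,q,a)}$ against the $f^{q-2}$ weight in $\langle\cdot,\cdot\rangle_{(K,q,a)}$ via the explicit powers of $f$ in ${\rm S}_{(K,q,a)}$, and the role of the zero-mean normalization are all as in \cite{AM}---so it takes essentially the same approach as the paper, which likewise leaves the central variational identity to the cited source.
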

\begin{rem}\leavevmode
\begin{enumerate}
\item The weight $q=-2m+1$ corresponds to the conformally K\"ahler, Einstein--Maxwell case studied in \cite{AM}, and ${\rm S}_{(K,q,a)}$ computes the scalar curvature of the hermitian metric $\tilde{g}_J:=f_{(K,\omega,a)}^{-2}g_J$. 
\item If $q=0$, ${\rm S}_{(K,q,a)}(J)$ computes the so-called {\it conformal scalar curvature} $\tilde{\kappa}_J$ of the hermitian metric $\tilde{g}_J$ given by (see e.g. \cite{Gaud}),
\begin{equation*}
\tilde{\kappa}_J=(2m-1)\left\langle \tilde{W}\left(\tilde {F}_J\right),\tilde {F}_J\right\rangle_{\tilde{g}_J},
\end{equation*}  
where $\tilde {F}_J=\tilde g_J(J.,.)$ is its fundamental 2-form of $(\tilde{g}_J,J)$ and $\tilde W$ is the corresponding Weil tensor. 
\item The weight $q=-m-1$ appears in the study of Levi-K\"ahler quotients (see e.g. \cite{ACGL}).
\item For a real number $p$, one can define,
\begingroup\reqnos
\begin{equation}\label{S-nu-mu}
{\rm S}_{(K,p,q,a)}(J):=f_{(K,\omega,a)}^{p-2}{\rm S}_{(K,q,a)}(g_J).
\end{equation}\endgroup
Then the $\langle\cdot,\cdot\rangle_{(K,q-p+2,a)}$-dual of \eqref{S-nu-mu} is a momentum map for the action of ${\rm Ham}^{G}(M,\omega)$ on $\left(\mathcal{K}^{G}(M,\omega),\bold{J},\bold{\Omega}^{(K,q,a)}\right)$. Taking $(p,q)=(2,q)$ we obtain Theorem~\ref{moment-map}, wheres the value $(p,q)=(\frac{2}{m},-1)$ corresponds to the Lejmi--Upmeier moment map given by the {\it hermitian scalar curvature} of $\tilde{g}_J$ (see \cite{LU}).
\end{enumerate}
\end{rem}
\section{The extended Calabi problem}
\subsection{The $(K,q,a)$-constant scalar curvature K{\"a}hler metrics}
Following \cite{AM} we now fix the complex manifold $(M,J)$ and vary the K\"ahler form $\omega$ within a fixed K{\"a}hler class $\Omega\in H^{2}(M,\mathbb{R})\cap H^{1,1}(M,\mathbb{C})$. We also fix the compact torus $G\subset {\rm Aut}_{\rm red}(M,J)$ generated by a quasi periodic vector field $K\in {\rm Lie}(G)$, and denote by $\mathcal{K}_{\Omega}^{G}(M,J)$ the space of $G$-invariant K{\"a}hler forms $\omega\in \Omega$. Let $f_{(K,\omega,a)}$ be the normaliezed Killing potential of $K$ with respect to $\omega$, with normalization constant $a>0$, such that $f_{(K,\omega,a)}>0$. As shown in \cite[Lemma 1] {AM} we have $f_{(K,\omega^{\prime},a)}>0$ on $M$ for all $\omega^{\prime}\in\mathcal{K}_{\Omega}^{G}(M,J)$.\\

\smallskip

The space $\mathcal{K}_{\Omega}^{G}(M,J)$ is a Frechet manifold given near $\omega\in\mathcal{K}_{\Omega}^{G}(M,J)$ by the open subset of elements $\phi\in C^{\infty}(M,\mathbb{R})^{G}\slash \mathbb{R}$ such that $\omega+dd^{c}\phi>0$ is  positive definite. The tangent space of $\mathcal{K}_{\Omega}^{G}(M,J)$ at $\omega$ is identified with $C^{\infty}_{0}(M,\mathbb{R})^{G}$, the space of $G$-invariant smooth functions with mean value $0$ with respect to $f_{(K,\omega,a)}^{q-2}v_\omega$.\\

\smallskip

We then consider the following generalized Calabi problem on $\mathcal{K}^{G}_\Omega(M,J)$ (see \cite{AM}):
\begin{problem}
For a weight $q\in\R$, a quasi-periodic vector field $K$ generating a torus $G$ in ${\rm Aut}_{\rm red}(M,J)$, $\omega\in\mathcal{K}^{G}_\Omega(M,J)$ and $a>0$ such that $f_{(K,\omega,a)}>0$, does there exist $\phi\in C_{0}^{\infty}(M,\R)^{G}$ such that $\omega+dd^{c}\phi$ is $(K,q,a)$-extremal?
\end{problem}

In what follows we calculate the first variation of the $(K,q,a)$-scalar curvature along $\omega\in\mathcal{K}^{G}_\Omega(M,J)$. We denote by $D$ the Levi-Civita connection and by $\delta=D^{\star}$ the co-differential of $(M,\omega,g)$.
For a $1$-form $\alpha$ on $M$, let $D^{\pm}\alpha$ be the $J$-invariant (resp. $J$-anti-invariant) part of $D\alpha$, i.e.
\begin{equation*}
\left(D^{\pm}\alpha\right)_{X,Y}=\frac{1}{2}\left(\left(D\alpha\right)_{JX,JY}\pm\left(D\alpha\right)_{X,Y}\right).
\end{equation*}
There is a natural action on $p$-forms $\psi$ induced by $J$ as follows,
\begin{equation*}
(J\psi)(X_1,\cdots,X_p)=(-1)^{p}\psi(JX_1,\cdots,JX_p).
\end{equation*} 
The twisted differential and the twisted codifferential on $p$-forms are defined by,
\begin{align*}
d^{c}=&JdJ^{-1},\\
\delta^{c}=&J\delta J^{-1}.
\end{align*}
To simplify notation we omit below the index $(K,\omega,a)$ of $f_{(K,\omega,a)}$,
\begin{lem}\label{lem: Lich-dev}
For any $G$-invariant $1$-form $\alpha$ we have 
\begin{align}
\label{Lich-dev}
\begin{split}
2f^{2-q}\delta\delta\left(f^{q}D^{-}\alpha\right) =& 2 f^{2}\delta\delta\left(D^{-}\alpha\right)-2q f\left(\Delta\alpha,df\right)
\\
&+2q f \left(\Delta df,\alpha\right)+2q f(\delta d\alpha,df)
\\
&-q(q-1)(\alpha,d(df,df))+q(q-1)(df,d(\alpha,df)),
\end{split}
\end{align}
where $(\cdot,\cdot)$ stand for the inner product of tensors induced by $g$.
\end{lem}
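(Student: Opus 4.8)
The plan is to obtain \eqref{Lich-dev} as a pure \emph{commutation} identity: the left-hand side differs from $2f^{2}\delta\delta(D^{-}\alpha)$ only by the terms produced when the weight $f^{q}$ is carried past the two codifferentials, so I would not expand $\delta\delta(D^{-}\alpha)$ at all. Writing $T=D^{-}\alpha$ and using the two Leibniz rules $\delta(hT)=h\,\delta T-\iota_{\nabla h}T$ (for a $2$-tensor, where $\iota_{\nabla h}T=T(\nabla h,\cdot)$) and $\delta(h\beta)=h\,\delta\beta-(dh,\beta)$ (for a $1$-form), together with $d(f^{q})=qf^{q-1}df$, a double application gives
\[
\delta\delta(f^{q}T)=f^{q}\delta\delta T-qf^{q-1}\big[(df,\delta T)+\delta(\iota_{\nabla f}T)\big]+q(q-1)f^{q-2}(df,\iota_{\nabla f}T).
\]
Multiplying by $2f^{2-q}$ isolates the leading term $2f^{2}\delta\delta(D^{-}\alpha)$ and reduces the whole statement to the identification of one term proportional to $q(q-1)$ and one proportional to $q$. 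Here $\nabla f$ denotes the $g$-gradient of $f$, so that the hamiltonian hypothesis reads $K=J\nabla f$, and $|K|^{2}=|\nabla f|^{2}=(df,df)$.

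For the $q(q-1)$ term I would compute the scalar $(df,\iota_{\nabla f}T)=T(\nabla f,\nabla f)=(D^{-}\alpha)(\nabla f,\nabla f)$ directly. Since $D^{-}\alpha$ is genuinely $J$-anti-invariant, this equals $\tfrac12\big[(D\alpha)(K,K)-(D\alpha)(\nabla f,\nabla f)\big]$. The two diagonal terms are evaluated from the gradient identity $D_{\nabla f}\nabla f=\tfrac12\nabla(df,df)$ and, for the Killing field $K$, from $D_{K}K=-\tfrac12\nabla|K|^{2}=-\tfrac12\nabla(df,df)$; the $G$-invariance $\mathcal{L}_{K}\alpha=0$ enters in the form $(D_{K}\alpha)(K)=-\alpha(D_{K}K)$. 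This yields $(D^{-}\alpha)(\nabla f,\nabla f)$ as a combination of $(\alpha,d(df,df))$ and $(df,d(\alpha,df))$, reproducing the last line of \eqref{Lich-dev}.

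The term linear in $q$ is the heart of the matter. I must rewrite $(df,\delta(D^{-}\alpha))+\delta(\iota_{\nabla f}(D^{-}\alpha))$ as $(\Delta\alpha,df)-(\Delta df,\alpha)-(\delta d\alpha,df)$ (up to the overall sign fixed by the $\delta$-convention). The starting points are the elementary identity $\delta(D\alpha)=D^{*}D\alpha$ (the rough Laplacian) together with the Weitzenb\"ock formula $\Delta\alpha=D^{*}D\alpha+{\rm Ric}(\alpha)$, which introduces the Hodge Laplacian $\Delta$, and the K\"ahler condition $DJ=0$, which lets me treat the $J$-conjugated piece $(D\alpha)(J\cdot,J\cdot)$ of $D^{-}\alpha$ by commuting $D$ with the $J$-action. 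Expanding $\delta(\iota_{\nabla f}(D^{-}\alpha))$ also produces a ``second-slot'' divergence of $D^{-}\alpha$ (which carries the antisymmetric part of $D\alpha$ and is the origin of the $\delta d\alpha$ contribution) together with a pointwise pairing $(D^{-}\alpha,\Hess f)$.

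The main obstacle is precisely the cancellation of curvature in this linear term: the Weitzenb\"ock and $J$-conjugation steps both generate ${\rm Ric}$-contractions, and these must recombine into the single term $(\Delta df,\alpha)$ with no residual curvature. I expect this to be forced by the hypotheses $\mathcal{L}_{K}\alpha=0$ and $K=J\nabla f$ (equivalently the Killing equation for $K$ and the closedness of the Ricci form on the K\"ahler manifold), which convert the Ricci contractions against $df$ into Hodge-Laplacian terms. Once this cancellation is verified, assembling the leading, linear, and quadratic contributions---after fixing the sign convention for $\delta$ consistently, noting that the $q(q-1)$ coefficient is sign-robust while the linear coefficient is not---gives \eqref{Lich-dev}.
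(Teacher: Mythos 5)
Your reduction of the lemma to a pure commutation identity is exactly the paper's starting point (there the weight terms are written as contractions with $JK=-\grad_g f$ rather than with $\grad_g f$), and your treatment of the $q(q-1)$ term---evaluating $(D^{-}\alpha)(\grad_g f,\grad_g f)$ from $D_{K}K=-\tfrac12\grad_g|K|^2$, $D_{\grad_g f}\grad_g f=\tfrac12\grad_g(df,df)$ and $\mathcal{L}_K\alpha=0$---is the same computation as \eqref{6} in the paper. So the architecture is right and the quadratic term is fine.

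The gap is the term linear in $q$, which you yourself call the heart of the matter and then leave as an expectation (``I expect this to be forced by the hypotheses'') rather than a proof; that cancellation is the entire content of the lemma beyond Leibniz bookkeeping, and your outline does not contain the facts that make it happen. The paper closes it in three concrete steps. First, since $K=J\grad_g f$ is Killing on a K\"ahler manifold, $\Hess f$ is symmetric and $J$-invariant (equivalently $D^{-}df=0$); hence it is orthogonal both to the symmetric, $J$-anti-invariant part $\Psi$ and to the skew part $\Phi$ of $D^{-}\alpha$. This kills precisely the pairing $(D^{-}\alpha,\Hess f)$ that you note appears when expanding $\delta\bigl((D^{-}\alpha)(\grad_g f,\cdot)\bigr)$, and via \eqref{adj} it reduces the two linear terms to a single quantity $2(\delta\Psi)(JK)$. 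Second, the curvature cancellation is not a formal recombination of two Weitzenb\"ock identities: after Gauduchon's formula for $\delta D^{-}\alpha$ (Lemma 1.23.4 of his notes) produces ${\rm Ric}(\grad_g f,\alpha^{\sharp})$, one needs the identity ${\rm Ric}(\grad_g f,\cdot)=\Delta df-\delta D^{+}df=\tfrac12\Delta df$, where the first equality is Bochner's formula for the exact form $df$ and the second again uses $D^{-}df=0$; this is what converts the Ricci contraction into $\tfrac12(\Delta df,\alpha)$ with nothing left over. Third, the skew part enters through $2\Phi=d\alpha-Jd\alpha$, and one must show that the $Jd\alpha$ piece drops: the paper does this via $(\delta Jd\alpha,df)=-(\delta^{c}d\alpha)(K)=\mathcal{L}_K\delta^{c}\alpha=0$, which is exactly where the $G$-invariance of $\alpha$ and the Killing property of $K$ are used, and which explains why only $\delta d\alpha$, and not its $J$-conjugate, survives in \eqref{Lich-dev}. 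Your proposal never addresses this last vanishing at all. Without these three steps the identity is not established; once they are supplied, your plan coincides with the paper's proof.
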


\begin{proof}
Indeed, 
\begin{eqnarray*}
2f^{2-q}\delta\delta\left(f^{q}D^{-}\alpha\right)&=&2f^{2}\delta\delta(D^{-}\alpha)+2q f(\delta D^{-}\alpha)(JK)\\&&+2q f\delta\left((D^{-}\alpha)(JK,\cdot)\right)-2q(q-1)(D^{-}\alpha)(K,K).
\end{eqnarray*}
We consider the decomposition of the tensor $D^{-}\alpha$ in symmetric and skew-symmetric parts $\Psi$ and $\Phi$, respectively,
\begin{equation*}
D^{-}\alpha=\Psi+\Phi.
\end{equation*}
For any vector field $X$ on $M$ we have
\begin{align}
\label{adj}
\begin{split}
\delta\left(\Psi(X,.)\right) &= -(\Psi,DX^{\flat})+\left(\delta\Psi\right)(X),
\\
\delta\left(\Phi(X,.)\right)&=(\Phi,DX^{\flat})-\left(\delta\Phi\right)(X).
\end{split}
\end{align}
Using \eqref{adj} for $X=JK$ we get
\begin{align*}
\delta\left(\Psi(JK,.)\right)&=(\delta\Psi)(JK),\\
\delta\left(\Phi(JK,.)\right)&=-(\delta\Phi)(JK).
\end{align*}
Thus,
\begin{equation}\label{4}
2f^{2-q}\delta\delta\left(f^{q}D^{-}\alpha\right)=2f^{2}\delta\delta(D^{-}\alpha)+4\nu f(\delta\Psi)(JK)-2q(q-1)(D^{-}\alpha)(K,K).
\end{equation}
Using \cite[Lemma 1.23.4]{Gauduchon-book} and $2\Phi=d\alpha-Jd\alpha$ we have
\begin{align}\label{5}
\begin{split}
(\delta\Psi)(JK)&=-(\delta D^{-}\alpha,df)+(\delta\Phi)(df^{\sharp})\\
&=-\dfrac{1}{2}(\Delta\alpha,df)+{\rm Ric}(\grad_g f,\alpha^{\sharp})+\frac{1}{2}(\delta d\alpha,df)-\frac{1}{2}(\delta Jd\alpha,df)\\
&=-\dfrac{1}{2}(\Delta\alpha,df)+(\Delta df,\alpha)-\left(\delta D^{+}df,\alpha\right)+\frac{1}{2}(\delta d\alpha,df)\\
&=-\dfrac{1}{2}(\Delta\alpha,df)+\dfrac{1}{2}(\Delta df,\alpha)+\frac{1}{2}(\delta d\alpha,df)
\end{split}
\end{align}
where we have used the identity $(\delta Jd\alpha,df)=-(\delta^{c}d\alpha)(K)=\mathcal{L}_K\delta^{c}\alpha=0$ which holds since $K$ is Killing. Furthermore,
\begin{align}\label{6}
\begin{split}
2(D^{-}\alpha)(K,K)&=\left(D_{K}\alpha\right)(K)-\left(D_{JK}\alpha\right)(JK)\\
&=-(df,d(\alpha,df))+(d^{c}f,d(\alpha,d^{c}f))-2\alpha\left(D_{K}K\right)\\
&=-(df,d(\alpha,df))+(d^{c}f,d(\alpha,d^{c}f))+(\alpha,d(df,df))\\
&=-(df,d(\alpha,df))+(\alpha,d(df,df)),
\end{split}
\end{align}
since $(d^{c}f,d(\alpha,d^{c}f))=\mathcal{L}_K(\alpha,d^{c}f)=0$ by the $G$-invariance of $\alpha$. The result follows by substituting \eqref{5} and \eqref{6} in \eqref{4}. This completes the proof.
\end{proof}
\begin{defn}
We define the $(K,q,a)$-Lichnerowicz operator 
\begin{equation*}
\mathbb{L}_{(K,q,a)}^{g}:C^{\infty}(M,\mathbb{R})^{G}\to C^{\infty}(M,\mathbb{R})^{G},
\end{equation*}
 with respect to a metric $g$ in $\mathcal{K}^{G}_\Omega(M,J)$ by
\begin{equation*}
\mathbb{L}^{g}_{(K,q,a)}(\phi)=f^{2-q}_{(K,\omega,a)}\delta\delta\left(f^{q}_{(K,\omega,a)}D^{-}(d\phi)\right),
\end{equation*}
for $\phi\in C^{\infty}(M,\mathbb{R})^{G}$.
\end{defn}

\begin{prop}
For any variation $\dot{\omega}=dd^{c}\dot{\phi}$ of $\omega$ in $\mathcal{K}_{\Omega}^{G}(M,J)$, the first variation of the $(K,q,a)$--scalar curvature is given by 
\begin{equation}\label{var-Scal(K,nu,a)}
{\boldsymbol \delta}{\rm S}_{(K,q,a)}(\dot{\phi})=-2\mathbb{L}^{g}_{(K,q,a)}(\dot{\phi})+\left(d{\rm S}_{(K,q,a)}(\omega),d\dot{\phi}\right).
\end{equation}
\end{prop}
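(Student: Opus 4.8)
The plan is to differentiate the defining expression \eqref{(K,nu,a)-scalar-curvature}, namely ${\rm S}_{(K,q,a)}(g)=f^{2}{\rm Scal}_{g}+2qf\Delta f-q(q-1)|K|^{2}_{g}$, term by term along the path $\omega_{t}=\omega+t\,dd^{c}\dot{\phi}$ in $\mathcal{K}^{G}_{\Omega}(M,J)$. Besides the standard formulas for how ${\rm Scal}_{g}$, the Laplacian $\Delta$, the pointwise norm $|K|^{2}_{g}=(df,df)$, and the volume form $v_{\omega}$ vary under a K\"ahler potential deformation, the one genuinely new ingredient is the variation $\dot{f}$ of the \emph{normalized} Killing potential of the fixed field $K$. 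I would compute $\dot f$ first: differentiating $\imath_{K}\omega=-df$ gives $\imath_{K}(dd^{c}\dot{\phi})=-d\dot f$, and since $\dot{\phi}$ is $G$-invariant we have $\mathcal{L}_{K}d^{c}\dot\phi=0$, so that $\imath_{K}dd^{c}\dot\phi=-d\big(d^{c}\dot\phi(K)\big)$ and hence $\dot f=(df,d\dot\phi)$ up to an additive constant fixed by differentiating the normalization $\int_{M}f\,v_{\omega}=a$.

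For the curvature term I would invoke the classical Calabi--LeBrun--Simanca first variation of the scalar curvature (see \cite{Gauduchon-book,LS}), $\boldsymbol\delta{\rm Scal}(\dot\phi)=-2\,\delta\delta(D^{-}d\dot\phi)+(d{\rm Scal},d\dot\phi)$, which already exhibits the $-2\delta\delta D^{-}d+(\,\cdot\,,d\dot\phi)$ shape of the target formula. Substituting everything into $\boldsymbol\delta {\rm S}_{(K,q,a)}=2f\dot f\,{\rm Scal}+f^{2}\,\boldsymbol\delta{\rm Scal}+2q\dot f\,\Delta f+2qf\,\boldsymbol\delta(\Delta f)-q(q-1)\boldsymbol\delta(|K|^{2})$, the fourth-order part is $f^{2}\big(-2\delta\delta(D^{-}d\dot\phi)\big)$. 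This is not yet $-2\mathbb{L}^{g}_{(K,q,a)}(\dot\phi)=-2f^{2-q}\delta\delta\big(f^{q}D^{-}d\dot\phi\big)$: the two differ precisely by the commutator terms produced when the weight $f^{q}$ is pulled out through the two codifferentials $\delta\delta$.

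The heart of the argument is therefore \Cref{lem: Lich-dev}, applied with $\alpha=d\dot\phi$: its right-hand side \eqref{Lich-dev} records exactly those commutator terms, in the form $(\Delta d\dot\phi,df)$, $(\Delta df,d\dot\phi)$, $(\delta d\,d\dot\phi,df)$ together with the two Poisson-type terms $(d\dot\phi,d(df,df))$ and $(df,d(d\dot\phi,df))$. I would then match these against the first- and lower-order contributions coming from $\dot f=(df,d\dot\phi)$, from $\boldsymbol\delta(\Delta f)=\boldsymbol\delta(\Delta)f+\Delta\dot f$, from $\boldsymbol\delta(|K|^{2})$, and from the transport term $(d{\rm Scal},d\dot\phi)$ in $\boldsymbol\delta{\rm Scal}$. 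The claim is that, once $d{\rm S}_{(K,q,a)}={\rm Scal}\,d(f^{2})+f^{2}d{\rm Scal}+\cdots$ is expanded, all of these assemble into the single transport term $(d{\rm S}_{(K,q,a)},d\dot\phi)$, leaving $\boldsymbol\delta{\rm S}_{(K,q,a)}=-2\mathbb{L}^{g}_{(K,q,a)}(\dot\phi)+(d{\rm S}_{(K,q,a)},d\dot\phi)$.

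I expect the main obstacle to be exactly this reorganization: one must carefully track a large number of curvature, Laplacian and gradient terms and verify that, modulo the identity \eqref{Lich-dev}, everything outside the fourth-order piece telescopes into $(d{\rm S}_{(K,q,a)},d\dot\phi)$ with no residue. A secondary point requiring care is the additive constant in $\dot f$ coming from the normalization $\int_{M}f\,v_{\omega}=a$: one must keep track of its contribution $c\,(2f\,{\rm Scal}+2q\Delta f)$ and verify that it is compatible with the stated local formula on the intended tangent space $C^{\infty}_{0}(M,\mathbb{R})^{G}$, so that only the $d\dot\phi$-dependent part survives.
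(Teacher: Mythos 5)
Your proposal is correct and follows essentially the same route as the paper's own proof: differentiate \eqref{(K,nu,a)-scalar-curvature} term by term using the standard variation formulas (including $\dot f=(df,d\dot\phi)$, whose additive constant indeed vanishes by the normalization), expand $(d{\rm S}_{(K,q,a)}(\omega),d\dot\phi)$, and recognize the difference as the right-hand side of \Cref{lem: Lich-dev} with $\alpha=d\dot\phi$, i.e.\ as $-2\mathbb{L}^{g}_{(K,q,a)}(\dot\phi)$. The reorganization you anticipate as the main obstacle is exactly what the paper carries out in its displays \eqref{8} and \eqref{9}, so no new idea is missing.
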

\begin{proof}
For a variation $\dot{\omega}=dd^{c}\dot{\phi}$ in $\mathcal{K}^{G}_{\Omega}(M,J)$, the corresponding variations of $f_{(K,\omega,a)}$, $\Delta_\omega$, ${\rm Scal}_\omega$ are given by (see e.g. ~\cite{Gauduchon-book, besse}): 
\begin{align}
\label{variations}
\begin{split}
\dot{v}_\omega=&-(\Delta_\omega\dot{\phi}) v_\omega\\
\dot{f}=&(df,d\dot{\phi})\\
\dot{\Delta}_{\omega}=&(dd^{c}\dot{\phi},dd^{c}\cdot)\\
\dot{{\rm Scal}}_\omega=&-2\mathbb{L}^{g}(\dot{\phi})+(d\,{\rm Scal}(\omega),d\dot{\phi}),
\end{split}
\end{align}
where $\mathbb{L}^{g}(\dot{\phi})=\delta\delta(D^{-}d\dot{\phi})$ is the usual Lichnerowicz operator. Then the first variation of the $(K,q,a)-$scalar curvature is given by:
\begin{eqnarray*}
{\boldsymbol \delta}{\rm S}_{(K,q,a)}(\dot{\phi})&=&-2f^{2}\mathbb{L}^{g}(\dot{\phi})+ f^{2}(d{\rm Scal}(\omega),d\dot{\phi})+{\rm Scal}(\omega)(d f^{2},d\dot{\phi})+2q f\Delta_\omega(df,d\dot{\phi})\\&&+2q(df,d\dot{\phi})\Delta_\omega f+2q f(dd^{c}f,dd^{c}\dot{\phi})-q(q-1)(df,d(df,d\dot{\phi})).
\end{eqnarray*}
By \eqref{adj} and the $G$-invariance of $\phi$ we have
\begin{equation*}
(dd^{c}\dot{\phi},dd^c f)=-\Delta(df,d\dot{\phi})+(d\Delta\dot{\phi},df).
\end{equation*}
Thus,
\begin{align}
\begin{split}\label{8}
{\boldsymbol \delta}{\rm S}_{(K,q,a)}(\dot{\phi})=& -2f^{2}\mathbb{L}^{g}(\dot{\phi})+f^{2}(d{\rm Scal}(\omega),d\dot{\phi})\\&+{\rm Scal}_\omega(d f^{2},d\dot{\phi})+2q f(df,d\Delta\dot{\phi})\\&+2q(df,d\dot{\phi})\Delta_\omega f-q(q-1)(df,d(df,d\dot{\phi})).
\end{split}
\end{align}
On the other hand we have
\begin{align}
\begin{split}\label{9}
(d {\rm S}_{(K,q,a)}(\omega),d\dot{\phi})=&f^{2}(d {\rm Scal}(\omega),d\dot{\phi})+(df^{2},d\dot{\phi}){\rm Scal}(\omega)\\&+2q(\Delta f)(df,d\dot{\phi})+2q f (d\Delta f,d\dot{\phi})\\&-q(q-1)(d\dot{\phi},d(df,df)).
\end{split}
\end{align}
By taking the difference \eqref{8}-\eqref{9} we get exactly \eqref{Lich-dev} for $\alpha=d\dot{\phi}$, which, in turn, is equal to $-2\mathbb{L}_{(K,q,a)}^{g}(\dot{\phi})$.
\end{proof}

Let $\mathfrak{h}^{K}_{\rm red}$ denote the centralizer of $K$ in $\mathfrak{h}_{\rm red}$ (i.e. the space of vector fields $H\in\mathfrak{h}_{\rm red}$ such that $[H,K]=0$) and ${\rm Aut}_{\rm red}^{K}(M,J)$ the closed connected Lie subgroup of ${\rm Aut}_{\rm red}(M,J)$ with Lie algebra $\mathfrak{h}_{\rm red}^{K}$.\\ 

\smallskip

Let $\omega\in\mathcal{K}_{\Omega}^{G}(M,J)$. To each element $\phi\in C^{\infty}_{0}(M,\mathbb{R})^{G}$ we associate a vector field $\hat{\phi}$ on $\mathcal{K}^{G}_\Omega(M,J)$, equal to $\phi$ at any point of $\mathcal{K}_{\Omega}^{G}(M,J)$. We then have $[\hat{\phi},\hat{\psi}]=0$ for any $\phi,\psi\in C^{\infty}_{0}(M,\mathbb{R})^{G}$. We will consider the natural action of ${\rm Aut}_{\rm red}^{K}(M,J)$ on $\mathcal{K}_{\Omega}^{G}(M,J)$ defined by:
\begin{equation*}
\gamma\cdot\omega=\gamma^{\star}\omega.
\end{equation*}
Consider the $1$-form $\sigma$ on $\mathcal{K}_{\Omega}^{G}(M,J)$ given by: 
\begin{equation*}
\sigma_\omega(\hat{\phi})=\int_M {\rm S}_{(K,q,a)}(\omega) \phi f_{(K,\omega,a)}^{q-2}v_\omega
\end{equation*}
where $\omega\in\mathcal{K}^{G}_\Omega(M,J)$ and $\phi\in C^{\infty}_{0}(M,\mathbb{R})^{G}$.
\begin{prop}\label{prop: d-sigma=0}
The $1$-form $\sigma$ is ${\rm Aut}_{{\rm red}}^{K}(M,J)-$invariant and we have the following expression for its first variation, 
\begin{align}
\begin{split}\label{var-sigma}
{\boldsymbol \delta}\left(\sigma(\hat{\phi})\right)_\omega(\hat{\psi})=&-2\int_M(D^-d\phi,D^-d\psi) f^{q}_{(K,\omega,a)}v_\omega\\&-\int_M{\rm S}_{(K,q,a)}(\omega)(d\psi,d\phi) f^{q-2}_{(K,\omega,a)}v_\omega.
\end{split}
\end{align}
In particular $\sigma$ is closed.
\end{prop}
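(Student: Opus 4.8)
The plan is to establish the three assertions in turn: $\mathrm{Aut}_{\rm red}^{K}(M,J)$-invariance, the first-variation formula \eqref{var-sigma}, and closedness, the last of which will follow at once from the second.

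For invariance I would argue by naturality. Fix $\gamma\in\mathrm{Aut}_{\rm red}^{K}(M,J)$; since $\gamma$ centralizes $K$, its flow commutes with that of $K$, so $\gamma^{\star}K=K$. From $\imath_{K}\omega=-df_{(K,\omega,a)}$ I then get $\imath_{K}(\gamma^{\star}\omega)=\imath_{\gamma^{\star}K}(\gamma^{\star}\omega)=\gamma^{\star}(\imath_{K}\omega)=-d(\gamma^{\star}f_{(K,\omega,a)})$, and because pullback by a diffeomorphism preserves the integral $\int_M f\,v_\omega$, the normalized positive Killing potential is natural: $f_{(K,\gamma^{\star}\omega,a)}=\gamma^{\star}f_{(K,\omega,a)}$. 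Diffeomorphism invariance of the Riemannian data likewise gives ${\rm S}_{(K,q,a)}(\gamma^{\star}\omega)=\gamma^{\star}{\rm S}_{(K,q,a)}(\omega)$ and $v_{\gamma^{\star}\omega}=\gamma^{\star}v_\omega$. As $\gamma$ is biholomorphic it commutes with $dd^{c}$, so the differential of $\omega\mapsto\gamma^{\star}\omega$ sends $\hat\phi$ to $\widehat{\gamma^{\star}\phi}$; substituting all of this into the definition of $\sigma$ and changing variables by $\gamma$ yields $(\gamma^{\star}\sigma)_\omega(\hat\phi)=\sigma_\omega(\hat\phi)$.

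For the first variation I would differentiate $\sigma(\hat\phi)$ along the variation $\dot\omega=dd^{c}\psi$ determined by $\hat\psi$, keeping $\phi$ fixed (so $\hat\phi$ contributes no $\dot\phi$-term). Using $\dot f=(df,d\psi)$ and $\dot v_\omega=-(\Delta\psi)v_\omega$ from \eqref{variations}, together with \eqref{var-Scal(K,nu,a)} applied in the direction $\psi$, and writing $f=f_{(K,\omega,a)}$, I obtain
\begin{equation*}
\begin{split}
{\boldsymbol\delta}(\sigma(\hat\phi))_\omega(\hat\psi)=\int_M\Big[&-2\mathbb{L}^{g}_{(K,q,a)}(\psi)\,\phi f^{q-2}+(d{\rm S}_{(K,q,a)},d\psi)\,\phi f^{q-2}\\
&+{\rm S}_{(K,q,a)}\,\phi\big((q-2)f^{q-3}(df,d\psi)-f^{q-2}\Delta\psi\big)\Big]v_\omega.
\end{split}
\end{equation*}
The decisive move is to absorb the weight into the Lichnerowicz operator: since $\mathbb{L}^{g}_{(K,q,a)}(\psi)\,f^{q-2}=\delta\delta(f^{q}D^{-}d\psi)$, two integrations by parts together with the orthogonality of the $J$-invariant and $J$-anti-invariant parts (which lets me replace the full Hessian $Dd\phi$ by $D^{-}d\phi$) turn the first term into the symmetric expression $-2\int_M(D^{-}d\phi,D^{-}d\psi)f^{q}v_\omega$.

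It then remains to handle the three terms not yet in symmetric form. Integrating the Laplacian term by parts gives $-\int_M{\rm S}_{(K,q,a)}\phi f^{q-2}\Delta\psi\,v_\omega=-\int_M\big(d({\rm S}_{(K,q,a)}\phi f^{q-2}),d\psi\big)v_\omega$, and expanding the differential by the Leibniz rule produces exactly the terms $(d{\rm S}_{(K,q,a)},d\psi)\phi f^{q-2}$ and $(q-2){\rm S}_{(K,q,a)}\phi f^{q-3}(df,d\psi)$ with opposite signs, so they cancel against the two remaining terms and leave only $-\int_M{\rm S}_{(K,q,a)}(d\phi,d\psi)f^{q-2}v_\omega$. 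This yields \eqref{var-sigma}. I expect this cancellation bookkeeping — tracking the weight-derived terms and verifying their total vanishing, where a single sign slip or a dropped Leibniz term would destroy the symmetry — to be the main technical obstacle. Finally, the formula \eqref{var-sigma} is manifestly symmetric in $\phi$ and $\psi$; since $[\hat\phi,\hat\psi]=0$, the invariant formula for the exterior derivative gives $d\sigma(\hat\phi,\hat\psi)={\boldsymbol\delta}(\sigma(\hat\psi))_\omega(\hat\phi)-{\boldsymbol\delta}(\sigma(\hat\phi))_\omega(\hat\psi)=0$, and as the $\hat\phi$ span the tangent space at each point, $\sigma$ is closed.
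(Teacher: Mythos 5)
Your proposal is correct and follows essentially the same route as the paper: invariance by naturality of $f_{(K,\omega,a)}$, ${\rm S}_{(K,q,a)}$ and $v_\omega$ under biholomorphisms commuting with $K$; the first variation computed from \eqref{variations} and \eqref{var-Scal(K,nu,a)}, with the weight-derived terms cancelling after integrating the Laplacian term by parts and the Lichnerowicz term symmetrized via $\mathbb{L}^{g}_{(K,q,a)}(\psi)f^{q-2}=\delta\delta(f^{q}D^{-}d\psi)$ and the pointwise orthogonality of $J$-invariant and $J$-anti-invariant parts; and closedness from the symmetry of \eqref{var-sigma} together with $[\hat\phi,\hat\psi]=0$. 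The only difference is bookkeeping order (you symmetrize the Lichnerowicz term before the cancellation, the paper after), which is immaterial.
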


\begin{proof}
Since ${\rm Aut}_{\rm red}^{K}(M,J)$ preserves the complex structure $J$ and $K$, the invariance of $\sigma$ under the action of ${\rm Aut}_{\rm red}^{K}(M,J)$ follows. Now we will calculate the first variation of the functional $\omega\mapsto\sigma_{\omega}(\phi)$. By \eqref{variations} we have for each $\psi\in C^{\infty}_{0}(M,\mathbb{R})^{G}$, 
\begin{align*}
{\boldsymbol \delta}\left(\sigma(\hat{\phi})\right)_\omega(\hat{\psi})=&\int_M\boldsymbol{\delta}{\rm S}_{(K,q,a)}(\dot{\omega})\phi f^{q-2}v_\omega+\int_M{\rm S}_{(K,q,a)}(\omega)\phi (df^{q-2},d\psi)v_\omega\\&-\int_M{\rm S}_{(K,q,a)}(\omega)\phi f^{q-2}(\Delta_\omega\psi)v_\omega\\
=&\int_M\boldsymbol{\delta}{\rm S}_{(K,q,a)}(\dot{\omega})\phi f^{q-2}v_\omega-\int_M(d{\rm S}_{(K,q,a)}(\omega),d\psi)\phi f^{q-2}v_\omega\\&-\int_M{\rm S}_{(K,q,a)}(\omega)(d\psi,d\phi) f^{q-2}v_\omega.
\end{align*}
From \eqref{var-Scal(K,nu,a)} and the above formula we readily get \eqref{var-sigma}. Thus,
\begin{equation*}
\left({\bold d}\sigma\right)_\omega\left(\hat{\phi},\hat{\psi}\right)={\boldsymbol \delta}\left(\sigma(\hat{\psi})\right)_\omega(\hat{\phi})-{\boldsymbol \delta}\left(\sigma(\hat{\phi})\right)_\omega(\hat{\psi})-\sigma_\omega([\hat{\phi},\hat{\psi}])=0.
\end{equation*} 
i.e. $\sigma$ is a closed $1-$form. 
\end{proof}

\begin{rem}
One can alternatively elaborate along the lines of \cite{AM}. For $\omega\in\mathcal{K}^{G}_\Omega(M,J)$ and $J\in\mathcal{K}^{G}(M,\omega)$ fixed, we consider the path of K\"ahler metrics $\omega_t=\omega+dd^{c}\phi_t$ with $\phi_t\in C^{\infty}_{0}(M,\mathbb{R})^{G},$ $\phi_0=0$ and $\dot{\phi}_t=\phi$. Using the equivariant Moser Lemma (see \cite[Lemma 1]{AM}) there exists a family of $G$-equivariant diffeomorphisms $\Phi_t\in {\rm Diff}^{G}_0(M)$ such that $\Phi_0=id_M$ and $\Phi_t\cdot\omega=\omega_t$. Then we have a path $J_t=\Phi_t\cdot J$ in $\mathcal{K}^{G}(M,\omega)$. Note that if $g_t=\omega_t(.,J_t.)$ then $f_{(K,\omega_t,a)}=f_{(K,\omega,a)}\circ\Phi_t$. We have,
\begin{eqnarray*}
{\boldsymbol \delta}\left(\sigma(\hat \psi)\right)_{\omega}(\hat\phi)&=&\left.\dfrac{d}{dt}\right|_{t=0}\int_{M}{\rm S}_{(K,q,a)}(\omega_{t})\psi f_{(K,\omega_{t},a)}^{q-2}v_{\omega_{t}}\\
&=&\left.\dfrac{d}{dt}\right|_{t=0}\int_{M}\Phi_{t}^{\star}\left({\rm S}_{(K,q,a)}(J_{t})\left(\psi\circ\Phi^{-1}_{t}\right)f^{q-2}_{(K,\omega,a)} v_{\omega}\right)\\
&=&\left.\dfrac{d}{dt}\right|_{t=0}\int_{M}{\rm S}_{(K,q,a)}(J_{t})\left(\psi\circ\Phi^{-1}_{t}\right)f_{(K,\omega,a)}^{q-2} v_{\omega}\\
&=&\int_{M}\left(\dot{J},DJd\psi\right)f^{q}_{(K,\omega,a)}v_{\omega}+\int_{M}{\rm S}_{(K,q,a)}(\omega)d\psi(Z)f^{q-2}_{(K,\omega,a)}v_{\omega},
\end{eqnarray*}
where we used \cite[Eq(9)]{AM} and that $Z$, $\dot{J}$ are given by: 
\begin{align*}
Z&=\left.\dfrac{d}{dt}\right|_{t=0}\Phi^{-1}_{t}\circ\Phi=-\grad_g\phi,\\
\dot{J}&=\left.\dfrac{d}{dt}\right|_{t=0}\Phi_{t}.J=-\mathcal{L}_{Z}J.
\end{align*} 
It follows that,
\begin{align*}
d\psi(Z)&=-(d\psi,d \phi),\\
\left(\dot{J},DJd\psi\right)&=-2\left(D^{-}d\phi,D^{-}d\psi\right).
\end{align*}
We thus get,
\begin{equation*}
{\boldsymbol \delta}\left(\sigma(\hat \psi)\right)_{\omega}(\hat\phi)=-2\int_{M}\left(D^{-}d\phi,D^{-}d\psi\right)f_{(K,\omega,a)}^{q}v_{\omega}-\int_{M}{\rm S}_{(K,q,a)}(\omega)(d\psi,d\phi)f^{q-2}_{(K,\omega,a)}v_{\omega}.
\end{equation*}
\end{rem}
As shown in \cite{AM}, and as it easily follows from \eqref{var-sigma}, the following expression: \begin{equation*}
c_{(\Omega,K,q,a)}:=\dfrac{\int_M {\rm S}_{(K,q,a)}(\omega)f_{(K,\omega,a)}^{q-2}v_\omega}{\int_M f_{(K,\omega,a)}^{q-2}v_\omega}.
\end{equation*} 
is a topological constant (i.e. independent of the choice of $\omega$ in the K\"ahler class $\Omega$). We consider the following $1$-form, on $\mathcal{K}^{G}_\Omega(M,J)$ given by:
\begin{equation*}
\tilde{\sigma}_\omega(\hat \phi):=\int_M\left({\rm S}_{(K,q,a)}(\omega)-c_{(\Omega,K,q,a)}\right)\phi f_{(K,\omega,a)}^{q-2}v_\omega.
\end{equation*}
\begin{lem}
The $1$-form $\tilde{\sigma}$ is closed.
\end{lem}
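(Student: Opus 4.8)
The plan is to reduce the statement to the closedness of $\sigma$ already established in the preceding Proposition. Writing $f:=f_{(K,\omega,a)}$ and introducing the auxiliary $1$-form $\tau_\omega(\hat\phi):=\int_M \phi\, f^{q-2}v_\omega$, one has the decomposition
\begin{equation*}
\tilde{\sigma}=\sigma-c_{(\Omega,K,q,a)}\,\tau .
\end{equation*}
Since $c_{(\Omega,K,q,a)}$ is a topological constant, it is independent of $\omega$ along any path in $\mathcal{K}^{G}_\Omega(M,J)$, so that $\boldsymbol\delta c_{(\Omega,K,q,a)}=0$. Given that $\sigma$ is closed by the previous Proposition, it therefore suffices to prove that the $1$-form $\tau$ is closed.

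To this end I would use the same bracket formula employed in the proof that $\sigma$ is closed, namely
\begin{equation*}
(d\tau)_\omega(\hat\phi,\hat\psi)=\boldsymbol\delta\bigl(\tau(\hat\psi)\bigr)_\omega(\hat\phi)-\boldsymbol\delta\bigl(\tau(\hat\phi)\bigr)_\omega(\hat\psi)-\tau_\omega([\hat\phi,\hat\psi]),
\end{equation*}
whose last term vanishes because $[\hat\phi,\hat\psi]=0$. Hence everything reduces to checking that the first variation $\boldsymbol\delta(\tau(\hat\psi))_\omega(\hat\phi)$ is symmetric in $\phi$ and $\psi$. Differentiating the functional $\omega\mapsto\int_M \psi\, f^{q-2}v_\omega$ along $\dot\omega=dd^{c}\phi$ and inserting the variations $\dot f=(df,d\phi)$ and $\dot v_\omega=-(\Delta_\omega\phi)v_\omega$ from \eqref{variations} gives
\begin{equation*}
\boldsymbol\delta\bigl(\tau(\hat\psi)\bigr)_\omega(\hat\phi)=\int_M \psi\,(df^{q-2},d\phi)\,v_\omega-\int_M \psi\, f^{q-2}(\Delta_\omega\phi)\,v_\omega .
\end{equation*}
Integrating the second term by parts via $\int_M \psi f^{q-2}\Delta_\omega\phi\, v_\omega=\int_M f^{q-2}(d\psi,d\phi)v_\omega+\int_M \psi(df^{q-2},d\phi)v_\omega$, the two contributions involving $(df^{q-2},d\phi)$ cancel and one is left with
\begin{equation*}
\boldsymbol\delta\bigl(\tau(\hat\psi)\bigr)_\omega(\hat\phi)=-\int_M f^{q-2}(d\phi,d\psi)\,v_\omega ,
\end{equation*}
which is manifestly symmetric in $\phi$ and $\psi$. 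Thus $d\tau=0$, and consequently $\tilde{\sigma}=\sigma-c_{(\Omega,K,q,a)}\tau$ is closed.

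I do not expect a genuine obstacle here: the core computation is a one-line integration by parts, and the conclusion is insensitive to the sign convention adopted for $\Delta_\omega$ in \eqref{variations}, since that sign cancels out of the final symmetric expression. The only point worth emphasizing is conceptual rather than technical, namely that the topological invariance of $c_{(\Omega,K,q,a)}$, equivalently the identity $\int_M\bigl({\rm S}_{(K,q,a)}(\omega)-c_{(\Omega,K,q,a)}\bigr)f^{q-2}v_\omega=0$, is exactly what guarantees that $\tilde{\sigma}$ descends to a well-defined closed $1$-form on $C^{\infty}(M,\mathbb{R})^{G}/\mathbb{R}$, in contrast to $\sigma$ itself.
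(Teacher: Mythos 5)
Your proposal is correct and follows essentially the same route as the paper: the auxiliary form you call $\tau$ is precisely the form $\theta_\omega(\hat\phi)=\int_M\phi f^{q-2}_{(K,\omega,a)}v_\omega$ introduced in the paper's proof, and both arguments compute its first variation to be $-\int_M(d\phi,d\psi)f^{q-2}v_\omega$, conclude $\theta$ is closed by symmetry (using $[\hat\phi,\hat\psi]=0$), and deduce closedness of $\tilde\sigma=\sigma-c_{(\Omega,K,q,a)}\theta$ from the previously established closedness of $\sigma$. The only difference is cosmetic: you spell out the integration by parts that the paper leaves implicit.
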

\begin{proof}
We consider the $1$-form $\theta$ defined on $\mathcal{K}^{G}_\Omega(M,J)$ by 
\begin{equation*}
\theta_\omega(\hat{\phi})=\int_M\phi f_{(K,\omega,a)}^{q-2}v_\omega.
\end{equation*}
We have using \eqref{variations},
\begin{align*}
{\boldsymbol \delta}(\theta_\omega(\hat{\phi}))(\hat{\psi})=&\int_M\phi (df^{q-2},d\psi)v_\omega-\int_M\phi f^{q-2}(\Delta_\omega\psi)v_\omega\\
=&-\int_M(d\psi,d\phi) f^{q-2}v_\omega.
\end{align*}
Thus,
\begin{equation*}
\left({\bold d}\theta\right)_\omega\left(\hat{\phi},\hat{\psi}\right)={\boldsymbol \delta}\left(\theta(\hat{\psi})\right)_\omega(\hat{\phi})-{\boldsymbol \delta}\left(\theta(\hat{\phi})\right)_\omega(\hat{\psi})-\theta_\omega([\hat{\phi},\hat{\psi}])=0,
\end{equation*} 
i.e. $\theta$ is closed. By \Cref{prop: d-sigma=0}, $\tilde{\sigma}=\sigma-c_{(\Omega,K,q,a)}\cdot\theta$ is closed.
\end{proof}
Since $\mathcal{K}_{\Omega}^{G}(M,J)$ is contractible, $\tilde{\sigma}$ is an exact form, so it admits a primitive functional.
\begin{defn}
We define the $(\Omega,K,q,a)$-Mabuchi energy 
\begin{equation*}
\mathcal{M}_{(\Omega,K,q,a)}:\mathcal{K}_{\Omega}^{G}(M,J)\to \mathbb{R}
\end{equation*}
as minus the primitive of the one form $\tilde{\sigma}$, i.e. 
\begin{equation*}
\tilde{\sigma}=-\bold{d}\mathcal{M}_{(\Omega,K,q,a)},
\end{equation*}
normalized by $\mathcal{M}_{(\Omega,K,q,a)}(\omega_0)=0$ for some base point $\omega_0\in\mathcal{K}^{G}_\Omega(M,J)$.
\end{defn}
\begin{rem}
By its very definition, the K\"ahler metrics in $\mathcal{K}^{G}_\Omega(M,J)$ of constant $(\Omega,K,q,a)$-scalar curvature are critical points of the $(\Omega,K,q,a)-$Mabuchi functional.
\end{rem}
\subsection{The $(\Omega,K,q,a)$-Futaki invariant}
For $\omega\in\mathcal{K}_{\Omega}^{G}(M,J)$ and $H\in\mathfrak{h}_{{\rm red}}^{K}$, we denote by  $h^{\prime}_{(H,\omega)}+\sqrt{-1}h_{(H,\omega)}\in C^{\infty}_0(M,\mathbb{C})$ the normalized holomorphy potantial of $H$, i.e. $h^{\prime}_{(H,\omega)}$ and $h_{(H,\omega)}$ are the normalized smooth functions such that, 
\begin{equation*}
H=\grad_{g}(h^{\prime}_{(H,\omega)})+J\grad_g (h_{(H,\omega)}).
\end{equation*}
Using the identification $T_\omega\mathcal{K}_{\Omega}^{G}(M,J)\cong C^{\infty}_{0}(M,\mathbb{R})^{G}$, the vector field $JH$ defines a vector field $\widehat{JH}$ on $\mathcal{K}_{\Omega}^{G}(M,J)$, given by:
\begin{equation*}
\omega\mapsto\mathcal{L}_{JH}\omega=dd^{c}h_{(H,\omega)},
\end{equation*}
so that $\widehat{JH}_\omega=h_{(H,\omega)}$. By the invariance of $\tilde{\sigma}$ under the ${\rm Aut}_{\rm red}^{K}(M,J)$-action and Cartan's formula we get,
\begin{equation*}
\mathcal{L}_{\widehat{JH}}\tilde{\sigma}=\bold{d}\left(\tilde{\sigma}_\omega(\widehat{JH})\right)=0.
\end{equation*}
Then $\omega\mapsto\tilde{\sigma}(\widehat{JH})$ is constant on $\mathcal{K}_{\Omega}^{G}(M,J)$. We will use the following notation, 
\begin{equation*}
\mathcal{F}_{(\Omega,K,q,a)}(H):=\tilde{\sigma}(\widehat{JH})=\int_M\left({\rm S}_{(K,q,a)}(\omega)-c_{(\Omega,K,q,a)}\right)h_{(H,\omega)} f_{(K,\omega,a)}^{q-2}v_\omega.
\end{equation*}
\begin{defn}\label{Def-Fut}
 The linear map $\mathcal{F}_{(\Omega,K,q,a)}:\mathfrak{h}_{red}^{K}\to \mathbb{R}$ will be called the $(\Omega,K,q,a)$-Futaki invariant associated to the data $(\Omega,K,q,a)$.
\end{defn}

\begin{rem}\leavevmode
\begin{enumerate}
\item \Cref{Def-Fut} is consistant with the one given in \cite{AM} for $q=-2m+1$, but it has the advantage to show that the $(\Omega,K,q,a)-$Futaki invariant extends to the whole of $\mathfrak{h}_{\rm red}^{K}$, not just ${\rm Lie}(G)$.

\item For a K\"ahler class $\Omega$ which admits $(K,q,a)$-extremal metric $\omega$, $\mathcal{F}_{\Omega,K,q,a}=0$ if and only if  $\omega$ is a $(K,q,a)$-constant scalar curvature K\"ahler metric. In fact, for $\Xi=J \grad_g\left({\rm S}_{(K,q,a)}(\omega)\right)\in\mathfrak{h}^{K}_{\rm red}$ we have
\begin{equation*}
\mathcal{F}_{(\Omega,K,q,a)}\left(\Xi\right)=\int_M\left({\rm S}_{(K,q,a)}(\omega)-c_{(\Omega,K,q,a)}\right)^{2}f^{q-2}_{(K,\omega,a)}v_\omega=0,
\end{equation*}
thus ${\rm S}_{(K,q,a)}(\omega)=c_{(\Omega,K,q,a)}$.
\end{enumerate}
\end{rem}

\section{Proof of \Cref{thm:Calabi}}\label{s4} 
In this section we shall prove \Cref{thm:Calabi} from the introduction. We thus assume that $(g,\omega)$ is an $(K,q,a)$-extremal metric on a compact, connected K\"ahler manifold $(M,J)$, and $G\subset {\rm Aut}_{\rm red}(M,J)$ the torus generated by the quasi-periodic Killing vector field $K$. 
\begin{lem}
For any $G$-invariant function $\phi\in C^{\infty}(M,\mathbb{R})^{G}$ we have
\begin{equation*}
\mathcal{L}_\Xi\phi=-2f^{2-q}_{(K,\omega,a)}\delta\delta\left(f_{(K,\omega,a)}^{q}D^{-}(d^{c}\phi)\right),
\end{equation*}
where $\mathcal{L}_\Xi$ denotes the Lie derivative along the vector field $\Xi=J\grad\left({\rm S}_{(K,q,a)}(\omega)\right)$.
\end{lem}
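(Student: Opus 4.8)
The plan is to turn the statement into a pointwise identity and to feed the $G$-invariant $1$-form $\alpha=d^{c}\phi$ into the formula \eqref{Lich-dev} already proved in \Cref{lem: Lich-dev}. First I would rewrite the left-hand side. Since $\Xi=J\grad_g({\rm S}_{(K,q,a)})$ and $(d^{c}\phi)(X)=-d\phi(JX)$, the metric dual of $d^{c}\phi$ is $J\grad_g\phi$, so that
\begin{equation*}
\mathcal{L}_\Xi\phi=d\phi(\Xi)=g\!\left(\grad_g\phi,\,J\grad_g{\rm S}_{(K,q,a)}\right)=-\big(d\,{\rm S}_{(K,q,a)},\,d^{c}\phi\big).
\end{equation*}
Hence it suffices to prove the companion identity $2f^{2-q}\delta\delta\!\left(f^{q}D^{-}d^{c}\phi\right)=\big(d\,{\rm S}_{(K,q,a)},d^{c}\phi\big)$, which is precisely the ``$d^{c}$-analogue'' of the first variation formula \eqref{var-Scal(K,nu,a)}, with $d\dot\phi$ replaced by $d^{c}\phi$.

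Next I would apply \eqref{Lich-dev} with $\alpha=d^{c}\phi$ and expand the right-hand side using ${\rm S}_{(K,q,a)}=f^{2}{\rm Scal}+2qf\Delta f-q(q-1)(df,df)$ together with $|K|^{2}=(df,df)$ and $d(f^{2})=2f\,df$. Three reductions then bring the two sides into alignment. The K\"ahler identity $\delta d^{c}\phi=0$ for functions (which holds because $DJ=0$ and the Hessian is symmetric) collapses the pair
\begin{equation*}
-2qf(\Delta d^{c}\phi,df)+2qf(\delta d\,d^{c}\phi,df)=-2qf(d\delta d^{c}\phi,df)=0;
\end{equation*}
the commutation $\Delta df=d\Delta f$ matches the terms carrying the factor $2qf$; and the term $-q(q-1)(d^{c}\phi,d(df,df))$ matches the corresponding term of $(d\,{\rm S}_{(K,q,a)},d^{c}\phi)$ directly.

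The decisive observation is that $\phi$ is $G$-invariant, so $d\phi(K)=0$; since $K^{\flat}=d^{c}f$, this says that the \emph{function} $(d^{c}\phi,df)$ vanishes identically on $M$. Consequently all the cross-terms that would obstruct a naive term-by-term matching disappear at once: $(d(f^{2}),d^{c}\phi)=2f(df,d^{c}\phi)=0$ and $(\Delta f)(df,d^{c}\phi)=0$ remove the ${\rm Scal}$- and $\Delta f$-cross-terms from $(d\,{\rm S}_{(K,q,a)},d^{c}\phi)$, while $d(d^{c}\phi,df)=0$ kills the residual term $q(q-1)(df,d(d^{c}\phi,df))$ coming from \eqref{Lich-dev}. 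After these cancellations only the genuinely second-order term survives on each side, and it is handled by the classical K\"ahler identity $2\delta\delta(D^{-}d^{c}\phi)=(d\,{\rm Scal},d^{c}\phi)$ (see \cite{Gauduchon-book}), which identifies $2f^{2}\delta\delta(D^{-}d^{c}\phi)$ with $f^{2}(d\,{\rm Scal},d^{c}\phi)$ and closes the argument.

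The main obstacle is exactly this last piece of bookkeeping: without first recording that $(d^{c}\phi,df)\equiv0$, the scalar-curvature and $\Delta f$ cross-terms do not cancel and the powers of the weight $q$ fail to match, so isolating this $G$-invariance input is the crux of the proof. Everything else I expect to be routine, namely the expansion via \eqref{Lich-dev}, the K\"ahler identities $\delta d^{c}\phi=0$ and $\Delta df=d\Delta f$, and the imported scalar-curvature identity. I note, finally, that the $(K,q,a)$-extremality of $g$ is the standing hypothesis of this section but is not actually needed for this lemma, which is a purely differential-geometric identity valid for the vector field $\Xi=J\grad_g({\rm S}_{(K,q,a)})$ of any $G$-invariant K\"ahler metric.
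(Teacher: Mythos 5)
Your proposal is correct and follows essentially the same route as the paper's proof: both rewrite the left-hand side as $\mathcal{L}_\Xi\phi=-\left(d\,{\rm S}_{(K,q,a)}(\omega),d^{c}\phi\right)$, substitute $\alpha=d^{c}\phi$ into \eqref{Lich-dev}, use the $G$-invariance of $\phi$ (via $(df,d^{c}\phi)=-d\phi(K)=0$) together with K\"ahler identities to remove the cross-terms, and conclude with the classical identity $2\delta\delta\left(D^{-}d^{c}\phi\right)=\left(d\,{\rm Scal}(\omega),d^{c}\phi\right)$ from \cite{Gauduchon-book}. The only cosmetic difference is in how the pair $-2qf(\Delta d^{c}\phi,df)+2qf(\delta d d^{c}\phi,df)$ is discarded — you collapse it to $-2qf(d\delta d^{c}\phi,df)=0$ via $\delta d^{c}\phi=0$, whereas the paper invokes $(\delta dd^{c}\phi,df)=-\mathcal{L}_K\delta^{c}d^{c}\phi=0$ using that $K$ is Killing — and your closing remark that the $(K,q,a)$-extremality of $g$ is never used in this lemma is accurate.
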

\begin{proof}
We have,
\begin{eqnarray*}
\mathcal{L}_\Xi\phi&=&-(d{\rm S}_{(K,q,a)}(\omega),d^{c}\phi)\\
&=&-f^{2}(d {\rm Scal}(\omega),d^{c}\phi)-2q f(d\Delta f,d^{c}\phi)+q(q-1)(d^{c}\phi,d(df,df)).
\end{eqnarray*} 
By taking $\alpha=d^{c}\phi$ in \eqref{Lich-dev} we get,
\begin{eqnarray*}
2f^{2-q}(D^{-}d)^{\star}f^{q}(D^{-}d^{c})\phi&=&2f^{2}(D^{-}d)^{\star}(D^{-}d^{c})\phi+2q f(d\Delta f,d^{c}\phi)\\&&-q(q-1)(d^{c}\phi,d(df,df))\\
&=&f^{2}(d {\rm S}_\omega,d^{c}\phi)+2q f(d\Delta f,d^{c}\phi)\\&&-q(q-1)(d^{c}\phi,d(df,df)),
\end{eqnarray*}
where we have used (see \cite[p.63, Eq.(1.23.15)]{Gauduchon-book}), 
\begin{equation*}
(D^{-}d)^{\star}(D^{-}d^{c})\phi=(d {\rm S}_\omega,d^{c}\phi),
\end{equation*}
and the identity, 
\begin{equation*}
(\delta dd^{c}\phi,df)=-(\delta^{c}d^{c}d\phi)(K)=-\mathcal{L}_K\delta^{c}d^{c}\phi=0.
\end{equation*}
\end{proof}
For a $1$-forme $\alpha$ we denote by $D^{2,0}\alpha$ (resp. $D^{0,2}\alpha$) the $(2,0)$-part (resp. $(0,2)$-part) of the tensor $D\alpha$. We define the $(K,q,a)$-Calabi's operators $\mathbb{L}^{g,\pm}_{(K,q,a)}$ on $C^{\infty}(M,\mathbb{C})^{G}$ by 
\begin{eqnarray*}
\mathbb{L}_{(K,q,a)}^{g,+}(F)&=&2f_{(K,\omega,a)}^{2-q}(D^{0,2}d)^{\star}f_{(K,\omega,a)}^{q}D^{0,2}dF\\
\mathbb{L}_{(K,q,a)}^{g,-}(F)&=&2f^{2-q}_{(K,\omega,a)}(D^{2,0}d)^{\star}f^{q}_{(K,\omega,a)}D^{2,0}dF.
\end{eqnarray*}
Recall that the space of hamiltonian Killing vector fields is given by (see \cite{Gauduchon-book}) 
\begin{equation*}
\mathfrak{k}_{\rm ham}=\mathfrak{h}_{\rm red}\cap \mathfrak{k}.
\end{equation*}
The following Proposition is straifhtforword (see \cite[Chapter 2]{Gauduchon-book}).
\begin{prop}\label{prop-L}\leavevmode
\begin{enumerate}
\item\label{(i)-L} Let $H=\grad_g P+J\grad_g Q$, where $P,Q$ are real valued functions with zero mean, such that $[H,K]=0$. Then $H\in\mathfrak{h}_{\rm red}$ if and only if $\mathbb{L}_{(K,q,a)}^{g,+}(P+\sqrt{-1}Q)=0$ and $P,Q\in C^{\infty}(M,\mathbb{R})^{G}$ i.e. we have
\begin{equation*}
\mathfrak{h}_{\rm red}^{K}\cong\ker(\mathbb{L}_{(K,q,a)}^{g,+})\cap C^{\infty}_{0}(M,\mathbb{C})^{G}.
\end{equation*}
\item\label{(ii)-L} For any $F\in C^{\infty}(M,\mathbb{C})^{G}$ we have, 
\begin{equation*}
\mathbb{L}_{(K\nu,a)}^{g,\pm}(F)=\mathbb{L}^{g}_{(K,q,a)}(F)\pm\frac{\sqrt{-1}}{2}\mathcal{L}_{\Xi}F.
\end{equation*}
\item\label{(iii)-L} Let $X$ be real holomorphic vector field. Then $X\in\mathfrak{k}_{\rm ham}^{K}$ if and only if there exists $h\in C^{\infty}(M,\mathbb{R})^{G}$ such that $X=J \grad_g h$ and $\mathbb{L}_{(K,q,a)}^{g}(h)=0$.  
\end{enumerate}
\end{prop}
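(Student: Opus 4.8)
The plan is to deduce all three statements from two structural facts: the pointwise orthogonal splitting of the $J$-anti-invariant Hessian into its complex bidegrees, $D^{-}d = D^{2,0}d + D^{0,2}d$, and the formal non-negativity of the weighted Calabi operators. For the latter, I note that since $(D^{0,2}d)^\star$ is the formal adjoint of $D^{0,2}d$ for the Riemannian $L^2$-product, integrating by parts twice gives, for $F\in C^{\infty}(M,\mathbb{C})^G$ (with the Hermitian extension of $\langle\cdot,\cdot\rangle_{(K,q,a)}$),
\[
\langle \mathbb{L}^{g,+}_{(K,q,a)}(F), F\rangle_{(K,q,a)} = 2\int_M f^{q}\,|D^{0,2}dF|^2\, v_\omega \ge 0,
\]
and likewise $\langle \mathbb{L}^{g,-}_{(K,q,a)}(F),F\rangle_{(K,q,a)} = 2\int_M f^q|D^{2,0}dF|^2 v_\omega$ and $\langle \mathbb{L}^{g}_{(K,q,a)}(\phi),\phi\rangle_{(K,q,a)} = \int_M f^q|D^-d\phi|^2 v_\omega$. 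Since $f=f_{(K,\omega,a)}>0$, these operators are non-negative and self-adjoint, so each kernel is cut out precisely by the pointwise vanishing of the relevant Hessian component. I would establish \eqref{(ii)-L} first, as a purely algebraic identity, and then read off \eqref{(i)-L} and \eqref{(iii)-L}.

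For \eqref{(ii)-L} I split into the sum and the difference of the two Calabi operators. Because $D^{2,0}dF$ and $D^{0,2}dF$ are pointwise orthogonal and multiplication by the scalar $f^q$ preserves the bidegree, the cross terms drop out of $\delta\delta$ and I obtain $\mathbb{L}^{g,+}_{(K,q,a)}(F) + \mathbb{L}^{g,-}_{(K,q,a)}(F) = 2f^{2-q}\delta\delta\left(f^q D^- dF\right) = 2\,\mathbb{L}^g_{(K,q,a)}(F)$. For the difference I use the Lemma immediately above, which expresses $\mathcal{L}_\Xi \phi = -2f^{2-q}\delta\delta\left(f^q D^-(d^c\phi)\right)$; extending it $\mathbb{C}$-linearly to $F$ and feeding in the bidegree relations $D^{2,0}d^cF = -\sqrt{-1}\,D^{2,0}dF$ and $D^{0,2}d^cF = +\sqrt{-1}\,D^{0,2}dF$ (which encode that $d^c=JdJ^{-1}$ acts as $\mp\sqrt{-1}$ on the $(1,0)$/$(0,1)$ parts), I get $\mathcal{L}_\Xi F = -\sqrt{-1}\,\bigl(\mathbb{L}^{g,+}_{(K,q,a)}(F) - \mathbb{L}^{g,-}_{(K,q,a)}(F)\bigr)$, i.e. $\mathbb{L}^{g,+}_{(K,q,a)}(F) - \mathbb{L}^{g,-}_{(K,q,a)}(F) = \sqrt{-1}\,\mathcal{L}_\Xi F$. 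Averaging the sum and the difference yields $\mathbb{L}^{g,\pm}_{(K,q,a)} = \mathbb{L}^g_{(K,q,a)} \pm \frac{\sqrt{-1}}{2}\mathcal{L}_\Xi$.

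For \eqref{(i)-L} I invoke the classical parametrization of $\mathfrak{h}_{\rm red}$ by holomorphy potentials: a field $H=\grad_g P + J\grad_g Q$ with zeros is real holomorphic exactly when the $(1,0)$-gradient of $F=P+\sqrt{-1}Q$ is holomorphic, equivalently when $D^{0,2}dF=0$. By the positivity displayed above, $D^{0,2}dF=0 \iff \mathbb{L}^{g,+}_{(K,q,a)}(F)=0$, which is the asserted equivalence. The $G$-invariance of $P,Q$ is forced by $[H,K]=0$: this makes $H$ invariant under the flow of $K$, hence under $G$, and averaging its normalized zero-mean holomorphy potential over $G$, together with uniqueness of the potential, gives $P,Q\in C^{\infty}(M,\mathbb{R})^G$. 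The assignment $H\mapsto F$ is then the stated isomorphism $\mathfrak{h}^K_{\rm red}\cong \ker(\mathbb{L}^{g,+}_{(K,q,a)})\cap C^{\infty}_0(M,\mathbb{C})^G$.

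For \eqref{(iii)-L}, a Hamiltonian Killing field is $X=J\grad_g h$ for a real Killing potential $h$, and $X$ being simultaneously Killing and holomorphic is equivalent to $D^-dh=0$ (for real $h$ the $(2,0)$- and $(0,2)$-Hessians are conjugate, so this is a single scalar condition). Applying the positivity of $\mathbb{L}^g_{(K,q,a)}$ to the real function $h$ gives $D^-dh=0 \iff \mathbb{L}^g_{(K,q,a)}(h)=0$, and the condition $[X,K]=0$ places $h\in C^{\infty}(M,\mathbb{R})^G$ exactly as in \eqref{(i)-L}. This is consistent with \eqref{(ii)-L}: for real $h$, vanishing of $\mathbb{L}^g_{(K,q,a)}(h)$ forces $D^{0,2}dh=0$, whence $\mathbb{L}^{g,+}_{(K,q,a)}(h)=0$ and in particular $\mathcal{L}_\Xi h=0$ drops out automatically. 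The one delicate point, and where I would be most careful, is the bidegree bookkeeping in \eqref{(ii)-L}: fixing the sign convention for $d^c=JdJ^{-1}$ and checking that it intertwines $D^{2,0}$ and $D^{0,2}$ with the opposite factors $\mp\sqrt{-1}$, so that the difference of the two Calabi operators reproduces $+\sqrt{-1}\,\mathcal{L}_\Xi$ and not its negative.
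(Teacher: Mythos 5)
Your proof is correct, but there is nothing in the paper to compare it against step by step: the paper offers no proof of this Proposition at all, declaring it ``straightforward'' with a citation to Chapter 2 of Gauduchon's lecture notes, which treats the classical unweighted case. Your write-up is precisely the weighted adaptation that this citation implies, and the three ingredients you isolate are the right ones: the pointwise bidegree splitting $D^{-}d=D^{2,0}d+D^{0,2}d$ with orthogonality of types (giving the sum identity $\mathbb{L}^{g,+}_{(K,q,a)}+\mathbb{L}^{g,-}_{(K,q,a)}=2\mathbb{L}^{g}_{(K,q,a)}$); the cancellation of the factor $f^{2-q}$ in the operators against the weight $f^{q-2}$ of $\langle\cdot,\cdot\rangle_{(K,q,a)}$, which makes the operators non-negative and self-adjoint for the weighted product and identifies each kernel with the pointwise vanishing of the corresponding Hessian component (this is what lets positivity convert $\mathbb{L}^{g,+}_{(K,q,a)}(F)=0$ into $D^{0,2}dF=0$ and hence into holomorphy of $H$, and similarly $\mathbb{L}^{g}_{(K,q,a)}(h)=0$ into $D^{-}dh=0$ in part (iii)); and the use of the Lemma immediately preceding the Proposition, with $\alpha=d^{c}\phi$, to produce the $\mathcal{L}_{\Xi}$ term in part (ii) via the relations $D^{2,0}d^{c}F=-\sqrt{-1}\,D^{2,0}dF$, $D^{0,2}d^{c}F=\sqrt{-1}\,D^{0,2}dF$. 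The sign bookkeeping you flag as the delicate point is indeed convention-dependent, but your choices are consistent with the paper's convention $(J\psi)(X_1,\dots,X_p)=(-1)^p\psi(JX_1,\dots,JX_p)$, and your handling of the $G$-invariance of the potentials via $[H,K]=0$ and uniqueness of the normalized potential is also sound.
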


\begin{thm}\label{thm:calabi} Suppose $(M, g, J)$ is a compact $(K, q, a)$-extremal K\"ahler manifold.  Then $\mathfrak{h}^K$ admits the following $\langle\cdot, \cdot\rangle_{(g,q,a)}$-orthogonal decomposition
\begin{equation}\label{h-split}
\mathfrak{h}^{K}=\mathfrak{h}^{K}_{(0)}\oplus\left(\bigoplus_{\lambda>0}\mathfrak{h}^{K}_{(\lambda)}\right),
\end{equation} 
where $\mathfrak{h}^{K}_{(0)}$ is the centralizer of $\Xi$ in $\mathfrak{h}^{K}$ and for $\lambda>0$, $\mathfrak{h}^{K}_{(\lambda)}$ denote the subspace of elements $X\in\mathfrak{h}^{K}$ such that $\mathcal{L}_{\Xi}X=\lambda JX$.\\
The subspace $\mathfrak{h}^K_{(0)}$ is a reductive complex Lie subalgebra of $\mathfrak{h}^{K}$; it contains $\mathfrak{a}$, $\mathfrak{k}_{\rm ham}^{K}$ and $J\mathfrak{k}_{\rm ham}^{K}$ and is given by the $\langle\cdot, \cdot\rangle_{(g,q,a)}$-orthogonal sum of these three spaces:
\begin{equation}\label{h0-split}
\mathfrak{h}^K_{(0)}=\mathfrak{a}\oplus\mathfrak{k}_{\rm ham}^{K}\oplus J\mathfrak{k}_{\rm ham}^{K}.
\end{equation}
Moreover, each $\mathfrak{h}^{K}_{(\lambda)}$, $\lambda>0$, is contained in the ideal $\mathfrak{h}_{\rm red}^{K}$, so that we also have the following $\langle\cdot, \cdot\rangle_{(g,q,a)}$-orthogonal decompositions:
\begin{align}\label{h-k-split}
\begin{split}
\mathfrak{h}^{K}=&\mathfrak{a}\oplus \mathfrak{h}_{\rm red}^{K},\\
\mathfrak{k}^{K}=&\mathfrak{a}\oplus \mathfrak{k}_{\rm ham}^{K}.
\end{split}
\end{align}
\end{thm}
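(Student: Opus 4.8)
The plan is to diagonalise the operator $\mathcal{L}_\Xi$ on the complex vector space $(\mathfrak{h}^K,J)$ and to read off the three asserted decompositions from its spectrum, the positivity of the weighted Lichnerowicz operator $\mathbb{L}^{g}_{(K,q,a)}$ being the decisive ingredient.

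First I would record that $\Xi=J\grad_g({\rm S}_{(K,q,a)}(\omega))$ is a Killing field commuting with $K$ (since ${\rm S}_{(K,q,a)}(\omega)$ is $G$-invariant), so $\Xi\in\mathfrak{k}^K_{\rm ham}\subset\mathfrak{h}^K$ and its flow generates a torus of isometries preserving $J$ and the weighted volume $f^{q-2}v_\omega$. Consequently $\mathcal{L}_\Xi=[\Xi,\cdot]$ maps $\mathfrak{h}^K$ to itself, commutes with $J$ (because $\Xi$ is real holomorphic) and is skew-Hermitian for $\langle\cdot,\cdot\rangle_{(K,q,a)}$; viewing $(\mathfrak{h}^K,J)$ as a complex vector space it is therefore $\mathbb{C}$-linear and semisimple with purely imaginary eigenvalues. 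This gives the orthogonal eigenspace decomposition \eqref{h-split}, where $\mathfrak{h}^K_{(\lambda)}=\{X:\mathcal{L}_\Xi X=\lambda JX\}$ (each such space being $J$-stable since $\mathcal{L}_\Xi(JX)=\lambda J(JX)$) and $\mathfrak{h}^K_{(0)}=\ker\mathcal{L}_\Xi$ is the centralizer of $\Xi$.

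The heart of the argument is to show that only non-negative $\lambda$ occur and that the nonzero eigenspaces lie in $\mathfrak{h}^K_{\rm red}$. The parallel (zero-free) holomorphic fields $\mathfrak{a}$ are central, hence annihilated by $\mathcal{L}_\Xi$ and contained in $\mathfrak{h}^K_{(0)}$, so for $\lambda\neq0$ one has $\mathfrak{h}^K_{(\lambda)}\subset\mathfrak{h}^K_{\rm red}$, and by Proposition~\ref{prop-L}\,\ref{(i)-L} each $X\in\mathfrak{h}^K_{(\lambda)}$ corresponds to a complex potential $F\in\ker\mathbb{L}^{g,+}_{(K,q,a)}\cap C^{\infty}_0(M,\mathbb{C})^{G}$ with $\mathcal{L}_\Xi F=\sqrt{-1}\lambda F$. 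Feeding this into Proposition~\ref{prop-L}\,\ref{(ii)-L} gives $\mathbb{L}^{g}_{(K,q,a)}F=\tfrac{\lambda}{2}F$; since a standard double integration by parts yields $\langle\mathbb{L}^{g}_{(K,q,a)}\phi,\phi\rangle_{(K,q,a)}=\int_M f^{q}|D^{-}d\phi|^2 v_\omega\geq0$, the operator $\mathbb{L}^{g}_{(K,q,a)}$ is non-negative and self-adjoint, forcing $\lambda\geq0$. This positivity is exactly what excludes negative eigenvalues and confines the sum in \eqref{h-split} to $\lambda>0$, giving also $\mathfrak{h}^K_{(\lambda)}\subset\mathfrak{h}^K_{\rm red}$.

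It then remains to analyse $\mathfrak{h}^K_{(0)}$. On the kernel $\mathcal{L}_\Xi=0$, Proposition~\ref{prop-L}\,\ref{(ii)-L} gives $\mathbb{L}^{g,+}_{(K,q,a)}=\mathbb{L}^{g}_{(K,q,a)}$, a real operator, so a potential $F=P+\sqrt{-1}Q$ of an element of $\mathfrak{h}^K_{(0)}\cap\mathfrak{h}^K_{\rm red}$ satisfies $\mathbb{L}^{g}_{(K,q,a)}P=\mathbb{L}^{g}_{(K,q,a)}Q=0$; by Proposition~\ref{prop-L}\,\ref{(iii)-L} this means $J\grad_g P,\,J\grad_g Q\in\mathfrak{k}^K_{\rm ham}$, whence $\mathfrak{h}^K_{(0)}\cap\mathfrak{h}^K_{\rm red}=\mathfrak{k}^K_{\rm ham}\oplus J\mathfrak{k}^K_{\rm ham}$ and, adding the central $\mathfrak{a}$, the decomposition \eqref{h0-split}. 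Reductivity follows because $\mathfrak{k}^K_{\rm ham}\oplus J\mathfrak{k}^K_{\rm ham}$ is the $J$-complexification of the Lie algebra $\mathfrak{k}^K_{\rm ham}$ of a compact group, and a complex Lie algebra admitting a compact real form is reductive, while $\mathfrak{a}$ is a central abelian summand. Finally, combining $\mathfrak{a}\subset\mathfrak{h}^K_{(0)}$, the inclusions $\mathfrak{h}^K_{(\lambda)}\subset\mathfrak{h}^K_{\rm red}$ for $\lambda>0$, and $\mathfrak{h}^K_{(0)}\cap\mathfrak{h}^K_{\rm red}=\mathfrak{k}^K_{\rm ham}\oplus J\mathfrak{k}^K_{\rm ham}$ yields the splittings \eqref{h-k-split}. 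I expect the main obstacle to be the careful verification that $\mathcal{L}_\Xi$ is simultaneously $\mathbb{C}$-linear and skew-Hermitian on the weighted space and that it commutes with $\mathbb{L}^{g,\pm}_{(K,q,a)}$, so that the spectral decomposition is compatible with $\ker\mathbb{L}^{g,+}_{(K,q,a)}\cong\mathfrak{h}^K_{\rm red}$; once this functional-analytic bookkeeping and the non-negativity of $\mathbb{L}^{g}_{(K,q,a)}$ are in place, the algebraic conclusions follow formally.
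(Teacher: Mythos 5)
There is a genuine gap, and it sits exactly where the analytic content of the theorem lives. Your argument becomes circular at two points: first when you write that, since $\mathfrak{a}\subset\mathfrak{h}^{K}_{(0)}$, ``for $\lambda\neq 0$ one has $\mathfrak{h}^{K}_{(\lambda)}\subset\mathfrak{h}^{K}_{\rm red}$'', and again when you obtain \eqref{h0-split} by ``adding the central $\mathfrak{a}$''. Both steps tacitly assume $\mathfrak{h}^{K}=\mathfrak{a}+\mathfrak{h}^{K}_{\rm red}$, which is precisely the decomposition \eqref{h-k-split} to be proved. On a general compact K\"ahler manifold this fails: any $X\in\mathfrak{h}^{K}$ has a Hodge-type decomposition $X=X_H+\grad_g P+J\grad_g Q$, where $X_H$ is dual to the harmonic part $\xi_H$ of $\xi=X^{\flat}$, but $X_H$ is \emph{not} holomorphic in general, so $X$ need not be a sum of an element of $\mathfrak{a}$ (the holomorphic fields with harmonic dual $1$-form) and an element of $\mathfrak{h}^{K}_{\rm red}$. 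Proving that $X_H$ \emph{is} holomorphic is exactly where the $(K,q,a)$-extremal hypothesis enters in the paper: one applies \Cref{lem: Lich-dev} with $\alpha=\xi_H$, uses $(\xi_H,df)=0$ and the fact that $\Xi$ is Killing (so that the resulting term $J\mathcal{L}_{\Xi}\xi_H$ vanishes) to get $\delta\delta\left(f^{q}D^{-}\xi_H\right)=0$, and then, since $D^{-}\xi=0$ for the holomorphic field $X$, concludes ${\rm Re}\left(\mathbb{L}^{g,+}_{(K,q,a)}(P+\sqrt{-1}Q)\right)=0$ (and the imaginary part by running the same argument on $JX$); \Cref{prop-L}\ref{(i)-L} then shows $\grad_g P+J\grad_g Q\in\mathfrak{h}^{K}_{\rm red}$, hence $X_H\in\mathfrak{a}$. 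Your proposal never touches the harmonic part, so \eqref{h-k-split} is unproven, and for \eqref{h0-split} you only obtain the easy inclusion $\mathfrak{a}\oplus\mathfrak{k}^{K}_{\rm ham}\oplus J\mathfrak{k}^{K}_{\rm ham}\subset\mathfrak{h}^{K}_{(0)}$, not the equality.

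Two parts of your argument are sound and worth keeping. The containment $\mathfrak{h}^{K}_{(\lambda)}\subset\mathfrak{h}^{K}_{\rm red}$ for $\lambda\neq 0$ does admit a non-circular proof, though not the one you gave: $\mathfrak{h}_{\rm red}$ is a $J$-stable ideal of $\mathfrak{h}$ containing $\Xi$ (which is hamiltonian), so $\mathcal{L}_{\Xi}X=[\Xi,X]\in\mathfrak{h}^{K}_{\rm red}$ for \emph{every} $X\in\mathfrak{h}^{K}$, and an eigenvector with $\lambda\neq0$ satisfies $X=-\lambda^{-1}J\left(\mathcal{L}_{\Xi}X\right)\in\mathfrak{h}^{K}_{\rm red}$. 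With that repair, your positivity mechanism is correct and is the same as the paper's: a potential $F\in\ker\mathbb{L}^{g,+}_{(K,q,a)}$ with $\mathcal{L}_{\Xi}F=\sqrt{-1}\lambda F$ satisfies $\mathbb{L}^{g}_{(K,q,a)}F=\tfrac{\lambda}{2}F$, and non-negativity of $\mathbb{L}^{g}_{(K,q,a)}$ forces $\lambda\geq 0$ (the paper phrases this as semi-positivity of $\mathbb{L}^{g,-}_{(K,q,a)}$, which equals $-\sqrt{-1}\mathcal{L}_{\Xi}$ on $\ker\mathbb{L}^{g,+}_{(K,q,a)}$ by \Cref{prop-L}\ref{(ii)-L} --- the same thing up to a factor). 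Likewise your identification of $\mathfrak{h}^{K}_{(0)}\cap\mathfrak{h}^{K}_{\rm red}$ with $\mathfrak{k}^{K}_{\rm ham}\oplus J\mathfrak{k}^{K}_{\rm ham}$ via \Cref{prop-L}\ref{(ii)-L} and \ref{(iii)-L} matches the paper's last step. But these repairs do not substitute for the harmonic-part computation: without it you get neither \eqref{h-k-split} nor the equality in \eqref{h0-split}, hence not the full theorem.
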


\begin{proof}
Let $X=X_H+\grad_g P+J\grad_g Q\in\mathfrak{h}^{K}$, where $X_H$ is the dual of the harmonic part of $\xi=X^{\flat}$ denoted $\xi_H$, and $P,Q\in C^{\infty}(M,\mathbb{R})$ with zero mean value. Since $K=J \grad_g f$ is Killing we have 
\begin{equation*}
\mathcal{L}_K P=\mathcal{L}_KQ=0.
\end{equation*} 
By \eqref{Lich-dev} in Lemma \ref{lem: Lich-dev} we have
\begin{eqnarray*}
2f^{2-q}(D^{-}d)^{\star}f^{q}D^{-}\xi_H&=&2f^{2}(D^{-}d)^{\star}D^{-}\xi_H\\
&=&f^{2}(d{\rm Scal}(\omega),\xi_H)+2q f(d\Delta f,\xi_H)\\&&-q(q-1)(\xi_H,d(df,df))\\
&=&J\mathcal{L}_{\Xi}\xi_H=0,
\end{eqnarray*}
where we have used $(\xi_H,df)=0$ and the fact that $\Xi$ is a Killing vector field. It follows that 
\begin{equation*}
0=f^{2-q}(D^{-}d)^{\star}f^{q}D^{-}\xi=f^{2-q}(D^{-}d)^{\star}f^{q}D^{-}(dP+d^{c}Q)={\rm Re}\left(\mathbb{L}_{(K,q,a)}(P+\sqrt{-1}Q)\right).
\end{equation*}
Starting from $JX$ instead of $X$ we similarly get  
\begin{equation*}
{\rm Im}\left(\mathbb{L}_{(K,q,a)}^{+}(P+\sqrt{-1}Q)\right)=0.
\end{equation*}
It follows that $\mathbb{L}_{(K,q,a)}^{+}(P+\sqrt{-1}Q)=0$, then by \Cref{prop-L}\ref{(i)-L} we have that $X_H$ and $\grad_gP+J\grad_g Q$ are real holomorphic vector fields, which proves \eqref{h-k-split} (for the decomposition of $\mathfrak{k}^{K}$  we use the fact that $\mathfrak{k}_{\rm ham}:=\mathfrak{k}\cap\mathfrak{h}_{\rm red}$ and $\mathfrak{k}\cap \mathfrak{a}=\mathfrak{a}$).\\
Since $\Xi$ is Killing and commutes with $K$, the operators $\mathbb{L}_{(K,q,a)}^{g,\pm}$ commute. Then $\mathbb{L}_{(K,q,a)}^{g,-}$ acts on $\mathfrak{h}_{red}^{K}$ and by \Cref{prop-L}\ref{(ii)-L} this action is given by $-\sqrt{-1}\mathcal{L}_{\Xi}$. Since $\mathbb{L}^{-}_{(K,q,a)}$ is $\langle\cdot,\cdot\rangle_{(g,q,a)}$-self-adjoint and semi-positive, $\mathfrak{h}_{\rm red}^{K}$ splits as 
\begin{equation*}
\mathfrak{h}_{\rm red}^{K}=\mathfrak{h}^{K}_{{\rm red},(0)}\oplus\left(\bigoplus_{\lambda>0}\mathfrak{h}^{K}_{(\lambda)}\right),
\end{equation*}
where $\mathfrak{h}^{K}_{{\rm red},(0)}$ is the kernel of $\mathcal{L}_{\Xi}$ in $\mathfrak{h}_{\rm red}^{K}$ whereas, for each $\lambda>0$, $\mathfrak{h}^{K}_{(\lambda)}$ is the subspace of elements $X\in\mathfrak{h}^{K}$ such that $\mathcal{L}_{\Xi}X=\lambda JX$. Using \eqref{h-k-split} we get \eqref{h-split}(Notice that $\mathfrak{h}^{K}_{(\lambda)}=\mathfrak{h}^{K}_{{\rm red},(\lambda)}$ since $\Xi$ is Killing and commutes with $K$).\\
We have $\mathfrak{a}\oplus\mathfrak{k}_{\rm ham}^{K}\oplus J\mathfrak{k}_{\rm ham}^{K}\subset \mathfrak{h}^K_{(0)}$. By \Cref{prop-L}\ref{(ii)-L} the restriction of $\mathcal{L}_{\Xi}$ to $\ker\left(\mathbb{L}_{(K,q,a)}^{g,+}\right)\cap C^{\infty}_{0}(M,\mathbb{C})^{G}$ coincides with the restriction of $\mathbb{L}_{(K,q,a)}^{g}$ to the same space. Then, using \Cref{prop-L} \ref{(iii)-L}, we obtain the converse inclusion, which proves  \eqref{h0-split}.
\end{proof}
Now we are in position to give a proof for \Cref{thm:Calabi}.
\begin{proof}[\bf Proof of \Cref{thm:Calabi}]
This is done as in the case where $(G=\{1\},q=0,a=0)$ (see \cite{Gauduchon-book,calabi}). Let $\mathfrak{s}$ be the Lie algebra of a connected, compact Lie subgroup, $S\subset {\rm Aut}^{K}_0(M,J)$ containing ${\rm Isom}^{K}_0(M,g)$. Suppose, for a contradiction, that there exists $X\in\mathfrak{s}$ that doesn't belong to $\mathfrak{k}^{K}$. By \Cref{thm:calabi}, (see \eqref{h-split}, \eqref{h-k-split} and \eqref{h0-split}) we have the splitting
\begin{equation*}
\mathfrak{h}^K=\mathfrak{k}^{K}\oplus J\mathfrak{k}_{\rm ham}^{K}\oplus\left(\bigoplus_{\lambda>0}\mathfrak{h}_{(\lambda)}^{K}\right),
\end{equation*}
then we can assume that $X\in J\mathfrak{k}_{\rm ham}^{K}\oplus\left(\bigoplus_{\lambda>0}\mathfrak{h}_{(\lambda)}^{K}\right)$. Let $X=X_0+\sum_{\lambda>0}X_\lambda$ be the corresponding decomposition of $X$, then for any positive integer $r$ we have 
\begin{equation*}
\left(\mathcal{L}_\Xi\right)^{2r}X=-\sum_{\lambda>0}\lambda^{2r}X_\lambda \in \mathfrak{s}.
\end{equation*}
It follows that each component $X_\lambda$ of $X$ is in $\mathfrak{s}$. We can therefore assume that $X\in\mathfrak{s}_{\lambda}:=\mathfrak{s}\cap \mathfrak{h}^{K}_{(\lambda)}$ or $X\in J\mathfrak{k}_{\rm ham}^{K}\subset \mathfrak{s} _0$. Suppose that $X\in \mathfrak{s}_{\lambda}$ for some $\lambda>0$. Let $B$ denote the Killing form of $\mathfrak{s}$. Since $S$ is a compact Lie group, $B$ is semi-negative and it's kernel coincides with the center of $\mathfrak{s}$. On the other hand $X$ belongs to the kernel of $B$, indeed for any $Y\in \mathfrak{s}_{\lambda_1}$ and $Z\in \mathfrak{s}_{\lambda_2}$, by Jacobi identity  we can easily show that $[X,[Y,Z]]\in\mathfrak{s}_{\lambda+\lambda_1+\lambda_2}\neq \mathfrak{s}_{\lambda_2}$ then $\mathfrak{s}_{\lambda+\lambda_1+\lambda_2}=\{0\}$ and by consequence $[X,[Y,Z]]=0$. It follows that for any $Y\in \mathfrak{s}$ we have $B(X,Y)=0$. Hence $X$ belongs to the center of $\mathfrak{s}$, but we have $\Xi\in \mathfrak{k}^{K}\subset \mathfrak{s}$ and $[X,\Xi]=-\lambda JX\neq 0$, a contradiction.\\
It follows that $X\in J\mathfrak{k}_{\rm ham}^{K}$. Then $X=\grad_g(P)$ for some real function $P$. By the hypothesis the flow $\Phi^{X}_t$ of $X$ is contained in a compact connected subgroup of ${\rm Aut}^{K}_0(M,J)$. It follows that $X$ is quasi-periodic with a flow closure in ${\rm Aut}^{K}_0(M,J)$ given by a torus $T^{k}$ of dimension $k\geq1$. Note that $k\neq1$ since a gradient vector field does not admit any non-trivial closed integral curve, as $\frac{d}{dt}P\left(\Phi^{X}_t(x)\right)=|X|^{2}_{\Phi^{X}_t(x)}\geq 0$. It follows that $k>1$. Let $x\in M$ such that $X_x\neq 0$. We have that $P(\Phi^{X}_t(x))$ is an increasing function of $t$, so that $P(\Phi^{X}_t(x))-P(x)>c$, for $t>1$, where $c>0$. But by density of $\Phi^{X}_t$ in the torus $T^{k}$, $\Phi^{X}_t$ meets any small neighborhood $U$ of $x$, which is a contradiction. We conclude that $\mathfrak{s}=\mathfrak{k}^{K}$. \\

\smallskip

If the $(K,q,a)$-scalar curvature is constant then by \Cref{thm:calabi}, $\mathfrak{h}^{K}$ splits as 
\begin{equation*}
\mathfrak{h}^{K}=\mathfrak{a}\oplus\mathfrak{k}_{\rm ham}^{K}\oplus J\mathfrak{k}_{\rm ham}^{K},
\end{equation*}
since $\mathfrak{h}_{(\lambda)}^{K}=\{0\}$. In particular $\mathfrak{h}^{K}$ is a reductive complex Lie algebra.
\end{proof}
We have the following immediate consequences of \Cref{thm:Calabi}. 
\begin{cor}\label{cor:T-inv}
Any $(K,q,a)$-extremal metric on a compact K\"ahler manifold $(M,J)$ belongs to $\mathcal{K}^{\T}_\Omega(M,J)$ for some maximal torus $\T$ of ${\rm Aut}_{\rm red}(M,J)$ such that $K\in{\rm Lie}(\T)$.
\end{cor}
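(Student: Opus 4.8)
The plan is to build the torus $\T$ out of $K$ inside the compact isometries and then upgrade its maximality from the $K$-centralizer to the full reduced group. Since $K$ is a quasi-periodic hamiltonian Killing field we have $K\in\mathfrak{k}_{\rm ham}^{K}$, so the torus $G$ it generates sits in the compact connected subgroup $\mathcal{H}\subset{\rm Aut}_{\rm red}^{K}(M,J)$ whose Lie algebra is $\mathfrak{k}_{\rm ham}^{K}$. By \Cref{thm:calabi} the algebra $\mathfrak{k}_{\rm ham}^{K}$ is a maximal compact subalgebra of $\mathfrak{h}_{\rm red}^{K}$: indeed \eqref{h-k-split} writes $\mathfrak{k}^{K}=\mathfrak{a}\oplus\mathfrak{k}_{\rm ham}^{K}$ and $\mathfrak{h}^{K}=\mathfrak{a}\oplus\mathfrak{h}_{\rm red}^{K}$, while \Cref{thm:Calabi} identifies $\mathfrak{k}^{K}$ as the maximal compact subalgebra of $\mathfrak{h}^{K}$, so after removing the common central compact factor $\mathfrak{a}$ of parallel fields one gets that $\mathcal{H}$ is a maximal compact connected subgroup of ${\rm Aut}_{\rm red}^{K}(M,J)$.

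First I would enlarge $G$ to a maximal torus $\T$ of the compact connected group $\mathcal{H}$; then $K\in{\rm Lie}(\T)$ since $G\subseteq\T$. As $\T\subset\mathcal{H}\subset{\rm Aut}_{\rm red}(M,J)$ acts on $(M,g,J)$ by holomorphic isometries (recall $\mathfrak{k}^{K}\subseteq\mathfrak{h}^{K}$ by \eqref{h-k-split}), it preserves both $g$ and $J$, hence also $\omega=g(J\cdot,\cdot)$; thus $\omega\in\mathcal{K}_{\Omega}^{\T}(M,J)$, which already yields the asserted $\T$-invariance together with $K\in{\rm Lie}(\T)$. It remains to verify that $\T$ is a \emph{maximal} torus of ${\rm Aut}_{\rm red}(M,J)$. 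Because $\mathcal{H}$ is maximal compact in ${\rm Aut}_{\rm red}^{K}(M,J)$, a maximal torus of $\mathcal{H}$ is a maximal torus of ${\rm Aut}_{\rm red}^{K}(M,J)$: every compact torus of a connected Lie group is conjugate into a maximal compact subgroup and all such subgroups are conjugate, so the maximal tori of the group all have dimension equal to the rank of $\mathcal{H}$ and are realized inside $\mathcal{H}$. Hence $\T$ is a maximal torus of ${\rm Aut}_{\rm red}^{K}(M,J)$.

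Finally I would promote maximality to ${\rm Aut}_{\rm red}(M,J)$ using $K\in{\rm Lie}(\T)$. If $\T'$ is any torus of ${\rm Aut}_{\rm red}(M,J)$ with $\T\subseteq\T'$, then $\T'$ is abelian and contains $K$, so every element of $\T'$ commutes with $K$; being connected, $\T'$ therefore lies in ${\rm Aut}_{\rm red}^{K}(M,J)$, where $\T$ is already maximal, forcing $\T'=\T$. This proves that $\T$ is a maximal torus of ${\rm Aut}_{\rm red}(M,J)$ with $K\in{\rm Lie}(\T)$ and $\omega\in\mathcal{K}_{\Omega}^{\T}(M,J)$, as claimed. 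I expect the only delicate point to be the first paragraph: \Cref{thm:Calabi} is stated for the full $K$-centralizers ${\rm Isom}_{0}^{K}\subset{\rm Aut}_{0}^{K}$, which carry the fixed-point-free Albanese directions $\mathfrak{a}$, so one must carefully descend to the reduced groups by quotienting out this common central compact factor before invoking the standard ``maximal torus of a maximal compact subgroup'' fact.
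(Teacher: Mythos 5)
Your overall route is the natural unwinding of what the paper itself leaves implicit (it states \Cref{cor:T-inv} as an immediate consequence of \Cref{thm:Calabi}, with no written proof), and two of your three steps are solid: the invariance of $\omega$ under a torus of holomorphic isometries is clear, and your final centralizer trick --- any torus $\T'\supseteq\T$ is abelian and has $K\in{\rm Lie}(\T')$, hence lies in ${\rm Aut}_{\rm red}^{K}(M,J)$, where $\T$ is already maximal --- is exactly the right mechanism for promoting maximality to ${\rm Aut}_{\rm red}(M,J)$. The genuine gap is the first paragraph, on which everything else is conditioned. You justify the claim that $\mathcal{H}$ (the compact connected group with Lie algebra $\mathfrak{k}_{\rm ham}^{K}$) is a maximal compact connected subgroup of ${\rm Aut}_{\rm red}^{K}(M,J)$ by ``removing the common central compact factor $\mathfrak{a}$''. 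But \eqref{h-k-split} is only a $\langle\cdot,\cdot\rangle_{(g,q,a)}$-orthogonal splitting of vector spaces: $\mathfrak{h}_{\rm red}^{K}$ is an ideal of $\mathfrak{h}^{K}$, and $\mathfrak{a}$ is central in the compact algebra $\mathfrak{k}^{K}$, but nothing in \Cref{thm:calabi} makes $\mathfrak{a}$ central in $\mathfrak{h}^{K}$: the brackets $[\mathfrak{a},\mathfrak{h}_{(\lambda)}^{K}]$, $\lambda>0$, are not controlled (Jacobi only gives $[\mathfrak{a},\mathfrak{h}_{(\lambda)}^{K}]\subseteq\mathfrak{h}_{(\lambda)}^{K}$). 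Consequently, if $S$ is a compact connected subgroup with $\mathcal{H}\subseteq S\subseteq{\rm Aut}_{\rm red}^{K}(M,J)$, its Lie algebra is not yet known to avoid the $\mathfrak{h}_{(\lambda)}^{K}$'s, so the torus $A$ generated by $\mathfrak{a}$ is not known to commute with $S$; hence $A\cdot S$ need not be a group, and you cannot enlarge $S$ to a compact connected subgroup containing ${\rm Isom}_{0}^{K}(M,g)$ in order to invoke \Cref{thm:Calabi}. The ``quotient out the central factor'' step is therefore an unproved assertion, not a routine reduction.

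The gap is repairable with ingredients you already invoke, assembled differently. Either: (i) apply the Cartan--Iwasawa--Malcev conjugacy theorem in the connected group ${\rm Aut}_{0}^{K}(M,J)$, where \Cref{thm:Calabi} makes ${\rm Isom}_{0}^{K}(M,g)$ the maximal compact subgroup up to conjugacy; given $S$ as above, some $\gamma\in{\rm Aut}_{0}^{K}(M,J)$ conjugates $S$ into ${\rm Isom}_{0}^{K}(M,g)$, and since $\gamma$ preserves $K$ while $\mathfrak{h}_{\rm red}$ is an ideal (so ${\rm Aut}_{\rm red}(M,J)$ is normal in ${\rm Aut}_{0}(M,J)$), one gets $\gamma S\gamma^{-1}\subseteq{\rm Isom}_{0}^{K}(M,g)\cap{\rm Aut}_{\rm red}^{K}(M,J)$, whose identity component is precisely $\mathcal{H}$ because $\mathfrak{k}^{K}\cap\mathfrak{h}_{\rm red}^{K}=\mathfrak{k}_{\rm ham}^{K}$ by \eqref{h-k-split}; a dimension count then forces $S=\mathcal{H}$. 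Or: (ii) bypass the group-theoretic claim entirely and rerun the proof of \Cref{thm:Calabi} inside $\mathfrak{h}_{\rm red}^{K}=\mathfrak{k}_{\rm ham}^{K}\oplus J\mathfrak{k}_{\rm ham}^{K}\oplus\bigl(\bigoplus_{\lambda>0}\mathfrak{h}_{(\lambda)}^{K}\bigr)$, where no $\mathfrak{a}$ occurs: since $\Xi\in\mathfrak{k}_{\rm ham}^{K}\subseteq{\rm Lie}(S)$, the iterated $(\mathcal{L}_{\Xi})^{2r}$ argument, the Killing-form argument, and the gradient-flow argument of the paper apply verbatim and yield ${\rm Lie}(S)\subseteq\mathfrak{k}_{\rm ham}^{K}$. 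With either repair, the rest of your proof goes through unchanged.
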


\begin{cor}\label{unique}
Let $g$ and $\tilde{g}$ be two $(K,q,a)-$extremal metrics on $(M,J)$. Then there is $\Phi\in {\rm Aut}^{K}_0(M,J)$ such that ${\rm Isom}^{K}_0(M,g)={\rm Isom}^{K}_0(M,\Phi^{\star}\tilde{g})$. Furthermore if $(M,J)$ is a toric manifold and $g$ and $\tilde{g}$ are two $(K,q,a)$-extremal metrics in the same K\"ahler class $\Omega$, then they are isometric (see \cite{AM}).
\end{cor}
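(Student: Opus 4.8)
The plan is to deduce both assertions from \Cref{thm:Calabi} together with two classical conjugacy theorems in Lie theory. For the first assertion I would argue as follows. Since $g$ and $\tilde g$ are both $(K,q,a)$-extremal, \Cref{thm:Calabi} tells us that ${\rm Isom}^K_0(M,g)$ and ${\rm Isom}^K_0(M,\tilde g)$ are two maximal compact connected subgroups of the connected Lie group ${\rm Aut}^K_0(M,J)$. By the Cartan--Iwasawa--Malcev--Mostow theorem, any two maximal compact subgroups of a connected Lie group are conjugate, so there is $\Phi'\in {\rm Aut}^K_0(M,J)$ with
\[
{\rm Isom}^K_0(M,g)=\Phi'\,{\rm Isom}^K_0(M,\tilde g)\,(\Phi')^{-1}.
\]
On the other hand, for any diffeomorphism $\Phi$ one has the elementary equivalence $\psi^\star\tilde g=\tilde g \iff (\Phi^{-1}\psi\Phi)^\star(\Phi^\star\tilde g)=\Phi^\star\tilde g$, which, together with the fact that $\Phi\in {\rm Aut}^K_0(M,J)$ centralizes $K$ and preserves $J$, yields ${\rm Isom}^K_0(M,\Phi^\star\tilde g)=\Phi^{-1}\,{\rm Isom}^K_0(M,\tilde g)\,\Phi$. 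Setting $\Phi:=(\Phi')^{-1}$ then gives ${\rm Isom}^K_0(M,\Phi^\star\tilde g)={\rm Isom}^K_0(M,g)$, which is the first claim.

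For the toric case I would first use the first assertion to replace $\tilde g$ by $\Phi^\star\tilde g$; since $\Phi$ lies in the identity component ${\rm Aut}^K_0(M,J)$ it acts trivially on $H^2_{\rm dR}(M,\R)$, so $\Phi^\star\tilde g$ is again a $(K,q,a)$-extremal metric in the same class $\Omega$, and now ${\rm Isom}^K_0(M,g)={\rm Isom}^K_0(M,\Phi^\star\tilde g)=:\mathcal K$. By \Cref{cor:T-inv}, $g$ and $\Phi^\star\tilde g$ are invariant under maximal tori $\T_1,\T_2$ of ${\rm Aut}_{\rm red}(M,J)$ containing $K$; since $(M,J)$ is toric these tori are $m$-dimensional, and, being contained in $\mathcal K$, they are maximal tori of the compact connected group $\mathcal K$. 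As all maximal tori of a compact connected Lie group are conjugate, there is $\psi\in\mathcal K$ with $\psi\,\T_2\,\psi^{-1}=\T_1=:\T$. Because $\psi\in {\rm Isom}^K_0(M,g)$ fixes $g$ and lies in ${\rm Aut}^K_0(M,J)$, the metric $\psi^\star\Phi^\star\tilde g$ is again $(K,q,a)$-extremal, lies in $\Omega$, and is now $\T$-invariant, exactly as is $g$.

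At this point both $g$ and $\psi^\star\Phi^\star\tilde g$ are $\T$-invariant $(K,q,a)$-extremal K\"ahler metrics in the fixed class $\Omega$ on the toric manifold $(M,J)$, with $\T$ the maximal $m$-dimensional torus. The uniqueness theorem of Apostolov--Maschler \cite[Theorem 3]{AM}, which gives uniqueness of the $\T$-invariant $(K,q,a)$-extremal metric in a fixed K\"ahler class on a toric manifold, then forces $g=\psi^\star\Phi^\star\tilde g$. Hence $\tilde g$ is isometric to $g$ via $\psi\circ\Phi$, as desired.

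The conceptual ingredients---\Cref{thm:Calabi}, the conjugacy of maximal compact subgroups, the conjugacy of maximal tori, and the toric uniqueness of \cite{AM}---are all in hand, so I expect the main obstacle to be purely bookkeeping: one must check at each conjugation that the relevant structure survives, namely that the conjugating automorphisms centralize $K$ and preserve $J$ (so that the $K$-superscript and the $(K,q,a)$-extremality are preserved), and that, lying in the identity component, they act trivially on cohomology so that all the metrics remain in the single class $\Omega$. The only genuinely external input is the precise form of \cite[Theorem 3]{AM}, which must be strong enough to conclude equality (not merely isometry) of the two $\T$-invariant metrics once they share the same maximal torus and K\"ahler class.
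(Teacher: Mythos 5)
Your proposal is correct and is essentially the paper's own (unwritten) argument: the corollary is stated as an immediate consequence of \Cref{thm:Calabi}, and the intended route—conjugacy of the two maximal compact connected subgroups ${\rm Isom}^K_0(M,g)$, ${\rm Isom}^K_0(M,\tilde g)$ inside the connected Lie group ${\rm Aut}^K_0(M,J)$, then reduction to a common maximal torus and appeal to the toric uniqueness theorem \cite[Theorem 3]{AM}—is exactly what you carry out. Two small remarks: your closing worry is moot, since the corollary only claims $g$ and $\tilde g$ are \emph{isometric}, so uniqueness up to ($\T$-equivariant) isometry in \cite[Theorem 3]{AM} suffices without needing exact equality of the two $\T$-invariant representatives; and with your convention $\psi\,\T_2\,\psi^{-1}=\T_1$ the metric invariant under $\T_1$ is $(\psi^{-1})^{\star}\Phi^{\star}\tilde g$ rather than $\psi^{\star}\Phi^{\star}\tilde g$, a trivially fixable bookkeeping slip.
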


\begin{proof}[\bf Proof of \Cref{c:LeBrun}]
This follows from \Cref{cor:T-inv} and \cite[Proposition 6]{AM}
\end{proof}

\begin{proof}[\bf Proof of \Cref{c:Hirzebruch}]
We have the following exact sequence (see \cite[Proposition 1.3]{ACGT3}):
\begin{equation*}
0\rightarrow \mathfrak{h}_B(M)\rightarrow \mathfrak{h}(M)\rightarrow \mathfrak{h}(B)\rightarrow 0
\end{equation*}
where $B=\mathbb{F}_n$ and $\mathfrak{h}_B(M)$ denote the Lie algebra of holomorphic vector fields on $M$ which are tangent to the fibers of $\pi$. The proof of \cite[Proposition 1.3]{ACGT3} also shows that, 
\begin{equation*}
0\rightarrow \mathfrak{h}^{K}_B(M)\rightarrow \mathfrak{h}^{K}(M)\rightarrow \mathfrak{h}(B)\rightarrow 0
\end{equation*}
where $\mathfrak{h}^{K}_B(M)={\rm span}_{\C}\{K,JK\}$ is the abelian sub-algebra generated by the vector fields $K$, $JK$. If $M$ admits a K\"ahler metric of constant $(bK, q, a)$-scalar curvature, then $\mathfrak{h}^{K}(M)$ must be reductive by \Cref{thm:Calabi}. As $\mathfrak{h}^{K}_B(M)$ is in the center of $\mathfrak{h}^{K}(M)$, it would follow that $\mathfrak{h}(B)$ is reductive, which is not the case for $B=\mathbb{F}_n$ (see e.g. \cite{besse}). It follows that $M$ admits no K\"ahler metric of constant $(bK, q, a)$-scalar curvature.
\end{proof}

\section{The $(K,q,a)$-extremal K{\"a}hler metrics relatively to a maximal torus $\mathbb{T}$, and $(\mathbb{T},K,q,a)$-extremal vector field}\label{s5}
Using Corollaries \ref{cor:T-inv} and \ref{unique}, we assume from now on that $\mathbb{T}$ is a fixed  maximal torus in ${\rm Aut}_{\rm red}(M,J)$ and $K\in{\rm Lie}(\T)$. We denote by $\Pi^{\mathbb{T}}_g$ the orthogonal projection with respect to the $L^{2}-$scalar product 
\begin{equation*}
\langle\phi,\psi\rangle_{(g,q,a)}:=\int_M\phi\psi f_{(K,\omega,a)}^{q-2}v_\omega
\end{equation*} 
defined on the Hilbert space $L^{2}_\mathbb{T}(M,\mathbb{R})$ onto the space $P^{\mathbb{T}}_g(M,\R)$ of Killing potentials of the elements of ${\rm Lie}(\T)$ relatively to $g$ which is isomorphic to $\mathbb{R}\oplus {\rm Lie}(\T)$. Then we have the following decomposition of the $(K,q,a)$-scalar curvature,
\begin{equation*}
{\rm S}_{(K,q,a)}(\omega)={\rm S}_{(K,q,a)}^{\T}(\omega)+\Pi^{\mathbb{T}}_g \left({\rm S}_{(K,q,a)}(\omega)\right),
\end{equation*}
  
\begin{defn}
We call ${\rm S}_{(K,q,a)}^{\mathbb{T}}(\omega)$ the reduced $(K,q,a)$-scalar curvature with respect to $\mathbb{T}$. We say that $\omega\in\mathcal{K}_{\Omega}^\mathbb{T}(M,J)$ is $(K,q,a)$-extremal relatively $\mathbb{T}$ if ${\rm S}_{(K,q,a)}^{\mathbb{T}}(\omega)$ is identically zero.
\end{defn} 

\begin{rem}
Notice that by \Cref{cor:T-inv}, any $(K,q,a)$-extremal metric is extremal relatively to the maximal torus of ${\rm Aut}_{\rm red}(M,J)$ containing $K$.   
\end{rem}

Following \cite[Proposition 4.11.1]{Gauduchon-book} we have,
\begin{defn}
For $X,Y\in\mathfrak{h}_{\rm red}$ with normalized complex potentials $F^{X}_\omega,F^{Y}_\omega$ we define the $(\Omega,K,q,a)$-Futaki-Mabuchi bilinear by the following expression
\begin{equation*}
\mathcal{B}_{(\Omega,K,q,a)}(X,Y):=\int_M F^{X}_\omega F^{Y}_\omega f_{(K,\omega,a)}^{q-2}v_\omega
\end{equation*}
which is independent from the choice of $\omega\in\mathcal{K}^{\mathbb{T}}_\Omega(M,J)$.
\end{defn}
We denote by $Z_{\omega}^{\mathbb{T}}(K,q,a)$ the vector field given by
\begin{equation*}
Z_{\omega}^{\mathbb{T}}(K,q,a):=J\grad_g\left(\Pi^{\mathbb{T}}_g\left({\rm S}_{(K,q,a)}(\omega)\right)\right).
\end{equation*}   
Then for all $H\in {\rm Lie}(\T)$, we have
\begin{equation}\label{F=B}
\mathcal{F}_{(\Omega,K,q,a)}(H)=-\mathcal{B}_{(\Omega,K,q,a)}\left(H,Z_{\omega}^{\T}(K,q,a)\right).
\end{equation}
From its very definition, the restriction of $\mathcal{B}_{(\Omega,K,q,a)}$ to ${\rm Lie}(\T)$ is negative definite. Then $Z_{\omega}^{\mathbb{T}}(K,q,a)$ is well-defined by the above expression, so it is an element of ${\rm Lie}(\mathbb{T})$, independent of the choice of $\omega\in\mathcal{K}^{\mathbb{T}}_\Omega(M,J)$.  
\begin{defn}
We call $Z^{\mathbb{T}}(\Omega,K,q,a)\in {\rm Lie}(\T)$ the $(\Omega,K,q,a)$-extremal vector field.
\end{defn}
Now we consider the $1$-form $\zeta^{\mathbb{T}}$ defined on $\mathcal{K}^{\mathbb{T}}_{\Omega}(M,J)$ by,
\begin{equation*}
\zeta^{\mathbb{T}}_\omega(\hat{\phi})=\int_M \Pi_{g}^{\mathbb{T}}\left({\rm S}_{(K,q,a)}(\omega)\right)\phi f^{q-2}_{(K,\omega,a)}v_\omega.
\end{equation*}
\begin{lem}\label{d-zeta}
The $1$-form $\zeta^{\mathbb{T}}$ is closed.
\end{lem}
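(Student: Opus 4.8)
The plan is to reduce $\zeta^{\mathbb{T}}$ to a $1$-form built from the normalized Killing potential of a \emph{fixed} vector field, and then to show that its first variation is symmetric, which is exactly the statement $\mathbf{d}\zeta^{\mathbb{T}}=0$. First I would use the decomposition ${\rm S}_{(K,q,a)}(\omega)={\rm S}^{\mathbb{T}}_{(K,q,a)}(\omega)+\Pi^{\mathbb{T}}_g({\rm S}_{(K,q,a)}(\omega))$ together with the very definition of the $(\Omega,K,q,a)$-extremal vector field: the mean value of $\Pi^{\mathbb{T}}_g({\rm S}_{(K,q,a)}(\omega))$ with respect to $f^{q-2}_{(K,\omega,a)}v_\omega$ is the topological constant $c_{(\Omega,K,q,a)}$, while its $f^{q-2}_{(K,\omega,a)}v_\omega$-mean-zero part is the normalized Killing potential $h_{(Z,\omega)}$ of $Z:=Z^{\mathbb{T}}(\Omega,K,q,a)$, the latter being an element of ${\rm Lie}(\mathbb{T})$ \emph{independent of $\omega$}. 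Since every $\phi\in C^{\infty}_0(M,\mathbb{R})^{\mathbb{T}}$ has zero mean with respect to $f^{q-2}_{(K,\omega,a)}v_\omega$, the constant drops out and
\[
\zeta^{\mathbb{T}}_\omega(\hat{\phi})=\int_M h_{(Z,\omega)}\,\phi\, f^{q-2}_{(K,\omega,a)}v_\omega .
\]
This is of the same shape as the $1$-form $\theta$, but with the fixed potential $h_{(Z,\omega)}$ inserted into the integrand.

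Next I would compute the variation of $h_{(Z,\omega)}$ under $\dot\omega=dd^{c}\psi$, $\psi\in C^{\infty}_0(M,\mathbb{R})^{\mathbb{T}}$. Differentiating the defining relation $\imath_{Z}\omega=-dh_{(Z,\omega)}$ (with $Z$ held fixed) gives $\imath_{Z}dd^{c}\psi=-d\dot{h}$, and Cartan's formula together with the fact that $Z$ is real holomorphic (so $\mathcal{L}_Z$ commutes with $J$ and $d$) yields $\imath_{Z}dd^{c}\psi=d^{c}\big(d\psi(Z)\big)-d\big((d\psi,dh_{(Z,\omega)})\big)$. Here the crucial point is that $\psi$ is $\mathbb{T}$-invariant and $Z\in{\rm Lie}(\mathbb{T})$, so $d\psi(Z)=\mathcal{L}_Z\psi=0$ and the first term drops. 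Hence, modulo a constant fixed by the normalization (which I may discard since it is paired against the mean-zero function $\phi$),
\[
\dot{h}_{(Z,\omega)}=(dh_{(Z,\omega)},d\psi).
\]
This variation formula, and in particular the use of $\mathbb{T}$-invariance of the deformation direction, is the one nontrivial ingredient.

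Finally I would combine this with the variations of $f_{(K,\omega,a)}$ and $v_\omega$ from \eqref{variations}, exactly as in the computation of $\mathbf{d}\theta$. Writing $h=h_{(Z,\omega)}$ and integrating by parts, the variation of the measure $f^{q-2}_{(K,\omega,a)}v_\omega$ contributes $-\int_M f^{q-2}_{(K,\omega,a)}\big(h(d\phi,d\psi)+\phi(dh,d\psi)\big)v_\omega$, while the variation of $h$ contributes $\int_M (dh,d\psi)\,\phi\, f^{q-2}_{(K,\omega,a)}v_\omega$; the latter cancels the $\phi(dh,d\psi)$ term, leaving
\[
\boldsymbol{\delta}\big(\zeta^{\mathbb{T}}(\hat{\phi})\big)_\omega(\hat{\psi})=-\int_M f^{q-2}_{(K,\omega,a)}\,h_{(Z,\omega)}\,(d\phi,d\psi)\,v_\omega ,
\]
which is manifestly symmetric in $\phi$ and $\psi$. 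Since $[\hat{\phi},\hat{\psi}]=0$, this gives $(\mathbf{d}\zeta^{\mathbb{T}})_\omega(\hat{\phi},\hat{\psi})=\boldsymbol{\delta}(\zeta^{\mathbb{T}}(\hat{\psi}))_\omega(\hat{\phi})-\boldsymbol{\delta}(\zeta^{\mathbb{T}}(\hat{\phi}))_\omega(\hat{\psi})=0$, so $\zeta^{\mathbb{T}}$ is closed. The only delicate steps are the identification in the first paragraph (which relies on $Z$ being $\omega$-independent) and the variation formula in the second; once these are in place the closedness is immediate.
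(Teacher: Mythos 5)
Your overall strategy coincides with the paper's: both proofs rest on the fact that $\Pi^{\mathbb{T}}_g\left({\rm S}_{(K,q,a)}(\omega)\right)$ is, up to an additive constant, the normalized Killing potential of the \emph{fixed}, $\omega$-independent vector field $Z=Z^{\mathbb{T}}(\Omega,K,q,a)\in{\rm Lie}(\mathbb{T})$, then on the variation formula $\dot{h}_{(Z,\omega)}=(dh_{(Z,\omega)},d\psi)$ for the potential of a fixed field, and finally on the symmetry in $(\phi,\psi)$ of the resulting first variation. Your re-derivation of the variation formula via Cartan's formula, holomorphy of $Z$ and $\mathbb{T}$-invariance of $\psi$ is correct (the paper simply invokes the analogue of \eqref{variations}), and your integration by parts in the last step is also correct.

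There is, however, one imprecision you should repair. The identity $\zeta^{\mathbb{T}}_\omega(\hat{\phi})=\int_M h_{(Z,\omega)}\,\phi\,f^{q-2}_{(K,\omega,a)}v_\omega$ holds \emph{at the point} $\omega$, where $\phi$ has zero mean, but it is not an identity of functions on $\mathcal{K}^{\mathbb{T}}_\Omega(M,J)$: the constant vector field $\hat{\phi}$ does not have zero mean with respect to the varying measure $f^{q-2}_{(K,\omega',a)}v_{\omega'}$ at nearby points $\omega'$. In other words, as $1$-forms one has $\zeta^{\mathbb{T}}=\tilde{\zeta}+c_{(\Omega,K,q,a)}\,\theta$, where $\tilde{\zeta}_\omega(\hat{\phi}):=\int_M h_{(Z,\omega)}\phi f^{q-2}_{(K,\omega,a)}v_\omega$ and $\theta$ is the $1$-form from the lemma preceding the definition of the Mabuchi energy. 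What you actually differentiate is $\tilde{\zeta}$, so your displayed formula for ${\boldsymbol \delta}\left(\zeta^{\mathbb{T}}(\hat{\phi})\right)_\omega(\hat{\psi})$ is missing the term $-c_{(\Omega,K,q,a)}\int_M(d\phi,d\psi)f^{q-2}_{(K,\omega,a)}v_\omega$; the paper's formula accordingly has the full projection $\Pi^{\mathbb{T}}_g\left({\rm S}_{(K,q,a)}(\omega)\right)$, not its mean-zero part, in the integrand. The gap is harmless, for two equivalent reasons: the missing term is symmetric in $\phi$ and $\psi$, hence drops out of the antisymmetrization defining ${\bold d}\zeta^{\mathbb{T}}$; alternatively, $\theta$ is closed by that earlier lemma, so closedness of $\tilde{\zeta}$ (which you proved) implies closedness of $\zeta^{\mathbb{T}}$. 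Adding either one-line remark makes your proof complete.
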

\begin{proof}
To simplify notations we denote $z(\omega):=\Pi_{g}^{\mathbb{T}}\left({\rm S}_{(K,q,a)}(\omega)\right)$.  Then  $Z^{\mathbb{T}}(\Omega,K,q,a)=J \grad_g z(\omega)$. For a variation $\dot{\omega}=dd^{c}\phi$ in $\mathcal{K}^{\mathbb{T}}_{\Omega}(M,J)$, using \eqref{variations} we have
\begin{equation*}
\dot{z}(\phi)=(d\phi,dz(\omega))_{\omega}
\end{equation*}
and therefore, 
\begin{eqnarray*}
{\boldsymbol \delta}\left(\zeta^{\mathbb{T}}(\hat{\psi})\right)_{\omega}(\hat{\phi})&=&\int_M (d\phi,dz(\omega))_{\omega}\psi f^{q-2}v_\omega+\int_M z(\omega)\psi(d\phi,df^{q-2})v_{\omega}\\&&-\int_M z(\omega)\psi\Delta_\omega\phi f^{q-2}v_\omega\\
&=&-\int_M z(\omega)(d\phi,d\psi)f^{q-2}v_{\omega}.
\end{eqnarray*}
It follows that,
\begin{equation*}
\left(\bold{d}\zeta^{\mathbb{T}}\right)_\omega(\hat{\phi},\hat{\psi})={\boldsymbol \delta}\left(\zeta^{\mathbb{T}}(\hat{\psi})\right)_{\omega}(\hat{\phi})-{\boldsymbol \delta}\left(\zeta^{\mathbb{T}}(\hat{\phi})\right)_{\omega}(\hat{\psi})=0.
\end{equation*}
\end{proof}
Now we consider the $1$-form $\sigma^{\mathbb{T}}$ on $\mathcal{K}^{\mathbb{T}}_{\Omega}(M,J)$ given by
\begin{equation*}
\sigma^{\mathbb{T}}:=\sigma -\zeta^\mathbb{T}
\end{equation*}
which is a closed $1$-form by virtue of \Cref{prop: d-sigma=0} and \Cref{d-zeta}.
\begin{defn}
The relative Mabuchi energy $\mathcal{M}^{\mathbb{T}}_{(\Omega,K,q,a)}$ is defined by 
\begin{equation*}
\sigma^{\mathbb{T}}=-\bold{d}\mathcal{M}^{\mathbb{T}}_{(\Omega,K,q,a)},
\end{equation*}
where the primitive $\mathcal{M}^{\mathbb{T}}_{(\Omega,K,q,a)}$ is normalized by requiring $\mathcal{M}^{\mathbb{T}}_{(\Omega,K,q,a)}(\omega_0)=0$ for some base point $\omega_0\in\mathcal{K}^{\mathbb{T}}_{\Omega}(M,J)$.
\end{defn}
\begin{rem} 
By its very definition, the critical points of the relative Mabuchi energy are the $\mathbb{T}$--invariant $(K,q,a)-$extremal metrics.
\end{rem}

\section{Proof of \Cref{thm:LeBrun-Simanca}}\label{s6}
Let $(M,J)$ be a compact K\"ahler manifold. We fix $K\in\mathfrak{h}_{\rm red}$, and $q\in\mathbb{R}$. Suppose that $(g,\omega)$ is a $(K,q,a)$-extremal K\"ahler metric on $M$ with $\Omega=[\omega]$. Without loss of generality, by \Cref{cor:T-inv}, we can assume that $(g,\omega)$ is invariant under the action of a maximal torus $\T\sub {\rm Aut}_{\rm red}(M,J)$. Let $\alpha$ be $\T-$invariant $g$-harmonic $(1,1)$-form. We take $(\omega,\alpha)=0$ to avoid trivial deformations of the form $\alpha=\lambda\omega$. We denote by
\begin{equation*}
\omega_{t,\phi}:=\omega+t\alpha+dd^{c}\phi,
\end{equation*}
a $\mathbb{T}-$invariant deformations of $\omega$ for $t\in\mathbb{R}$ and $\phi\in C^{\infty}(M,\mathbb{R})^{\mathbb{T}}$. We consider the following map, 
\begin{equation*}
{\rm S}:\mathbb{R}^{3}\times C^{\infty}(M,\mathbb{R})^{\mathbb{T}}\rightarrow C^{\infty}(M,\mathbb{R})^{\mathbb{T}}
\end{equation*}
defined by,
\begin{equation*}
{\rm S}(s,t,u,\phi):={\rm S}_{(K+u H,q,a+s)}(\omega_{t,\phi}),
\end{equation*}
so that ${\rm S}(0)={\rm S}_{(K,q,a)}(\omega):={\rm S}$. We denote $f_{(s,t,u,\phi)}=f_{(K+u H,\omega_{t,\phi},a+s)}>0$ the hamiltonian function of $K+u H$ with respect to $\omega_{t,\phi}$, with normalization constant $a+s$, so that $f_{0}=f_{(K,\omega,a)}:=f$. We take $k>n$ such that the Sobolev space $L^{2}_{k}(M,\R)^{\mathbb{T}}$ form an algebra for the usual multiplication of functions, embadded in $C^{4}(M,\mathbb{R})^{\mathbb{T}}$. Then ${\rm S}$ defines a map
\begin{equation*}
{\rm S}:\mathbb{R}^{3}\times L^{2}_{k+4}(M,\R)^{\mathbb{T}}\rightarrow L^{2}_{k}(M,\R)^{\mathbb{T}},
\end{equation*}
and we have:
\begin{lem}\label{DS}
The map ${\rm S}$ is $C^{1}$ with Fr{\'e}chet derivative in $0$ given by
\begin{equation*}
{\rm D}_{0}{\rm S}=\begin{pmatrix} 
A & B & C & D
\end{pmatrix}
\end{equation*}
with,
\begin{align*}
A=&2f {\rm Scal}(\omega)+2q\Delta(f),\\
B=&-2\left(\rho_{(K,q,a)}(\omega),\alpha\right)+2\lambda f {\rm S}+2q\Delta (f)+2q f\Delta\lambda-2q(q-1)(d\lambda,df),\\
C=&2 f_{(H,\omega)}f{\rm Scal}(\omega)-2q(q-1)(K,H)+2q\left[f_{(H,\omega)}\Delta(f)+f\Delta(f_{(H,\omega)})\right],\\
D(\dot{\phi})=&-2\mathbb{L}_{(K,q,a)}^{g}(\phi)+\left(d{\rm S},d\dot{\phi}\right),
\end{align*}
where $(.,.)$, $\grad$, $\Delta$, the green operator $\mathbb{G}$ are calculated with respect to $\omega$, and $\lambda:=-\mathbb{G}(\alpha,dd^{c}f)$.
\end{lem}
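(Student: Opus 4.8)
The plan is to establish the $C^1$-regularity of ${\rm S}$ separately, and then to compute the four blocks of ${\rm D}_0{\rm S}$ as the directional derivatives $A=\partial_s{\rm S}\big|_{0}$, $B=\partial_t{\rm S}\big|_{0}$, $C=\partial_u{\rm S}\big|_{0}$ and $D={\rm D}_\phi{\rm S}\big|_{0}$. In each case the recipe is the same: the perturbation of the parameter induces an infinitesimal variation of the data $(\dot\omega,\dot f,\dot K)$, and I would substitute this variation into the first variation of the defining expression \eqref{(K,nu,a)-scalar-curvature}, reading off the derivatives of ${\rm Scal}$, $\Delta$, $v_\omega$ and $|K|^2$ from the list \eqref{variations}. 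The only genuinely new computation is the $t$-block, since there $\dot\omega=\alpha$ leaves the K\"ahler class.

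The $\phi$- and $u$-directions are immediate. Along $\phi$ we have $\dot\omega=dd^c\dot\phi$ with $s=t=u=0$, i.e.\ a variation inside the fixed class $\Omega$; this is exactly the situation of \eqref{var-Scal(K,nu,a)}, which gives $D(\dot\phi)=-2\mathbb{L}^{g}_{(K,q,a)}(\dot\phi)+(d{\rm S},d\dot\phi)$. Along $u$ the form $\omega$ (hence ${\rm Scal}$, $\Delta$ and $v_\omega$) is unchanged, and only $f$ and $|K|^2$ move: from $\imath_{K+uH}\omega=-df_{(K+uH,\omega,a)}$ one gets $\partial_u f\big|_{u=0}=f_{(H,\omega)}$ and $\partial_u\!\left(|K+uH|^2\right)\!\big|_{u=0}=2(K,H)$, and substituting $\dot f=f_{(H,\omega)}$, $\dot K=H$ into \eqref{(K,nu,a)-scalar-curvature} yields $C$ at once. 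The $s$-direction is equally soft: changing $a$ to $a+s$ with $\omega,K$ fixed only shifts the additive normalization constant of $f$, so $\dot f$ is a constant; the terms in which $f$ is differentiated then drop out (as $\Delta\dot f=0$) and only $2f{\rm Scal}+2q\Delta f=A$ survives.

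For the $t$-block I would first identify the induced variation of the Killing potential. Differentiating $\imath_K\omega_{t}=-df_{(K,\omega_t,a)}$ gives $d\dot f=-\imath_K\alpha$; this is solvable because $\mathcal{L}_K\alpha=d\imath_K\alpha=0$ (as $\alpha$ is $\T$-invariant and closed) and $\imath_K\alpha$ is exact for $K$ hamiltonian, and together with the normalization it pins down $\dot f=\lambda=-\mathbb{G}(\alpha,dd^cf)$. Next one substitutes $\dot\omega=\alpha$, $\dot f=\lambda$ into \eqref{(K,nu,a)-scalar-curvature}. The key simplification is that for a harmonic $(1,1)$-form $\alpha$ the trace $\Lambda\alpha=(\alpha,\omega)$ is itself harmonic, hence \emph{constant}, so the Ricci form is infinitesimally unchanged, $\dot\rho=-\tfrac12dd^c\Lambda\alpha=0$, and $\dot{{\rm Scal}}=-2(\rho,\alpha)$. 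Feeding this, together with $\dot v_\omega=(\alpha,\omega)v_\omega$ and the variation of $\Delta f$, into the three summands of \eqref{(K,nu,a)-scalar-curvature} and collecting the curvature contributions into the $(K,q,a)$-Ricci form $\rho_{(K,q,a)}$ (the curvature terms assemble into the single pairing $-2(\rho_{(K,q,a)},\alpha)$, the remaining explicit terms being lower order in $\lambda,f$), one obtains $B$.

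It remains to justify that ${\rm S}$ is genuinely $C^1$ as a map $\R^{3}\times L^2_{k+4}(M,\R)^{\T}\to L^2_{k}(M,\R)^{\T}$ with total derivative $(A,B,C,D)$ at $0$. This reduces to the $C^1$-dependence on $(s,t,u,\phi)$ of the metric $g_{t,\phi}$, its curvature and Laplacian, the Green operator, and the normalized potential $f_{(s,t,u,\phi)}$; the potential is controlled by applying the implicit function theorem (via Hodge theory) to the linear conditions $df_{(\cdot)}=-\imath_{K+uH}\omega_{t,\phi}$ and $\int_Mf_{(\cdot)}v_{\omega_{t,\phi}}=a+s$, and the embedding $L^2_{k+4}\hookrightarrow C^4$ together with the algebra property of $L^2_{k+4}$ guarantees that the nonlinear expression \eqref{(K,nu,a)-scalar-curvature} lands in $L^2_{k}$ and depends continuously differentiably on all variables. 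I expect the main obstacle to be precisely the $t$-block: it is the only out-of-class deformation, and it forces one to use the full (rather than reduced) first-variation formulas and a careful integration by parts---exploiting the harmonicity of $\alpha$---to repackage the curvature terms as $-2(\rho_{(K,q,a)},\alpha)$; by comparison the $C^1$ bookkeeping, while necessary, is routine.
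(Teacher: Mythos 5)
Your proposal is correct and follows essentially the same route as the paper: the blocks $A$, $C$, $D$ are obtained by direct differentiation exactly as you describe ($D$ being just \eqref{var-Scal(K,nu,a)}), and the $t$-block rests on the same two facts you isolate, namely $\left.\partial_t f\right|_{0}=\lambda=-\mathbb{G}(\alpha,dd^{c}f)$ (the paper gets this from \cite[Lemma 5.2.4]{Gauduchon-book}, you from solving $d\dot f=-\imath_K\alpha$ by Hodge theory) and the vanishing of $\dot\rho$ for the harmonic, trace-free $\alpha$. The only cosmetic difference is that the paper first rewrites ${\rm S}_{(K,q,a)}(\omega)=2\Lambda_\omega\left(\rho_{(K,q,a)}(\omega)\right)$ with $\rho_{(K,q,a)}=f^{2}\rho-qf\,dd^{c}f-\tfrac{1}{2}q(q-1)df\wedge d^{c}f$ and differentiates this identity, so that the pairing $-2\left(\rho_{(K,q,a)},\alpha\right)$ falls out of the variation of $\Lambda_{\omega_t}$, whereas you differentiate \eqref{(K,nu,a)-scalar-curvature} term by term and regroup afterwards; the two computations are identical in content.
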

\begin{proof}
The expressions of $A$, $C$ and $D$ are straightforward. For the partial derivative with respect to $t$ we have ${\rm S}_{(K,q,a)}(\omega)=2\Lambda_\omega\left(\rho_{(K,q,a)}(\omega)\right)$
where (see \cite{ACGL})
\begin{equation*}
\rho_{(K,q,a)}(\omega)=f^{2}\rho(\omega)-q f dd^{c}f-\frac{1}{2}q(q-1)df\wedge d^{c}f,
\end{equation*}
with $\rho(\omega)$ is the Ricci form of $(g,\omega)$. By taking $X=-JK$ in \cite[Lemma 5.2.4]{Gauduchon-book} we get,
\begin{equation*}
\left.\frac{\partial}{\partial t}\right|_{0}f_{(s,t,u,\phi)}=-\mathbb{G}\left(\delta(\alpha(K,))\right)=-\mathbb{G}\left(\alpha,dd^{c}f_{(K,\omega,a)}\right)=\lambda,
\end{equation*} 
and we have $\left.\frac{\partial}{\partial t}\right|_{0}\rho(\omega_{t,\phi})=0$ since we assumed $(\omega,\alpha)=0$. Thus,
\begin{eqnarray*}
B&=&2\left(\left.\frac{\partial}{\partial t}\right|_{0}\Lambda_{\omega_{t,\phi}}\right)\rho_{(K,q,a)}(\omega)+2\Lambda_\omega\left(\left.\frac{\partial}{\partial t}\right|_{0}\rho_{(K,q,a+s)}(\omega_{t,\phi})\right)\\
&=&-2\left(\rho_{(K,q,a)}(\omega),\alpha\right)+2\lambda f {\rm S}+2q\Delta f+2q f\Delta\lambda-2q(q-1)(d\lambda,df).
\end{eqnarray*}
\end{proof}
We consider the following maps,
\begin{eqnarray*}
\mathcal{F}(s,t,u)&:=&\mathcal{F}_{([\omega+t\alpha],K+uH,q,a+s)},\\
\mathcal{B}(s,t,u)&:=&\mathcal{B}_{([\omega+t\alpha],K+uH,q,a+s)},\\
Z(s,t,u)&:=&Z^{\T}([\omega+t\alpha],K+uH,q,a+s).
\end{eqnarray*}
\begin{lem}\label{d-F-B-t}
The $t$-derivative of the character $\mathcal{F}(s,t,u)$ and the bilinear for $\mathcal{B}(s,t,u)$ in the point $(s,t,u)=0$ is given by
\begin{align}\label{d-F-t}
\begin{split}
\left.\frac{\partial}{\partial t}\right|_{0}\mathcal{F}(s,t,u)(X)=&\left\langle h_{(X,\omega)},B\right\rangle_{(g,q,a)}+\left\langle h_{(X,\omega)},(q-2)\lambda f{\rm S}\right\rangle_{(g,q,a)}\\&-\left\langle h_{(X,\omega)},f^{2-q}.\left(\alpha,dd^{c}\mathbb{G}\left(f^{q-2}{\rm S}\right)\right)\right\rangle_{(g,q,a)},
\end{split}
\end{align}

\begin{align}\label{d-B-t} 
\begin{split}
\left.\frac{\partial}{\partial t}\right|_{0}\mathcal{B}(s,t,u)(X,Y)=&\left\langle\alpha,f^{2-q}h_{(X,\omega)}dd^{c}\mathbb{G}\left(h_{(Y,\omega)}f^{q-2}\right)\right\rangle_{(g,q,a)}\\
&+\left\langle\alpha,f^{2-q}\mathbb{G}\left(h_{(X,\omega)}f^{q-2}\right)dd^{c}h_{(Y,\omega)}\right\rangle_{(g,q,a)}\\&+(q-2)\left\langle\alpha,f^{2-q}\mathbb{G}\left(h_{(X,\omega)}h_{(Y,\omega)} f^{q-3}\right)dd^{c}f\right\rangle_{(g,q,a)},
\end{split}
\end{align}

for any $X=J \grad_g(h_{(X,\omega)})$ and $Y=J \grad_g (h_{(Y,\omega)})$ in ${\rm Lie}(\T)$ with $h_{X,\omega}$, $h_{Y,\omega}$ are the normalized real potential of $-JX$, $-JY$ respectively.
\end{lem}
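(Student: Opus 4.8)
The plan is to differentiate the two defining integral expressions directly. Because $\mathcal{F}$ and $\mathcal{B}$ are invariants of the Kähler class (independent of the chosen representative), I would represent the deformed class $[\omega+t\alpha]$ by the harmonic representative $\omega_t:=\omega+t\alpha$ and carry out an honest $t$-derivative at $t=0$ with $s=u=0$. The assumption $(\omega,\alpha)=0$ (together with $\Delta(\Lambda_\omega\alpha)=\Lambda_\omega\Delta\alpha=0$, so that $\Lambda_\omega\alpha$ is constant) gives $\dot v_{\omega_t}=(\Lambda_\omega\alpha)\,v_\omega=0$, which already removes one family of terms. The first variations I need are then assembled from \Cref{DS} and \eqref{variations}: namely $\dot f=\lambda=-\mathbb{G}(\alpha,dd^{c}f)$ and $\partial_t{\rm S}_{(K,q,a)}(\omega_t)=B$, together with the variation of the normalized potential of a fixed $X\in{\rm Lie}(\T)$. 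For the latter, differentiating the defining relation $\imath_X\omega_t=-d\,h_{(X,\omega_t)}$ gives $d\dot h_{(X,\omega)}=-\imath_X\alpha$, and the same computation as for $f$ (via \cite[Lemma 5.2.4]{Gauduchon-book}) yields $\dot h_{(X,\omega)}=-\mathbb{G}(\alpha,dd^{c}h_{(X,\omega)})$ up to an additive constant fixed by the normalization.

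For $\mathcal{F}$, I would differentiate $\mathcal{F}(0,t,0)(X)=\int_M({\rm S}_{(K,q,a)}(\omega_t)-c)\,h_{(X,\omega_t)}f^{q-2}v_{\omega_t}$ term by term. The derivative of the topological constant $c$ drops out, since $h_{(X,\omega)}$ has zero mean with respect to $f^{q-2}v_\omega$; for the same reason the additive-constant ambiguity in $\dot h_{(X,\omega)}$ is annihilated when paired against the mean-zero factor $({\rm S}_{(K,q,a)}(\omega)-c)f^{q-2}$. The $\partial_t{\rm S}$ contribution is then exactly $\langle h_{(X,\omega)},B\rangle_{(g,q,a)}$, the first term of \eqref{d-F-t}. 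The $\dot h_{(X,\omega)}$ contribution, after using the self-adjointness of $\mathbb{G}$ with respect to $v_\omega$ and integrating by parts against the harmonic (hence closed and co-closed) form $\alpha$ to transfer $dd^{c}$ onto the Green potential, produces the third term of \eqref{d-F-t} involving $\mathbb{G}(f^{q-2}{\rm S})$. Finally the weight-derivative $\partial_t(f^{q-2}v_{\omega_t})=(q-2)\lambda f^{q-3}v_\omega$ supplies the remaining $\lambda$-term; collecting and re-expressing the mean-value corrections yields \eqref{d-F-t}.

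For $\mathcal{B}$, I would first note that for Killing fields $X,Y\in{\rm Lie}(\T)$ the normalized holomorphy potentials are purely imaginary, $F^X_\omega=\sqrt{-1}\,h_{(X,\omega)}$, so that $\mathcal{B}(0,t,0)(X,Y)=-\int_M h_{(X,\omega_t)}h_{(Y,\omega_t)}f^{q-2}v_{\omega_t}$ (consistent with the negative-definiteness of $\mathcal{B}$ on ${\rm Lie}(\T)$). Differentiating gives two potential-variation terms and one weight-derivative term. Feeding in $\dot h_{(X,\omega)}$, $\dot h_{(Y,\omega)}$ and applying self-adjointness of $\mathbb{G}$ and integration by parts against $\alpha$ to each produces the two symmetric terms of \eqref{d-B-t}; the weight term $(q-2)\int_M h_{(X,\omega)}h_{(Y,\omega)}\lambda f^{q-3}v_\omega$, rewritten through $\lambda=-\mathbb{G}(\alpha,dd^{c}f)$ and one more application of self-adjointness of $\mathbb{G}$, produces the third term.

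The routine differentiation is not the difficulty; the hard part will be the reorganization and bookkeeping in the final simplification. Specifically, I expect the delicate points to be: keeping track of the mean-value (constant) corrections so that the Green operator lands on exactly $f^{q-2}{\rm S}$ (respectively $h_{(X,\omega)}h_{(Y,\omega)}f^{q-3}$) and not on a shifted function; getting every sign correct, in particular the $\sqrt{-1}$-sign that flips $\mathcal{B}$; and applying the self-adjointness of $\mathbb{G}$ with respect to the unweighted volume $v_\omega$ (not the weighted measure $f^{q-2}v_\omega$) in tandem with the integration-by-parts identities for the harmonic form $\alpha$, which are precisely what convert the raw derivatives into the stated closed forms.
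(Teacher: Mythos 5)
Your proposal is correct and matches the paper's own proof essentially step for step: the paper likewise differentiates $\mathcal{F}$ and $\mathcal{B}$ along the harmonic path $\omega+t\alpha$, uses $\dot f=\lambda$, $\partial_t{\rm S}=B$ and $\dot h_{(X,\omega_t)}=-\mathbb{G}\left(\delta(\alpha(X,\cdot))\right)$, writes $\mathcal{B}(0,t,0)(X,Y)=-\int_M h_{(X,\omega_t)}h_{(Y,\omega_t)}f^{q-2}v_{\omega_t}$ on ${\rm Lie}(\T)$, and converts the Green-operator terms into the stated form by self-adjointness of $\mathbb{G}$ and integration by parts against the closed form $\alpha$, exactly as you describe. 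The only cosmetic difference is that you carry the constant $c_{(\Omega,K,q,a)}$ and the normalization ambiguities of the potentials explicitly, whereas the paper suppresses them from the outset using the zero-mean normalization.
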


\begin{proof}
For the derivative of $\mathcal{F}(s,t,u)$ we have, 
\begin{equation*}
\mathcal{F}(s,t,u)(X)=\int_M{\rm S}(s,t,u)h_{(X,\omega+t\alpha)}f^{q-2}_{(s,t,u)}v_{\omega+t\alpha},
\end{equation*}
then,
\begin{align*}
\left.\frac{\partial}{\partial t}\right|_{0}\mathcal{F}(s,t,u)(X)=&\int_M B h _{(X,\omega)}f^{q-2}v_\omega-(q-2)\int_M{\rm S}\lambda f^{q-3}h_{(X,\omega)}v_\omega\\&-\int_M{\rm S}f^{q-2}\mathbb{G}\left(\delta(\alpha(X,\cdot))\right)v_\omega.
\end{align*}
On the other hand,
\begin{align*}
\int_M{\rm S}f^{q-2}\mathbb{G}\left(\delta(\alpha(X,.))\right)v_\omega=&\int_M\left(\alpha(X,.),d\mathbb{G}\left(f^{q-2}{\rm S}\right)\right) v_\omega\\
=&\int_M\left(\alpha,X^{\flat}\wedge d\mathbb{G}\left(f^{q-2}{\rm S}\right)\right) v_\omega\\
=&\int_M\left(\alpha,d^{c}h\wedge d\mathbb{G}\left(f^{q-2}{\rm S}\right)\right) v_\omega\\
=&\int_M h.\left(\alpha,dd^{c}\mathbb{G}\left(f^{q-2}{\rm S}\right)\right) v_\omega\\
=&\left\langle h_{(X,\omega)},f^{2-q}\left(\alpha,dd^{c}\mathbb{G}\left(f^{q-2}{\rm S}\right)\right)\right\rangle_{(g,q,a)}.
\end{align*}
which gives the expression \eqref{d-F-t} for the $t$-derivative of $\mathcal{F}(s,t,u)$.\\

\smallskip

Now we calculate the $t$-derivative of $\mathcal{B}(s,t,u)$. We have
\begin{equation*}
\mathcal{B}(s,t,u)(X,Y)=-\int_M h_{(X,\omega+t\alpha)}h_{(Y,\omega+t\alpha)}f^{q-2}_{(s,t,u)}v_{\omega+t\alpha}.
\end{equation*}
Then
\begin{align*}
\left.\frac{\partial}{\partial t}\right|_{0}\mathcal{B}(s,t,u)(X,Y)=&\int_M \mathbb{G}\left(\delta(\alpha(X,\cdot))\right)h_{Y}f^{q-2}v_\omega\\&+\int_M \mathbb{G}\left(\delta(\alpha(Y,\cdot))\right)h_{X}f^{q-2}v_\omega\\&+(q-2)\int_M \mathbb{G}\left(\delta(\alpha(K,\cdot))\right)h_{X}h_Y f^{q-3}v_\omega.
\end{align*}
On the other hand,
\begin{eqnarray*}
\int_M \mathbb{G}\left(\delta(\alpha(X,.))\right)h_{Y}f^{q-2}v_\omega&=&\int_M \left(\alpha(X,\cdot),d\mathbb{G}\left(h_{Y}f^{q-2}\right)\right)v_\omega\\
&=&\int_M \left(\alpha,d^{c}h_X\wedge d\mathbb{G}\left(h_{Y}f^{q-2}\right)\right)v_\omega\\
&=&\left\langle\alpha,f^{2-q}h_Xdd^{c}\mathbb{G}\left(h_{Y}f^{q-2}\right)\right\rangle_{(g,q,a)},
\end{eqnarray*}

\begin{eqnarray*}
\int_M \mathbb{G}\left(\delta(\alpha(Y,.))\right)h_{X}f^{q-2}v_\omega&=&\int_M \left(\alpha(Y,\cdot),d\mathbb{G}\left(h_{X}f^{q-2}\right)\right)v_\omega\\
&=&\int_M \left(\alpha,d^{c}h_Y\wedge d\mathbb{G}\left(h_{X}f^{q-2}\right)\right)v_\omega\\
&=&\int_M \left(\alpha,\mathbb{G}\left(h_{X}f^{q-2}\right)dd^{c}h_Y\right)v_\omega\\
&=&\left\langle\alpha,f^{2-q}\mathbb{G}\left(h_{X}f^{q-2}\right)dd^{c}h_Y\right\rangle_{(g,q,a)},
\end{eqnarray*}
and, 
\begin{equation*}
\int_M \mathbb{G}\left(\delta(\alpha(K,.))\right)h_{X}h_Y f^{q-3}v_\omega=\left\langle\alpha,f^{2-q}\mathbb{G}\left(h_{X}h_Y f^{q-3}\right)dd^{c}f\right\rangle_{(g,q,a)}.
\end{equation*}
Which proves \eqref{d-B-t}. 

\end{proof}

In the following lemma we give the $s$ and $u$-derivatives of $\mathcal{F}(s,t,u)$ and $\mathcal{B}(s,t,u)$ in $(s,t,u)=(0,0,0)$. We omit the proof since it follows from straightforward calculations.

\begin{lem}\label{d-F-B-K-a}\leavevmode
\begin{enumerate}
\item The $s$-derivative of $\mathcal{F}(s,t,u)$ is given by
\begin{equation}\label{d-F-s}
\left.\frac{\partial}{\partial s}\right|_{0}\mathcal{F}(s,t,u)(X)=\left\langle q f^{-1}{\rm S}_{q-1},h_{(X,\omega)}\right\rangle_{(g,q,a)}.
\end{equation}
where ${\rm S}_{q-1}:={\rm S}_{(K,q-1,a)}(\omega)$.

\item The $u$-derivative of $\mathcal{F}(s,t,u)$

\begin{equation}\label{d-F-u}
\left.\frac{\partial}{\partial u}\right|_{0}\mathcal{F}(s,t,u)(X)=\left\langle C+(q-2)f^{-1}f_{(H,\omega)}{\rm S},h_{(X,\omega)}\right\rangle_{(g,q,a)}.
\end{equation}

\item The $s$-derivative of $\mathcal{B}(s,t,u)$ is given by
\begin{equation}\label{d-B-s}
\left.\frac{\partial}{\partial s}\right|_{0}\mathcal{B}(s,t,u)=(q-2)\mathcal{B}_{(\Omega,K,q-1,a)}.
\end{equation}

\item The $u$-derivative of $\mathcal{B}(s,t,u)$ is given by
\begin{equation}\label{d-B-u}
\left.\frac{\partial}{\partial u}\right|_{0}\mathcal{B}(s,t,u)(X,Y)=(q-2)\int_M h_{(X,\omega)}h_{(Y,\omega)}f_{(H,\omega)}f^{q-3}v_\omega
\end{equation}
for any $X=J \grad_g(h_{(X,\omega)})$ and $Y=J \grad_g (h_{(Y,\omega)})$ in ${\rm Lie}(\T)$.
\end{enumerate}
\end{lem}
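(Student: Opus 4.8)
The plan is to differentiate the integral representations of $\mathcal{F}$ and $\mathcal{B}$ directly, in the spirit of the proof of \Cref{d-F-B-t}; the simplification for the $s$- and $u$-derivatives is that the K\"ahler form $\omega$ (and hence $v_\omega$) is held fixed, so that only the normalization constant $a+s$ and the vector field $K+uH$ move. Since $h_{(X,\omega)}$ has zero mean with respect to $f^{q-2}v_\omega$, the topological constant $c_{(\Omega,K,q,a)}$ pairs to zero and one may write, at $t=0$ and $\phi=0$,
\begin{equation*}
\mathcal{F}(s,0,u)(X)=\int_M {\rm S}(s,0,u,0)\,h_{(X,\omega)}\,f_{(s,0,u,0)}^{\,q-2}\,v_\omega,\qquad \mathcal{B}(s,0,u)(X,Y)=-\int_M h_{(X,\omega)}\,h_{(Y,\omega)}\,f_{(s,0,u,0)}^{\,q-2}\,v_\omega .
\end{equation*}
Differentiating under the integral sign then reduces everything to the $s$- and $u$-derivatives of the three building blocks $f$, ${\rm S}$, and the weight $f^{q-2}$.

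For the building blocks: since changing $a$ to $a+s$ leaves $K$ and $\omega$ fixed, it only shifts the Killing potential by an additive constant, so $\left.\partial_s\right|_0 f$ is a constant, while $\imath_{K+uH}\omega=-df_{(s,0,u,0)}$ gives $\left.\partial_u\right|_0 f=f_{(H,\omega)}$, the Killing potential of $H$; consequently the derivatives of the weight are $(q-2)f^{q-3}\partial_\bullet f$. For the scalar curvature I would simply quote \Cref{DS}, namely $\left.\partial_s\right|_0{\rm S}=A$ and $\left.\partial_u\right|_0{\rm S}=C$. Finally, the potentials $h_{(X,\omega)}$ and $h_{(Y,\omega)}$ change only through their normalizing constants, hence $\partial_\bullet h_{(X,\omega)}$ is a constant; paired against $f^{q-2}v_\omega$ (and for $\mathcal{B}$ against $h_{(Y,\omega)}f^{q-2}v_\omega$) such a constant integrates to zero by the very normalization of the potentials, so these variations do not contribute.

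Assembling the $\mathcal{F}$-derivatives, the surviving terms are the scalar-curvature term $\langle \partial_\bullet{\rm S},h_{(X,\omega)}\rangle_{(g,q,a)}$ and the weight term $(q-2)\langle f^{-1}\,{\rm S}\,\partial_\bullet f,\,h_{(X,\omega)}\rangle_{(g,q,a)}$. For the $u$-derivative this is already the asserted combination $C+(q-2)f^{-1}f_{(H,\omega)}{\rm S}$ of \eqref{d-F-u}. For the $s$-derivative one has $\partial_s f\equiv\mathrm{const}$, and the crux is the pointwise algebraic identity
\begin{equation*}
A+(q-2)f^{-1}{\rm S}_{(K,q,a)}(\omega)=q\,f^{-1}{\rm S}_{(K,q-1,a)}(\omega),
\end{equation*}
which I would verify by expanding both sides using ${\rm S}_{(K,q,a)}=f^2{\rm Scal}+2qf\Delta f-q(q-1)|K|^2$ and $A=2f{\rm Scal}+2q\Delta f$; this collapses the two terms into $\langle q f^{-1}{\rm S}_{q-1},h_{(X,\omega)}\rangle_{(g,q,a)}$, which is \eqref{d-F-s}. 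For $\mathcal{B}$ only the weight varies, producing $(q-2)\partial_\bullet f\cdot f^{q-3}$ inside the integral: for $s$ the drop $f^{q-2}\mapsto f^{q-3}$ is precisely the passage from the $q$- to the $(q-1)$-weight, giving $(q-2)\mathcal{B}_{(\Omega,K,q-1,a)}$ as in \eqref{d-B-s}, while for $u$ it inserts the extra factor $f_{(H,\omega)}$, giving \eqref{d-B-u}. The main point requiring care is the bookkeeping of the normalizing constants of the potentials (and of $c_{(\Omega,K,q,a)}$), which must be shown to drop via the zero-mean conditions; once this is settled, the $s$-derivative of $\mathcal{F}$ hinges entirely on the displayed ${\rm S}_{q-1}$ identity.
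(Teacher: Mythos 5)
Your overall strategy is the natural one (the paper itself omits the proof as a ``straightforward calculation''): hold $\omega$ and $v_\omega$ fixed, differentiate $f$, ${\rm S}$ and the weight $f^{q-2}$ under the integral, with $\partial_s f=\mathrm{const}$, $\partial_u f=f_{(H,\omega)}$, and $\partial_s{\rm S}=A$, $\partial_u{\rm S}=C$ quoted from \Cref{DS}. The pointwise identity you isolate as the crux of (i) is indeed correct: both $A+(q-2)f^{-1}{\rm S}_{(K,q,a)}(\omega)$ and $qf^{-1}{\rm S}_{(K,q-1,a)}(\omega)$ expand to $qf\,{\rm Scal}(\omega)+2q(q-1)\Delta f-q(q-1)(q-2)f^{-1}|K|^{2}_{g}$ (note that this, like the formula for $A$ itself, implicitly uses $\partial_s f=1$, i.e.\ unit volume).

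The genuine gap is exactly the point you defer -- the bookkeeping of the normalizing constants -- and your proposed disposal of it is inconsistent for $\mathcal{F}$. You cannot simultaneously drop $c_{(\Omega,K,q,a)}$ from the integrand and discard the variation of the potentials' normalizing constants: dropping $c$ at parameter value $(s,u)$ presupposes that the potential is renormalized against the moving measure $f^{q-2}_{(s,0,u,0)}v_\omega$, and then its derivative is a constant $\partial_\bullet c'$ which enters paired against ${\rm S}\,f^{q-2}v_\omega$, contributing $\partial_\bullet c'\cdot c_{(\Omega,K,q,a)}\int_M f^{q-2}v_\omega\neq 0$; your zero-mean argument kills constants paired against $h_{(X,\omega)}f^{q-2}v_\omega$, not against ${\rm S}\,f^{q-2}v_\omega$. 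Equivalently, if you keep the $-c_{(\Omega,K,q,a)}$ subtraction (which makes the normalization of $h_{(X,\omega)}$ irrelevant, so that there genuinely is no $\partial h$ term), then differentiating the weight produces the cross term $-(q-2)\,c_{(\Omega,K,q,a)}\int_M h_{(X,\omega)}f^{q-3}\,\partial_\bullet f\,v_\omega$. Either way, the careful computation yields \eqref{d-F-s} and \eqref{d-F-u} with ${\rm S}$ replaced by ${\rm S}-c_{(\Omega,K,q,a)}$ in the weight-variation term, i.e.\ with the extra term $-(q-2)\,c_{(\Omega,K,q,a)}\langle f^{-1}\partial_\bullet f,\,h_{(X,\omega)}\rangle_{(g,q,a)}$, which does not vanish in general; so as written your argument does not establish the displayed formulas, and closing the gap requires either tracking these $c$-terms or exhibiting a convention under which they disappear. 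The same issue touches (iii), since $(q-2)\mathcal{B}_{(\Omega,K,q-1,a)}$ is defined with potentials normalized against $f^{q-3}v_\omega$ while your differentiation produces $f^{q-2}v_\omega$-normalized ones (the two differ by a rank-one correction); and in (iv), with the convention $\mathcal{B}(X,Y)=-\int_M h_{(X,\omega)}h_{(Y,\omega)}f^{q-2}v_\omega$ used in the proof of \Cref{d-F-B-t} and $\partial_u f=+f_{(H,\omega)}$, the straightforward derivative is $-(q-2)\int_M h_{(X,\omega)}h_{(Y,\omega)}f_{(H,\omega)}f^{q-3}v_\omega$, the negative of \eqref{d-B-u}, so the sign must be reconciled rather than asserted.
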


\begin{lem}
Let $\omega$ be a $(K,q,a)$-extremal metric, we have
\begin{enumerate}\label{d-Z-s-t-u}
\item The $t$-derivative of $Z(s,t,u)$ is given by  
\begin{equation}\label{d-Z-t}
\left.\frac{\partial}{\partial t}\right|_{0}Z(s,t,u)=J \grad_{g}\left(\Pi^{\mathbb{T}}_{g} \left[B+\mathbb{G}\left(\alpha,dd^{c}{\rm S}\right]\right)\right).
\end{equation}
\item The $s$-derivative of $Z(s,t,u)$ is given by
\begin{equation}\label{d-Z-s}
\left.\frac{\partial}{\partial s}\right|_{0}Z(s,t,u)=J\grad_g\left(\Pi_{g}^{\mathbb{T}}\left[f^{-1}\left(q{\rm S}_{q-1}+{\rm S}\right)\right]\right).
\end{equation}

\item 
\begin{equation}\label{d-Z-K}
\left.\frac{\partial}{\partial u}\right|_{0}Z(s,t,u)=J\grad_g\left(\Pi_{g}^{\mathbb{T}}\left[C+2(q-2)f^{-1}f_{(H,\omega)}{\rm S}\right]\right).
\end{equation}
\end{enumerate}
\end{lem}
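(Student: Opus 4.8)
The plan is to pin down the three derivatives by two elementary reductions, writing $\partial$ for any of $\partial_s,\partial_t,\partial_u$ evaluated at the origin. First, since each $Z(s,t,u)$ lies in the \emph{fixed} finite-dimensional space ${\rm Lie}(\T)$ and the bilinear form $\mathcal{B}(0,0,0)$ is negative definite there, the vector $\partial Z\in{\rm Lie}(\T)$ is completely determined by the numbers $\mathcal{B}(0,0,0)(X,\partial Z)$, $X\in{\rm Lie}(\T)$. Second, recalling that $\mathcal{B}(X,Y)=-\int_M h_{(X,\omega)}h_{(Y,\omega)}f^{q-2}v_\omega$ on ${\rm Lie}(\T)$, for \emph{any} $\T$-invariant function $\Psi$ the vector field $W:=J\grad_g\left(\Pi^{\T}_g[\Psi]\right)$ satisfies
\begin{equation*}
\mathcal{B}(0,0,0)(X,W)=-\langle h_{(X,\omega)},\Psi\rangle_{(g,q,a)},\qquad X\in{\rm Lie}(\T),
\end{equation*}
because $h_{(X,\omega)}$ is itself a Killing potential and $\Pi^{\T}_g$ is $\langle\cdot,\cdot\rangle_{(g,q,a)}$-self-adjoint, so the projection may be dropped against $h_{(X,\omega)}$. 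Hence each of \eqref{d-Z-t}, \eqref{d-Z-s}, \eqref{d-Z-K} is equivalent to the scalar identity $\mathcal{B}(0,0,0)(X,\partial Z)=-\langle h_{(X,\omega)},\Psi\rangle_{(g,q,a)}$, with $\Psi$ the bracketed function in the corresponding formula.

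To evaluate the left-hand side I would differentiate the defining relation \eqref{F=B}, $\mathcal{F}(s,t,u)(X)=-\mathcal{B}(s,t,u)(X,Z(s,t,u))$, obtaining
\begin{equation*}
\mathcal{B}(0,0,0)(X,\partial Z)=-\partial\mathcal{F}(X)-\partial\mathcal{B}(X,Z(0,0,0)).
\end{equation*}
Here the extremality of $\omega$ enters decisively: ${\rm S}$ is then a Killing potential, so $\Pi^{\T}_g({\rm S})={\rm S}$, $Z(0,0,0)=\Xi$, and its normalized potential is $h_{(\Xi,\omega)}={\rm S}-c_{(\Omega,K,q,a)}$. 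The derivatives $\partial\mathcal{F}$ and $\partial\mathcal{B}$ are supplied by \Cref{d-F-B-t} and \Cref{d-F-B-K-a}, and the entries $A,B,C$ of ${\rm D}_0{\rm S}$ by \Cref{DS}. In the two algebraic directions $s$ and $u$, where $\omega$ (hence $g$) is frozen, the key simplification is the pointwise identity
\begin{equation*}
q\,{\rm S}_{q-1}-(q-2)\,{\rm S}=f\,A
\end{equation*}
together with its $u$-analogue relating $C$ and $f_{(H,\omega)}$. Substituting $h_{(\Xi,\omega)}={\rm S}-c_{(\Omega,K,q,a)}$ into $\partial_s\mathcal{B}=(q-2)\mathcal{B}_{(\Omega,K,q-1,a)}$ and its $u$-counterpart and reconverting the resulting $f^{q-3}v_\omega$-weighted pairings into $\langle\cdot,\cdot\rangle_{(g,q,a)}$ should then yield $\Psi=f^{-1}(q\,{\rm S}_{q-1}+{\rm S})$ and $\Psi=C+2(q-2)f^{-1}f_{(H,\omega)}{\rm S}$, the extra $f^{-1}{\rm S}$- and $f^{-1}f_{(H,\omega)}{\rm S}$-terms being exactly the contribution of the variation of the density $f^{q-2}v_\omega$, controlled by $\partial_s f$ and by $\partial_u f|_0=f_{(H,\omega)}$.

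The main obstacle is the $t$-direction, where the K\"ahler class moves and so $g$, $\grad_g$, the density, and even the space $P^{\T}_g$ of Killing potentials all vary along $\omega+t\alpha$. Conceptually, the term $\mathbb{G}(\alpha,dd^{c}{\rm S})$ is the infinitesimal change of the Killing potential of the fixed-type field $Z(t)$ under the change of symplectic form $\omega\mapsto\omega+t\alpha$ — the exact analogue of $\partial_t f|_0=-\mathbb{G}(\alpha,dd^{c}f)=\lambda$ from \Cref{DS}, now with the potential ${\rm S}$ of $\Xi=Z(0,0,0)$ in place of $f$. Making this rigorous means taking the $\alpha$-dependent Green-operator expressions produced by \eqref{d-F-t} and \eqref{d-B-t}, inserting $h_{(\Xi,\omega)}={\rm S}-c_{(\Omega,K,q,a)}$, and integrating by parts — using that $\alpha$ is harmonic and $\T$-invariant and that ${\rm S}$ is a Killing potential — until the right-hand side collapses to $-\langle h_{(X,\omega)},B+\mathbb{G}(\alpha,dd^{c}{\rm S})\rangle_{(g,q,a)}$. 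I expect the genuinely delicate point throughout to be the bookkeeping of the additive normalization constants of the potentials, which differ between the measures $f^{q-2}v_\omega$ and $f^{q-3}v_\omega$; tracking their cancellation, rather than any single computation, is what makes this technical rather than routine.
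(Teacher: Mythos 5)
Your proposal is correct and takes essentially the same approach as the paper's proof: the paper likewise observes that the derivative of $Z(s,t,u)$ at the origin equals $J\grad_g(P_g)$ for some $P_g\in P^{\mathbb{T}}_g(M,\R)$, differentiates the relation $\mathcal{B}(s,t,u)(Z(s,t,u),X)=-\mathcal{F}(s,t,u)(X)$ (i.e.\ \eqref{F=B}) at the origin, pairs against $h_{(X,\omega)}$ via the negative-definite form $\mathcal{B}(0)$, and identifies $P_g$ from \Cref{DS}, \Cref{d-F-B-t} and \Cref{d-F-B-K-a} together with extremality ($Z(0)=\Xi$, $h_{(\Xi,\omega)}={\rm S}-c_{(\Omega,K,q,a)}$, $\Pi^{\mathbb{T}}_g{\rm S}={\rm S}$). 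Your explicit ``second reduction'' and the pointwise identity $q{\rm S}_{q-1}-(q-2){\rm S}=fA$ merely spell out what the paper leaves implicit in its final substitution step.
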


\begin{proof}\leavevmode
\begin{enumerate}
\item We have $\left.\frac{\partial}{\partial t}\right|_{0}Z(s,t,u)=J \grad_g(P_{g})$ for some function $P_{g}\in P^{\mathbb{T}}_g(M,\R)$, since $Z(s,t,u)\in {\rm Lie}(\T)$ for all $(s,t,u)$. By \eqref{F=B}, for all $X\in{\rm Lie}(\T)$ we have,
\begin{equation}\label{ss}
\mathcal{B}(s,t,u)(Z(s,t,u),X)=-\mathcal{F}(s,t,u)(X)
\end{equation}
then
\begin{eqnarray*}
\mathcal{B}(0)\left(\left.\frac{\partial}{\partial t}\right|_{0}Z(s,t,u),X\right)&=&-\langle P_{g},h_{(X,\omega)}\rangle_{(g,q,a)}\\
&=&-\left.\frac{\partial}{\partial t}\right|_{0}\mathcal{F}(s,t,u)(X)-\left(\left.\frac{\partial}{\partial t}\right|_{0}\mathcal{B}(s,t,u)\right)\left(Z(0),X\right).
\end{eqnarray*}
Using \eqref{d-F-t}, \eqref{d-B-t} and the fact that $\omega$ is $(K,q,a)-$extremal we get, 
\begin{equation*}
P_{g}=\Pi^{\T}_g \left(B+\mathbb{G}\left(\alpha,dd^{c}{\rm S}\right)\right).
\end{equation*} 
\item We have $\left.\frac{\partial}{\partial s}\right|_{0}Z(s,t,u)=J \grad_g(Q_{g})$ for some function $Q_{g}\in P^{\mathbb{T}}_g(M,\R)$, since $Z(s,t,u)\in {\rm Lie}(\T)$ for all $(s,t,u)$. Taking the derivative of \eqref{ss} with respect to $s$ we get,
\begin{align*}
-\mathcal{B}(0)\left(\left.\frac{\partial}{\partial s}\right|_{0}Z(s,t,u),X\right)=&\langle Q_{g},h_{(X,\omega)}\rangle_{(g,q,a)}\\=&\left.\frac{\partial}{\partial s}\right|_{0}\mathcal{F}(s,t,u)(X)+\left(\left.\frac{\partial}{\partial s}\right|_{0}\mathcal{B}(s,t,u)\right)\left(Z(0),X\right)\\
=&\left\langle f^{-1}\left(q{\rm S}_{(q-1}+{\rm S}\right),h_{(X,\omega)}\right\rangle_{(g,q,a)}
\end{align*}

where we used the fact that $\omega$ is $(K,q,a)$-extremal and \eqref{d-F-s}, \eqref{d-B-s}. Thus
\begin{equation*}
Q_{g}=f^{-1}\left(q{\rm S}_{(K,q-1,a)}(\omega)+{\rm S}_{(K,q,a)}(\omega)\right)
\end{equation*}
which proves the result. 
\item This is done similarly to \eqref{d-Z-t} and \eqref{d-Z-s} by using \eqref{ss}, \eqref{d-F-u} and \eqref{d-B-u}.
\end{enumerate}
\end{proof}

We denote by $\Pi^{\T}_{(s,t,u,\phi)}$ the orthogonal projection on $\mathcal{P}_{g_{t,\phi}}^{\T}$ with respect to the inner product $\langle\cdot,\cdot\rangle_{g_{t,\phi},q,a+s}$.
 
\begin{lem}\label{ext-pot}
For a $(K,q,a)$-extremal metric $\omega$ we have, 

\begin{equation}\label{ext-pot-t}
\left.\frac{\partial}{\partial t}\right|_{0}\Pi_{(s,t,u,\phi)}^{\mathbb{T}}{\rm S}(s,t,u,\phi)=\Pi_{g}^{\mathbb{T}} B+(\Pi_{g}^{\mathbb{T}}-\Id)\left(\mathbb{G}(\alpha,dd^{c}{\rm S}\right).
\end{equation}
  
\begin{equation}\label{ext-pot-s}
\left.\frac{\partial}{\partial s}\right|_{0}\Pi_{(s,t,u,\phi)}^{\mathbb{T}}{\rm S}(s,t,u,\phi)=\Pi_{g}^{\mathbb{T}}\left[f^{-1}\left(q{\rm S}_{q-1}+{\rm S}\right)\right].
\end{equation}

\begin{equation}\label{ext-pot-u}
\left.\frac{\partial}{\partial u}\right|_{0}\Pi_{(s,t,u,\phi)}^{\mathbb{T}}{\rm S}(s,t,u,\phi)=\Pi_{g}^{\mathbb{T}}\left[C+2(q-2)f^{-1}f_{(H,\omega)}{\rm S}\right].
\end{equation}
\end{lem}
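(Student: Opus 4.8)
The starting point is that, by the definition of the extremal vector field in \Cref{s5}, the function $P(s,t,u,\phi):=\Pi^{\T}_{(s,t,u,\phi)}{\rm S}(s,t,u,\phi)$ is precisely the Killing potential lying in $\mathcal{P}^{\T}_{g_{t,\phi}}$ of $Z(s,t,u)=J\grad_{g_{t,\phi}}P$; equivalently, $P$ is characterized by the two conditions $dP=-\iota_{Z(s,t,u)}\omega_{t,\phi}$, which fixes its $\grad$-part, and the scalar normalization $\langle {\rm S}(s,t,u,\phi)-P,1\rangle_{(g_{t,\phi},q,a+s)}=0$ coming from the orthogonal projection, which fixes its additive constant. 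The plan is to differentiate both conditions at $(s,t,u,\phi)=0$, feeding in the derivatives of $Z$ supplied by \Cref{d-Z-s-t-u}, and using crucially that for a $(K,q,a)$-extremal metric ${\rm S}={\rm S}_{(K,q,a)}(\omega)$ lies in $\mathcal{P}^{\T}_g$, so that $P_0={\rm S}$ and $Z_0=\Xi=J\grad_g{\rm S}$.

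First I would handle the $s$- and $u$-derivatives, which are simpler because $\omega_{t,\phi}$, the metric $g$ and the space $\mathcal{P}^{\T}_g$ are all independent of $s$ and $u$. Differentiating $dP=-\iota_Z\omega$ then gives $d\dot P=-\iota_{\dot Z}\omega$, so $\dot P$ agrees, up to an additive constant, with the Killing potential of $\dot Z$ produced by \eqref{d-Z-s} and \eqref{d-Z-K}; since $\dot P\in\mathcal{P}^{\T}_g$ as well, the two differ by a genuine constant. To pin that constant down I would differentiate the normalization $\langle {\rm S}-P,1\rangle_{(g,q,a+s)}=0$: because ${\rm S}-P$ vanishes at the base point, the term involving the $s$-variation of the weight $f^{q-2}$ drops out, and one recovers the weighted mean of $\dot P$ directly. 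This yields \eqref{ext-pot-s} and \eqref{ext-pot-u}.

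The $t$-derivative is the delicate case and is where I expect the main work, since now the Kähler form moves by $\dot\omega_{t,\phi}=\alpha$ and both the metric and the space $\mathcal{P}^{\T}_{g_{t,\phi}}$ tilt. Differentiating $dP=-\iota_Z\omega_{t,\phi}$ now produces, besides $-\iota_{\dot Z}\omega$, the extra term $-\iota_{Z_0}\alpha$. As $\alpha$ is closed and $\T$-invariant and $Z_0\in{\rm Lie}(\T)$, the form $\iota_{Z_0}\alpha$ is exact, and its primitive is exactly the variation of a Killing potential under a change of Kähler form: applying the Green-operator identity that gives $\lambda=-\mathbb{G}(\alpha,dd^{c}f)$ in \Cref{DS}, but now to $Z_0=\Xi$ whose potential is ${\rm S}$, one gets $\iota_{Z_0}\alpha=d\,\mathbb{G}(\alpha,dd^{c}{\rm S})$. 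Combining this with the contribution $\Pi^{\T}_g\bigl(B+\mathbb{G}(\alpha,dd^{c}{\rm S})\bigr)$ coming from the potential of $\dot Z$ in \eqref{d-Z-t} gives $\Pi^{\T}_g B+\Pi^{\T}_g\mathbb{G}(\alpha,dd^{c}{\rm S})-\mathbb{G}(\alpha,dd^{c}{\rm S})=\Pi^{\T}_g B+(\Pi^{\T}_g-\Id)\mathbb{G}(\alpha,dd^{c}{\rm S})$, which is \eqref{ext-pot-t}. The crux is the bookkeeping that leaves the $\dot Z$-contribution projected while the Kähler-form variation enters un-projected: the operator $\Pi^{\T}_g-\Id$ is precisely the infinitesimal signature of the motion of $\mathcal{P}^{\T}_{g_{t,\phi}}$, and one must also check, as in the $s$- and $u$-cases, that the leftover additive constant is annihilated by the normalization of the projection.
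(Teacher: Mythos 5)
Your proposal is correct and takes essentially the same route as the paper: you differentiate the defining identity of the projection (you phrase it as $dP=-\iota_{Z(s,t,u)}\omega_{t,\phi}$, the paper as $Z(s,t,u)=J\grad_{g_{t,\phi}}\Pi^{\mathbb{T}}_{(s,t,u,\phi)}{\rm S}(s,t,u,\phi)$, which is the same relation), feed in \eqref{d-Z-t}, \eqref{d-Z-s}, \eqref{d-Z-K}, identify the K\"ahler-form/metric variation term with $d\,\mathbb{G}(\alpha,dd^{c}{\rm S})$ via the Green-operator formula (the paper writes it as $\left(\alpha(Z(0),\cdot)\right)^{\sharp}=\grad_g\mathbb{G}(\alpha,dd^{c}{\rm S})$), and fix the additive constant by differentiating the normalization $\langle\Pi^{\mathbb{T}}{\rm S},1\rangle=\langle{\rm S},1\rangle$ using that ${\rm S}\in P^{\mathbb{T}}_g(M,\R)$ at the extremal base point. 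The only differences are cosmetic (symplectic contraction versus gradient bookkeeping, and treating $s,u$ before $t$).
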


\begin{proof}
We have $Z(s,t,u)=J\grad_{g_{t,\phi}}(\Pi^{\T}_{(s,t,u,\phi)}{\rm S}(s,t,u,\phi))$ then 
\begin{equation*}
J \grad_g\left(\left.\frac{\partial}{\partial t}\right|_{0}(\Pi_{(s,t,u,\phi)}^{\T}{\rm S}(s,t,u,\phi)\right)=\left.\frac{\partial}{\partial t}\right|_{0}Z(s,t,u)-J\left
(\left.\frac{\partial}{\partial t}\right|_{0}\grad_{g_{t,\phi}}\right)(\Pi_{g}^{\T}{\rm S}).
\end{equation*}
On the other hand we have,
\begin{equation*}
\left
(\left.\frac{\partial}{\partial t}\right|_{0}\grad_{g_{t,\phi}}\right)(\Pi_{g}^{\T}{\rm S})=\left(\alpha(Z(0),.)\right)^{\sharp}=\grad_g\left(\mathbb{G}\left(\alpha,dd^{c}{\rm S}\right)\right).
\end{equation*}
By \eqref{d-Z-t} it follows that
\begin{equation*}
\left.\frac{\partial}{\partial t}\right|_{0}\Pi_{(s,t,u,\phi)}^{\T}{\rm S}(s,t,u,\phi)=\Pi^{\T}_g B+(\Pi^{\T}_g-I)\left(\mathbb{G}(\alpha,dd^{c}{\rm S})\right)+c.
\end{equation*}
By differentiating in $t=0$ the equality,
\begin{equation*}
\int_M(\Pi^{\T}_{g_t}{\rm S}(\omega_t))f^{q-2}_{(K,\omega_t,a)}=\int_M{\rm S}(\omega_t)f^{q-2}_{(K,\omega_t,a)}
\end{equation*}
we get $c=0$, which proofs \eqref{ext-pot-t}. Similarly we can show \eqref{ext-pot-s} and \eqref{ext-pot-u}.
\end{proof}

Following LeBrun-Simanca's arguments \cite{LS} we give a proof of \Cref{thm:LeBrun-Simanca}.

\begin{proof}
Let $L^{2}_{k}(M,\R)^{\mathbb{T},\perp}$ be the orthogonal complement of $P^{\mathbb{T}}_g(M,\R)$ with respect to $\langle\cdot,\cdot\rangle_{(g,q,a)}$ in $L^{2}_{k}(M,\R)^{\mathbb{T}}$. For $t\in(-\epsilon,\epsilon)$ and $\phi\in U$ where $U$ is a small neighborhood of the origin in $L^{2}_{k+4}(M,\R)^{\mathbb{T}}$. As in \cite{LS} by taking a smaller open set $U$ and smaller $\epsilon$ we may assume that,
\begin{equation*}
\ker\left(\Id-\Pi^{\mathbb{T}}_{g}\right)\circ\left(\Id-\Pi^{\mathbb{T}}_{(s,t,u,\phi)}\right)=\ker\left(\Id-\Pi^{\mathbb{T}}_{(s,t,u,\phi)}\right).
\end{equation*}
Now we consider the LeBrun-Simanca map
\begin{equation*}
\Psi:(-\epsilon,\epsilon)^{3}\times U\rightarrow(-\epsilon,\epsilon)^3\times L^{2}_{k}(M,\R)^{\mathbb{T},\perp} 
\end{equation*}
defined by
\begin{equation*}
\Psi(s,t,u,\phi):=\left(s,t,\left(\Id-\Pi^{\mathbb{T}}_{g}\right)\circ\left(\Id-\Pi^{\mathbb{T}}_{(s,t,u,\phi)}\right){\rm S}(s,t,u,\phi)\right).
\end{equation*}
Note that $\Psi(0)=0$ and if $\Psi(s,t,u,\phi)=(s,t,u,0)$ then $\omega_{t,\phi}$ is $(K+uH,q,a+s)$-extremal.\\

\smallskip 

The map $\Psi$ is $C^{1}$ and its Fr{\'e}chet derivative at the origin is given by:
\begin{eqnarray*}
{\rm D}_{0}\Psi&=&\begin{pmatrix} 
1&0&0&0 \\
0&1&0&0 \\
0&0&1&0  \\
0&0&0&\Id-\Pi^{\mathbb{T}}_{g}
\end{pmatrix}\begin{pmatrix}
1&0&0&0 \\
0&1&0&0 \\
0&0&1&0  \\
A&B+\mathbb{G}(\alpha,dd^{c}{\rm S})& C&-2\mathbb{L}^{g}_{(K,q,a)}
\end{pmatrix}
\end{eqnarray*}
where $A$, $B$, and $C$ are given in \Cref{DS}. Indeed, by \Cref{ext-pot} we have,  
\begin{eqnarray*}
\left.\dfrac{\partial}{\partial \phi}\right|_{0}\left(\Id-\Pi^{\mathbb{T}}_{(s,t,u,\phi)}\right){\rm S}(s,t,u,\phi).\dot{\phi}&=&D(\dot{\phi})-\left.\dfrac{\partial}{\partial \phi}\right|_{0}\Pi^{\mathbb{T}}_{(s,t,u,\phi)}{\rm S}(s,t,u,\phi)\\
&=&D(\dot{\phi})-(d{\rm S},d\dot{\phi})\\
&=&-2f^{2-q}(D^{-}d)^{\star}f^{q}(D^{-}d)\dot{\phi}.
\end{eqnarray*}

\begin{eqnarray*}
\left.\dfrac{\partial}{\partial t}\right|_{0}\left(\Id-\Pi^{\mathbb{T}}_{(s,t,u,\phi)}\right){\rm S}(s,t,u,\phi)&=&B-\left.\dfrac{\partial}{\partial t}\right|_{0}\Pi^{\mathbb{T}}_{(s,t,u,\phi)}{\rm S}(s,t,u,\phi)\\
&=&B-\Pi_{g}^{\mathbb{T}}B+\left(\Id-\Pi_{g}^{\mathbb{T}}\right)\left(\mathbb{G}(\alpha,dd^{c}{\rm S})\right)\\
&=&\left(\Id-\Pi^{\mathbb{T}}_{(g,a)}\right)\left(B+\mathbb{G}(\alpha,dd^{c}{\rm S})\right).
\end{eqnarray*}

\begin{eqnarray*}
\left.\dfrac{\partial}{\partial s}\right|_{0}\left(\Id-\Pi^{\mathbb{T}}_{(s,t,u,\phi)}\right){\rm S}(s,t,u,\phi)&=&A-\left.\dfrac{\partial}{\partial s}\right|_{0}\Pi^{\mathbb{T}}_{(s,t,u,\phi)}{\rm S}(s,t,u,\phi)\\
&=&A-\Pi_{g}^{\mathbb{T}}\left[f^{-1}\left(q{\rm S}_{q-1}+{\rm S}\right)\right].
\end{eqnarray*}

\begin{eqnarray*}
\left.\dfrac{\partial}{\partial u}\right|_{0}\left(\Id-\Pi^{\mathbb{T}}_{(s,t,u,\phi)}\right){\rm S}(s,t,u,\phi)&=&C-\left.\dfrac{\partial}{\partial s}\right|_{0}\Pi^{\mathbb{T}}_{(s,t,u,\phi)}{\rm S}(s,t,u,\phi)\\
&=&\left(\Id-\Pi_{g}^{\mathbb{T}}\right)C-\Pi_{g}^{\mathbb{T}}\left[2(q-2)f^{-1}f_{(H,\omega)}{\rm S}\right].
\end{eqnarray*}
The operator $\mathbb{L}^{g}_{(K,q,a)}$ is a formally $\langle\cdot,\cdot\rangle_{(g,q,a)}$-self-adjoint, $\mathbb{T}-$invariant, elliptic fourth-order differential operator and extends to a continuous linear operator,
\begin{equation*}
\mathbb{L}^{g}_{(K,q,a)}:L^{2}_{k+4}(M,\R)^{\mathbb{T},\perp}\rightarrow L^{2}_{k}(M,\R)^{\mathbb{T},\perp}
\end{equation*}
which is an isomorphism (since $\T$ is a maximal torus of ${\rm Aut}_{\rm red}(M,J)$). Thus 
\begin{equation*}
D\Psi_{0}:\mathbb{R}^{3}\times L^{2}_{k+4}(M,\R)^{\mathbb{T},\perp}\rightarrow \mathbb{R}^{3}\times L^{2}_{k}(M,\R)^{\mathbb{T},\perp}
\end{equation*}
is an isomorphisme. It follows from the inverse function theorem that $\Psi$ is an isomorphisme in a neighborhood $(-\epsilon,\epsilon)^{2}\times U $ of $0$. Using the Sobolev embbeding theorem, we can assume that the solution is of regularity at least $C^{4}$. We conclude using a similar bootstraping argument as in the case of extremal metrics \cite[Proposition 4]{LS}.
\end{proof}

\end{document}